\documentclass[11pt,letterpaper]{article}
\usepackage{graphicx} 
\usepackage[margin=1in]{geometry}


\usepackage{amsmath}
\usepackage{amssymb}
\usepackage{amsthm}
\usepackage{hyperref}
\usepackage{xcolor}

\usepackage{enumitem}
\setlist{leftmargin=6mm,nolistsep,noitemsep}

\usepackage[capitalize,noabbrev]{cleveref}
\crefname{question}{Question}{Questions}
\crefname{step}{Step}{Steps}
\crefname{claim}{Claim}{Claims}
\crefname{problem}{Problem}{Problems}
\crefname{definition}{Definition}{Definitions}
\crefname{observation}{Observation}{Observations}

\DeclareMathOperator{\conv}{conv}
\DeclareMathOperator{\proj}{proj}

\newcommand{\MP}{\text{MP}}
\newcommand{\QP}{\text{QP}}
\newcommand{\BQP}{\text{BQP}}
\newcommand{\PP}{\text{PP}}

\def\tw{{\rm tw}}
\def\poly{{\rm poly}}

\newcommand{\ie}{i.e., }
\renewcommand{\S}{\mathcal S}


\def\R{{\mathbb R}}

\def\Q{{\mathcal Q}}
\def\X{{\mathcal X}}

\newtheorem{theorem}{Theorem}
\newtheorem{corollary}{Corollary}
\newtheorem{proposition}{Proposition}

\newtheorem{lemma}{Lemma}

\theoremstyle{definition}
\newtheorem{example}{Example}

\theoremstyle{definition}
\newtheorem{observation}{Observation}


\def\P{{\mathcal P}}

\newcommand{\C}{\mathcal C}

\newcommand{\NP}{\mathcal {NP}}

\allowdisplaybreaks

\title{Tight semidefinite programming relaxations for sparse box-constrained quadratic programs}

\author{
Aida Khajavirad
\thanks{Department of Industrial and Systems Engineering,
             Lehigh University.
             E-mail: {\tt aida@lehigh.edu}.
             }
}

\begin{document}

\maketitle

\begin{abstract}
 We introduce a new class of semidefinite programming (SDP) relaxations for sparse box-constrained quadratic programs, obtained by a novel integration of the Reformulation Linearization Technique into standard SDP relaxations while explicitly exploiting the sparsity of the problem. The resulting relaxations are not implied by the existing LP and SDP relaxations for this class of optimization problems. We establish a sufficient condition under which the convex hull of the feasible region of the lifted quadratic program is SDP-representable; the proof is constructive and yields an explicit extended formulation. Although the resulting SDP may be of exponential size in general, we further identify additional structural conditions on the sparsity of the optimization problem that guarantee the existence of a polynomial-size SDP-representable formulation, which can be constructed in polynomial time.
\end{abstract}

\emph{Keywords:} nonconvex quadratic programming, sparsity, convex hull, Reformulation-Linearization Technique, semidefinite programming relaxation, polynomial-size extended formulation.

\section{Introduction}
\label{sec: intro}

We consider a nonconvex box-constrained quadratic program:
\begin{align}\label[problem]{pQP}\tag{QP}
\min \quad & x^\top Q x + c^\top x \\
{\rm s.t.} \quad & x \in [0,1]^n, \nonumber
\end{align}
where $c \in \R^n$ and $Q \in \R^{n\times n}$ is a symmetric matrix that is not positive semidefinite. Following a common practice in nonconvex optimization, we linearize the objective function of~\cref{pQP} by introducing new variables $Y := x x^\top$, thus, obtaining a reformulation of this problem in a lifted space of variables:
\begin{align}\label[problem]{lQP}\tag{$\ell$QP}
\min \quad & \langle Q,Y\rangle  + c^\top x \\
{\rm s.t.} \quad & Y = x x^\top \nonumber\\
& x \in [0,1]^n, \nonumber
\end{align}
where $\langle Q,Y\rangle$ denotes the matrix inner product. Given a set $\S$, we denote by $\conv(\S)$ the convex hull of the set $\S$. We then define
$$
\QP_n := \conv\Big\{(x, Y) \in \R^{n+\frac{n(n+1)}{2}}: Y = x x^\top, \; x \in [0,1]^n\Big\}.
$$
Constructing strong and cheaply computable convex relaxations for $\QP_n$ has been the subject of extensive research over the last three decades~\cite{SheTun95,YajFuj98,AnsBur10,BurLet09,Ans12,AnsPug25}. The most popular convex relaxations of Problem~\ref{lQP} are semidefinite programming (SDP) relaxations~\cite{Shor87}.
The key idea is to replace the nonconvex set defined by the constraint $Y=xx^\top$ by its convex hull defined by $Y\succeq xx^\top$, to obtain the following convex relaxation for $\QP_n$:
$$
\C_n^{\rm SDP} := \Big\{(x, Y) \in \R^{n+\frac{n(n+1)}{2}}:\begin{bmatrix}
1  & x^\top \\
x & Y
\end{bmatrix} \succeq 0, \; {\rm diag}(Y) \leq x, \; x \in [0,1]^n\Big\},
$$
where ${\rm diag}(Y)$ denotes the vector in $\R^n$ containing the diagonal entries of $Y$.  In~\cite{BurLet09}, the authors investigated some fundamental properties of $\QP_n$. In particular, they studied the relationship between $\QP_n$, and its well-known discrete counterpart; namely, the \emph{Boolean quadric polytope}~\cite{Pad89}:
$$
\BQP_n := \conv\Big\{(x, Y) \in \R^{n+\frac{n(n-1)}{2}}: Y_{ij} = x_i x_j, \; \forall 1 \leq i < j \leq n, \; x \in \{0,1\}^n\Big\}.
$$
The Boolean quadric polytope is a well-studied polytope in the context of binary quadratic programming, and the max-cut problem and its facial structure has been thoroughly investigated in the literature~\cite{Pad89,DezLau97}.
The authors of~\cite{BurLet09} obtained a sufficient condition under which a facet-defining inequality for $\BQP_n$ defines a facet of $\QP_n$ as well. Their result implies that the \emph{McCormick inequalities}~\cite{McC76} given by:
\begin{equation}\label{McCor}
    Y_{ij} \geq 0, \;\; Y_{ij} \geq x_i + x_j -1, \;\; Y_{ij} \leq x_i, \;\; Y_{ij} \leq x_j, \;\; \forall 1 \leq i < j \leq n,
\end{equation}
and the~\emph{triangle inequalities}~\cite{Pad89} given by:
\begin{eqnarray}\label{triineq}
    \begin{split}
        & Y_{ij} + Y_{ik} \leq x_i + Y_{jk} \\
        & Y_{ij} + Y_{jk} \leq x_j + Y_{ik} \\
        & Y_{ik} + Y_{jk} \leq x_k + Y_{ij} \\
        & x_i + x_j + x_k - Y_{ij} - Y_{ik} - Y_{jk} \leq 1
    \end{split} \quad \forall 1 \leq i < j < k \leq n,
\end{eqnarray}
define facets of $\QP_n$. We then define two stronger convex relaxations of $\QP_n$, the first one is obtained by adding McCormick inequalities to $\C^{\rm SDP}_n$:
\begin{equation}\label{SDPMC}
\C_n^{\rm SDP+MC} := \Big\{(x, Y) \in \R^{n+\frac{n(n+1)}{2}}: (x,Y) \in \C_n^{\rm SDP}, \; (x,Y)\; {\rm satisfy \; inequalities}~\eqref{McCor}\Big\},
\end{equation}
while the second one is obtained by adding triangle inequalities to $C_n^{\rm SDP+MC}$:
\begin{equation}\label{SDPMCTri}
\C_n^{\rm SDP+MC+Tri} := \Big\{(x, Y) \in \R^{n+\frac{n(n+1)}{2}}: (x,Y) \in \C_n^{\rm SDP}, \; (x,Y)\; {\rm satisfy \; inequalities}~\eqref{McCor}-\eqref{triineq}\Big\}.
\end{equation}
In~\cite{AnsBur10}, building upon the results in~\cite{Burer09}, the authors proved that if $n=2$, then $\QP_n= \C_n^{\rm SDP+MC}$. Moreover, they showed that if $n=3$, then $\QP_n \subsetneq \C_n^{\rm SDP+MC+Tri}$.
In fact, to date, obtaining an explicit algebraic description for $\QP_3$ remains an open question.  Since Problem~\ref{pQP} is $\NP$-hard in general, unless $\P=\NP$, one cannot construct in polynomial time a polynomial-size extended formulation for $\QP_n$. Recall that, given a convex set $\C \subseteq \mathbb{R}^n$, an \emph{extended formulation} for $\C$ is a convex set $\Q \subseteq \R^{n + r}$, for some $r > 0$, such that
$\C = \{ x \in \mathbb{R}^n \mid \exists y \in \mathbb{R}^r \text{ such that } (x, y) \in \Q \}$. It can be shown that if $\C$ admits a polynomial-size extended formulation that can be constructed in polynomial time, then optimizing a linear function over $\C$ can also be performed in polynomial time.
However, the key observation is that many quadratic programs are \emph{sparse}; \ie $q_{ij} = 0$ for many pairs $i,j$. Motivated by the same observation in the context of binary quadratic programming, Padberg~\cite{Pad89} introduced the Boolean quadric polytope of a sparse graph $G=(V,E)$:
$$
\BQP(G) := \conv\Big\{(x, Y) \in \{0,1\}^{V \cup E}: Y_{ij} = x_i x_j, \; \forall \{i,j\} \in E\Big\}.
$$
In~\cite{Laurent09,KolKou15,BieMun18}, the authors proved that for a graph $G=(V,E)$ with treewidth $\tw(G) = \kappa$, the polytope $\BQP(G)$ has a linear extended formulation with $O(2^{\kappa}|V|)$ variables and inequalities. This result implies that if $\kappa \in O(\log |V|)$, then $\BQP(G)$ admits a polynomial-size linear extended formulation.
Moreover, from~\cite{ChekChu16,AboFio19} it follows that the linear extension complexity of $\BQP(G)$ grows exponentially in the treewidth of $G$. Recall that given a polytope $\P$,
the \emph{linear extension complexity} of $\P$ is the minimum number of linear inequalities and equalities in a linear extended formulation of $\P$. Hence, a bounded treewidth for $G$ is a necessary and sufficient condition for the existence of a polynomial-size linear extended formulation for $\BQP(G)$.

Motivated by these ground-breaking results for  sparse binary quadratic programs, in~\cite{DeyIda25}, the authors considered sparse box-constrained quadratic programs. They then introduced a graph representation for this class of optimization problems, which we define next. Consider a graph $G=(V,E, L)$,
where $V$ denotes the node set of $G$, $E$ denotes the edge set of $G$ in which each $\{i,j\} \in E$ is an edge connecting two distinct nodes $i,j \in V$, and $L$ denotes the loop set of $G$ in which each $\{i,i\} \in L$ is a loop connecting some node $i \in V$ to itself.
We then associate a graph $G$ with Problem~\ref{lQP} (or with Problem~\ref{pQP}), where we define a node $i$ for each independent variable $x_i$, for all $i\in [n]:=\{1,\cdots,n\}$, two distinct nodes $i$ and $j$ are adjacent if the coefficient $q_{ij}$ is nonzero, and there is a loop $\{i,i\}$ for some $i \in [n]$, if the coefficient $q_{ii}$ is nonzero. Moreover, we say that a loop $\{i,i\}$  is a \emph{plus loop}, if $q_{ii} > 0$ and is a \emph{minus loop}, if $q_{ii} < 0$. We denote the set of plus loops and minis loop by $L^+$ and $L^-$, respectively.
Henceforth, for notational simplicity, we denote variables $x_i$ by $z_i$, for all $i \in [n]$, and we denote variables $Y_{ij}$ by $z_{ij}$, for all $\{i,j\} \in E \cup L$. Similarly to~\cite{DeyIda25}, we consider the following reformulation of Problem~\ref{lQP}:
\begin{align}\label[problem]{lQPG}
\tag{$\ell$QPG}
\min \quad & \sum_{\{i,i\} \in L}{q_{ii} z_{ii}}+2 \sum_{\{i,j\} \in E}{q_{ij} z_{ij}}  + \sum_{i\in V}{c_i z_i} \\
{\rm s.t.} \quad & z_{ii} \geq z^2_i, \; \forall \{i, i\} \in L^+\nonumber\\
& z_{ii} \leq z^2_i, \; \forall \{i, i\} \in L^- \nonumber\\
& z_{ij} = z_i z_j, \; \forall \{i,j\} \in E\nonumber\\
& z_i \in [0,1], \; \forall i \in V. \nonumber
\end{align}
Furthermore, we define:
\begin{align*}
\QP(G) := \conv\Big\{z \in \R^{V\cup E \cup L}: \; & z_{ii} \geq z^2_i, \; \forall \{i, i\} \in L^+, \; z_{ii} \leq z^2_i, \; \forall \{i, i\} \in L^-, \; z_{ij} = z_i z_j, \\
&\forall \{i,j\} \in E, \;  z_i \in [0,1], \forall i \in V\Big\}.
\end{align*}
In~\cite{DeyIda25}, the authors proposed a new second-order cone (SOC) representable relaxation for $\QP(G)$. They then examined the tightness of the proposed relaxation; namely, they proved that if the set of nodes of $G$ with plus loops forms a stable set of $G$, then $\QP(G)$ is SOC-representable. While the proposed SOC relaxation may be of exponential size in general, they proved if $G$ admits a tree decomposition whose width is bounded and the \emph{spreads} of nodes with plus loops are also bounded, then the proposed SOC formulation is polynomial size.

In this paper, we introduce a new class of SDP relaxations for box-constrained quadratic programs. Our proposed relaxations are obtained via a novel incorporation of the Reformulation-Linearization Technique (RLT)~\cite{SheAda90} into the existing SDP relaxations and are provably stronger than the existing LP and SDP relaxations. The main contributions of this paper are as follows:
\vspace{0.1in}
\begin{itemize}[leftmargin=1.0cm]

    \item [(i)] We introduce a new type of SDP relaxations for box-constrained quadratic programs by combining the existing RLT framework and SDP relaxations, while exploiting the sparsity of the problem to control the size of the resulting relaxation.
    We prove that the SOC relaxation of~\cite{DeyIda25} is implied by the proposed SDP relaxation. We further show that the proposed relaxations can be interpreted as a special case of Schm\"udgen hierarchy, where the multipliers and factors have a specific structure.

    \item [(ii)] We examine the tightness of the proposed SDP relaxation, and obtain a sufficient condition under which $\QP(G)$ is SDP-representable. Namely, we prove that if $G$ does not have any three nodes with plus loops such that at least two pairs among them are adjacent, then $\QP(G)$ is SDP-representable. Our proof is constructive and yields a simple algorithm for explicitly constructing the  corresponding SDP formulation.

    \item [(iii)] In general, the proposed SDP formulation may be of exponential size.  We then identify sufficient conditions based on the structure of graph $G$, under which $\QP(G)$ admits a polynomial-size SDP-representable formulation that can be constructed in polynomial time. Our sufficient condition essentially states that if the treewidth of $G$ and the degrees of nodes with plus loops are both bounded, then the SDP formulation is guaranteed to be of polynomial size.
\end{itemize}

\medskip

 Our theoretical results significantly generalize those of~\cite{DeyIda25}. However, this generalization comes at a cost: the proposed SDP relaxations are computationally more expensive to solve than the SOC relaxations in~\cite{DeyIda25}.

\medskip

The remainder of the paper is structured as follows. In~\cref{sec: prelim}, we review the preliminary material that we need to develop our new relaxations. In~\cref{sec: newSDPs}, we introduce a new class of SDP relaxations for box-constrained quadratic programs. In~\cref{sec: convexhull}, we examine the tightness of the proposed SDP relaxations and obtain a sufficient condition for the SDP-representability of $\QP(G)$. In~\cref{sec: polysize}, we derive sufficient conditions under which $\QP(G)$ admits a polynomial-size SDP-representable formulation that can be constructed in polynomial time. Finally, in~\cref{sos}, we detail on the connections between our proposed relaxations and the existing SDP hierarchies.

\section{Preliminaries}
\label{sec: prelim}
In this section, we review the preliminary material that we will use to develop and analyze our proposed SDP relaxations in the subsequent sections.

\subsection{Higher-order extended formulations and decomposability of $\QP(G)$}
In this paper, we propose new SDP relaxations for sparse box-constrained quadratic programs via a novel incorporation of the RLT framework into existing SDP relaxations. The proposed relaxations lie in an extended space where some extended variables represent products of more than two original variables. In this section, we present the necessary background to construct these extended formulations.

A \emph{hypergraph} $G$ is a pair $(V,E)$, where $V$ is a finite set of nodes and $E$ is a set of subsets of $V$ of cardinality at least two, called the edges of $G$. In this paper, we consider hypergraphs with loops; \ie $G=(V,E,L)$, where $V, E$ are the same as those of ordinary hypergraphs and $L$ denotes the set of loops of $G$. As for graphs with loops, we partition the loop set of hypergraphs as $L = L^- \cup L^+$, where $L^-$ and $ L^+$ denote the sets of minus loops and plus loops of $G$, respectively. With any hypergraph $G= (V,E,L)$, we associate the convex set $\PP(G)$ defined as
\begin{align} \label{extset}
 \PP(G):= \conv\Big\{ z \in \R^{V \cup E \cup L} : \; & z_{ii} \geq z^2_i, \; \forall \{i, i\} \in L^+, \; z_{ii} \leq z^2_i, \; \forall \{i, i\} \in L^-,\; z_e = \prod_{i \in e} {z_i}, \; \forall e \in E, \nonumber\\
 & z_i \in [0,1], \; \forall i \in V \Big\}.
\end{align}
In the special case where $G$ is a graph, $\PP(G)$ simplifies to $\QP(G)$. Moreover, if the hypergraph $G$ does not have any loops, then $\PP(G)$ coincides with the well-studied multilinear polytope $\MP(G)$ in the context of binary polynomial optimization~\cite{dPKha17MOR,dPKha18SIOPT,dPKha23mMPA}. In the next four lemmata, we review some basic properties of $\PP(G)$ that we will later use to prove our results. These results were originally proved in~\cite{DeyIda25}.
\begin{lemma}[lemma 2 and lemma 3 in~\cite{DeyIda25}]\label{recCone}
    Let $G=(V,E,L)$ be a hypergraph and consider the convex set $\PP(G)$ as defined by~\eqref{extset}. Then:
    \medskip
    \begin{itemize}[leftmargin=1.0cm]
        \item [(i)] $\PP(G)$ is a closed set.
        \item [(ii)] The set of extreme points of $\PP(G)$ is given by:
        \begin{align*}
        {\rm ext}(\PP(G)) = \Big\{z \in \R^{V\cup E \cup L}:  &z_i \in [0,1],  \forall i \in V: \{i,i\} \in L^+, z_i \in \{0,1\},  \forall i \in V: \{i,i\} \notin L^+, \\
        & z_{p} = \prod_{i \in p}{z_i},  \forall p \in E \cup L \Big\}, \nonumber
    \end{align*}
        \item [(iii)] The recession code of $\PP(G)$ is given by:
        \begin{align*}
             {\rm rec}(\PP(G)) =\Big\{z \in \R^{V \cup E \cup L} : & \; z_{ii} \geq 0,  \forall \{i,i\} \in L^+,  z_{ii} \leq 0,  \forall \{i,i\} \in L^-, z_p = 0, \forall p \in V \cup E\Big\}.
\end{align*}
    \end{itemize}
\end{lemma}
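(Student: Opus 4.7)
My plan is to dispatch the three parts in the order (iii), (i), (ii), since the recession-cone description obtained in (iii) feeds directly into a Minkowski-sum decomposition of $\PP(G)$ that simultaneously proves closedness in (i) and structures the extreme-point classification in (ii).

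For part (iii), I would verify both inclusions directly from the definition. Let $\set$ denote the non-convex set whose convex hull defines $\PP(G)$, and let $C$ denote the cone claimed in the statement. For any $d \in C$, $z \in \set$, and $t \geq 0$, one checks that $z + td \in \set$: the coordinates in $V \cup E$ are unchanged, and each loop coordinate shifts in the direction consistent with its one-sided parabola constraint. Hence $C \subseteq \mathrm{rec}(\PP(G))$. Conversely, if $d \in \mathrm{rec}(\PP(G))$, then boundedness of $z_i$ in $[0,1]$ and of each $z_e = \prod_{i \in e} z_i$ in $[0,1]$ forces $d_i = d_e = 0$, while the signs of $d_{ii}$ are pinned down by the corresponding one-sided loop constraint.

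For part (i), I would define the continuous map $\phi:[0,1]^V \to \R^{V \cup E \cup L}$ by $\phi(x)_i = x_i$, $\phi(x)_e = \prod_{i \in e} x_i$, and $\phi(x)_{ii} = x_i^2$, and set $B := \phi([0,1]^V)$. Then $B$ is compact, and a direct check shows $\set = B + C$, since every element of $\set$ is obtained from some $\phi(x) \in B$ by applying one-sided shifts in the loop coordinates. Taking convex hulls and using convexity of $C$ yields
\[
\PP(G) = \conv(\set) = \conv(B + C) = \conv(B) + C.
\]
Since $\conv(B)$ is compact and $C$ is closed, the Minkowski sum is closed, which establishes (i).

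For part (ii), I would prove both inclusions. For $(\Leftarrow)$, take $z = \phi(x)$ with $x_i \in [0,1]$ when $\{i,i\} \in L^+$ and $x_i \in \{0,1\}$ otherwise, and suppose $z = \tfrac{1}{2}(w + u)$ with $w, u \in \PP(G)$. When $x_i \in \{0,1\}$, the identity $\tfrac{1}{2}(w_i + u_i) = x_i$ with $w_i, u_i \in [0,1]$ immediately forces $w_i = u_i = x_i$. When $\{i,i\} \in L^+$, strict convexity of $t \mapsto t^2$ applied to the chain
\[
x_i^2 = z_{ii} = \tfrac{1}{2}(w_{ii} + u_{ii}) \geq \tfrac{1}{2}(w_i^2 + u_i^2) \geq \left(\frac{w_i + u_i}{2}\right)^{\!2} = x_i^2
\]
forces $w_i = u_i = x_i$ and $w_{ii} = u_{ii} = x_i^2$. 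Using the decomposition $w = \sum_k \lambda_k b_k + c$ supplied by (i), the same strict-convexity argument applied at the level of the $b_k$ pins down $(b_k)_i = x_i$ for every $k$ with $\lambda_k > 0$; the identity $(b_k)_e = \prod_{i \in e} (b_k)_i$ on $B$ then yields $w_e = \prod_{i \in e} x_i = z_e$, and similarly $w_{ii} = x_i^2 = z_{ii}$ for $\{i,i\} \in L^-$, so $w = z = u$. For $(\Rightarrow)$, any $z \in \PP(G)$ violating the stated conditions admits a nontrivial midpoint decomposition: a strict excess $z_{ii} > z_i^2$ for some $\{i,i\} \in L^+$ is the midpoint of $z \pm \epsilon e_{ii}$, while a value $z_i \in (0,1)$ for some $i$ with $\{i,i\} \notin L^+$ projects to a non-extreme point of the corresponding 2D slice of $\PP(G)$ and lifts to a nontrivial convex combination. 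The main obstacle is the edge propagation step in the $(\Leftarrow)$ direction: after pinning $w_i = x_i$ for every $i \in V$, one still needs the decomposition from (i) together with strict convexity on each plus-loop slice to conclude $(b_k)_i = x_i$ for every $k$; only then do the multilinear identities defining $B$ propagate to the desired edge equalities $w_e = z_e$, and this interplay between the one-parameter family of extreme points supplied by each plus-loop slice and the multilinear edge relations is the technical heart of the argument.
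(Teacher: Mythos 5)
The paper itself offers no proof of this lemma: it is imported verbatim from \cite{DeyIda25} (its lemmas~2 and~3), so there is no in-paper argument to compare against; I can only assess your proposal on its own terms. Your backbone decomposition $\PP(G)=\conv(B)+C$ with $B=\phi([0,1]^V)$ compact and $C$ the claimed cone is sound, and it correctly delivers part~(iii), part~(i), and the $(\Leftarrow)$ inclusion of part~(ii): the Jensen/strict-convexity step pinning $(b_k)_i=x_i$ on the support is valid because $z_{ii}\geq z_i^2$ is a convex constraint and hence survives taking convex hulls, and the propagation to edges and minus loops then goes through.

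The genuine gap is in the $(\Rightarrow)$ direction of part~(ii). Your first case --- that a strict excess $z_{ii}>z_i^2$ at a plus loop exhibits $z$ as the midpoint of $z\pm\epsilon e_{ii}$ --- is false, because $z-\epsilon e_{ii}$ need not lie in $\PP(G)$. Take $V=\{1,2\}$, $E=\{\{1,2\}\}$, $L=L^+=\{\{1,1\}\}$ and $z=(z_1,z_2,z_{12},z_{11})=(\tfrac12,\tfrac12,\tfrac12,\tfrac12)=\tfrac12\phi(0,0)+\tfrac12\phi(1,1)$. Here $z_{11}=\tfrac12>\tfrac14=z_1^2$, yet for any point with first three coordinates $(\tfrac12,\tfrac12,\tfrac12)$, every representation $\sum_k\lambda_k\phi(x^{(k)})+c$ satisfies $\sum_k\lambda_k x_1^{(k)}(1-x_2^{(k)})=\sum_k\lambda_k x_2^{(k)}(1-x_1^{(k)})=0$, forcing each $x^{(k)}\in\{(0,0),(1,1)\}$ and hence $z_{11}\geq\sum_k\lambda_k\bigl(x_1^{(k)}\bigr)^2=\tfrac12$; so $z-\epsilon e_{11}\notin\PP(G)$ for every $\epsilon>0$. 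Your second case (``projects to a non-extreme point of a 2D slice and lifts'') is not a valid inference as stated, since extreme points routinely project to non-extreme points; and two violation modes are not addressed at all, namely $z_{ii}<z_i^2$ at a minus loop and $z_e\neq\prod_{i\in e}z_i$ at an edge. The repair stays inside your framework but reverses the order of quantifiers: assume $z$ is extreme, note that $z=\sum_k\lambda_k b_k+c$ writes $z$ as a convex combination of the points $b_k+c\in\PP(G)$, so all $b_k$ equal a single $\phi(x)$, and $c\neq0$ would make $z$ the midpoint of the distinct points $\phi(x)$ and $\phi(x)+2c$; hence $z=\phi(x)$, which settles all edge and loop identities at once. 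If moreover some node $i$ without a plus loop had $x_i\in(0,1)$, then $z=\tfrac12\bigl(\phi(x+\epsilon e_i)-\epsilon^2 e_{ii}\bigr)+\tfrac12\bigl(\phi(x-\epsilon e_i)-\epsilon^2 e_{ii}\bigr)$ when $i$ carries a minus loop (using $-e_{ii}\in C$), and $z=\tfrac12\phi(x+\epsilon e_i)+\tfrac12\phi(x-\epsilon e_i)$ when $i$ carries no loop, contradicting extremality. This ordering avoids the false midpoint claim entirely.
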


Now, let $G=(V,E,L)$ be a graph and let $G'=(V,E',L)$ be a hypergraph such that $E \subseteq E'$. The next lemma indicates that a formulation for $\PP(G')$ serves as an extended formulation for $\QP(G)$.

\begin{lemma}[lemma~4 in~\cite{DeyIda25}]\label{obsxxx}
      Consider two hypergraphs $G_1=(V,E_1, L)$ and $G_2 = (V, E_2, L)$ such that $E_1 \subseteq E_2$. Then a formulation for $\PP(G_2)$ is an extended formulation for $\PP(G_1)$.
\end{lemma}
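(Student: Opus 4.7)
The plan is to establish the set equality
\[
\PP(G_1) \;=\; \proj_{V \cup E_1 \cup L}\bigl(\PP(G_2)\bigr),
\]
which is exactly the definition of $\PP(G_2)$ being an extended formulation of $\PP(G_1)$. I would prove the two inclusions separately, and both should follow by a direct convex-combination argument using the definition~\eqref{extset}; the extreme-point and recession-cone descriptions in~\cref{recCone} are available if needed, but should not be required.

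For the inclusion $\proj_{V \cup E_1 \cup L}(\PP(G_2)) \subseteq \PP(G_1)$, I would take an arbitrary $z' \in \PP(G_2)$ and write it, using the definition of convex hull, as a finite convex combination $z' = \sum_k \lambda_k \bar z^{(k)}$ where each $\bar z^{(k)}$ satisfies the defining constraints of $\PP(G_2)$: the plus/minus loop inequalities, the product equations $\bar z^{(k)}_e = \prod_{i\in e}\bar z^{(k)}_i$ for every $e\in E_2$, and the box constraints on $V$. Because $E_1 \subseteq E_2$, the projection $\hat z^{(k)}$ of each $\bar z^{(k)}$ onto the coordinates $V\cup E_1\cup L$ still satisfies all the defining constraints of $\PP(G_1)$, so $\hat z^{(k)}$ lies in the set whose convex hull is $\PP(G_1)$. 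Linearity of projection then gives $\proj_{V\cup E_1\cup L}(z') = \sum_k \lambda_k \hat z^{(k)} \in \PP(G_1)$.

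For the reverse inclusion $\PP(G_1) \subseteq \proj_{V \cup E_1 \cup L}(\PP(G_2))$, I would take any $z \in \PP(G_1)$ and again write $z = \sum_k \lambda_k \bar z^{(k)}$ with each $\bar z^{(k)}$ in the defining set of $\PP(G_1)$. For each $\bar z^{(k)}$ I would construct a lift $\tilde z^{(k)} \in \R^{V \cup E_2 \cup L}$ by copying the $V\cup E_1\cup L$ coordinates of $\bar z^{(k)}$ and setting $\tilde z^{(k)}_e := \prod_{i\in e}\bar z^{(k)}_i$ for every $e \in E_2 \setminus E_1$. The point $\tilde z^{(k)}$ inherits the loop inequalities and box constraints from $\bar z^{(k)}$, satisfies the product equations for edges in $E_1$ by assumption, and satisfies them for edges in $E_2\setminus E_1$ by construction, so $\tilde z^{(k)}$ lies in the defining set of $\PP(G_2)$. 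Hence $\sum_k \lambda_k \tilde z^{(k)} \in \PP(G_2)$, and its projection onto $V\cup E_1\cup L$ equals $z$.

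There is no substantive obstacle here: the statement is essentially a bookkeeping claim about projections of convex hulls, with the key structural input being that adding hyperedges to the hypergraph only imposes \emph{additional} product equations, which can always be satisfied on the defining set by choosing the new extended variables to equal the corresponding monomials. The only minor point to verify carefully is that projection commutes with finite convex combinations, which is immediate from linearity.
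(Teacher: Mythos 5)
Your proof is correct. Note that the paper does not prove this lemma itself --- it imports it as lemma~4 of~\cite{DeyIda25} without proof --- so there is no in-paper argument to compare against; your two-inclusion argument (project a convex combination of points of the defining set for one direction, lift by setting $\tilde z_e := \prod_{i\in e}\bar z_i$ for $e \in E_2\setminus E_1$ for the other) is the natural and standard one, and all the steps, including the use of finite convex combinations from the definition of $\conv$ and the linearity of projection, check out.
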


The next result implies that, to construct $\QP(G)$, one can essentially ignore the minus loops.

\begin{lemma}[lemma~5 in~\cite{DeyIda25}]\label{minusloops}
Consider a hypergraph $G=(V,E, L)$, where $L = L^- \cup L^+$. Define $G' = (V, E, L^+)$. Then a formulation for $\PP(G)$ is obtained by putting together a formulation for $\PP(G')$ together with the following linear inequalities:
\begin{equation}\label{harmless}
    z_{ii} \leq z_i, \; z_i \in [0,1], \; \forall \{i, i\} \in L^-.
\end{equation}
\end{lemma}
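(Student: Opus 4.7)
The plan is to prove the equality $\PP(G) = Q$ by two inclusions, where $Q$ denotes the set obtained by combining any valid formulation of $\PP(G')$ with the inequalities in~\eqref{harmless} for all $\{i,i\} \in L^-$. The main tool will be the extreme point description from \cref{recCone}~(ii) and the recession cone description from \cref{recCone}~(iii). The guiding observation is that for $z_i \in \{0,1\}$, we have $z_i^2 = z_i$, so the nonconvex constraint $z_{ii} \leq z_i^2$ collapses to the linear constraint $z_{ii} \leq z_i$ on the vertex set of the cube.

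For the easy inclusion $\PP(G) \subseteq Q$, I would start from any extreme point $v$ of $\PP(G)$. By \cref{recCone}~(ii), nodes $i$ with $\{i,i\} \in L^-$ satisfy $\{i,i\} \notin L^+$, so $v_i \in \{0,1\}$ and $v_{ii} = v_i^2 = v_i$; in particular $v_{ii} \leq v_i$ and $v_i \in [0,1]$ hold at every extreme point. The inequality $z_{ii} \leq z_i$ is linear and is also preserved by every recession direction in \cref{recCone}~(iii) (since for $\{i,i\} \in L^-$ the recession cone requires $r_{ii} \leq 0$ and $r_i = 0$). Therefore $z_{ii} \leq z_i$ holds throughout $\PP(G)$. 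Moreover, projecting $\PP(G)$ onto the variables indexed by $V \cup E \cup L^+$ yields a subset of $\PP(G')$, so $z$ satisfies any valid formulation of $\PP(G')$ in those coordinates.

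For the harder inclusion $Q \subseteq \PP(G)$, take $z \in Q$ and let $z'$ denote its projection onto the $V \cup E \cup L^+$ coordinates, so $z' \in \PP(G')$. Using \cref{recCone} for $G'$, write $z' = \sum_k \lambda_k v^k + r'$, where $v^k$ are extreme points of $\PP(G')$, $\lambda_k \geq 0$, $\sum_k \lambda_k = 1$, and $r' \in {\rm rec}(\PP(G'))$. Extend each $v^k$ to a vector $\tilde v^k \in \R^{V \cup E \cup L}$ by defining $\tilde v^k_{ii} := v^k_i$ for every $\{i,i\} \in L^-$; since $v^k_i \in \{0,1\}$ at these extreme points, we have $\tilde v^k_{ii} = (v^k_i)^2$, so $\tilde v^k$ is an extreme point of $\PP(G)$ by \cref{recCone}~(ii). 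Extend $r'$ to a vector $r \in \R^{V \cup E \cup L}$ by setting $r_{ii} := z_{ii} - z_i$ for every $\{i,i\} \in L^-$.

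The remaining task is a direct verification. Since $r'_i = 0$ for all $i \in V$, the projection gives $z_i = \sum_k \lambda_k v^k_i$, and so the inequality $z_{ii} \leq z_i$ from $Q$ translates into $r_{ii} \leq 0$; hence $r \in {\rm rec}(\PP(G))$ by \cref{recCone}~(iii). A coordinate-by-coordinate check then shows $z = \sum_k \lambda_k \tilde v^k + r$, proving $z \in \PP(G)$. The main obstacle, if any, is conceptual rather than technical: one must recognize that the only role of the minus-loop constraint $z_{ii} \leq z_i^2$ is to cut off values above $z_i^2$, which at vertices of $[0,1]^V$ already coincides with the linear bound $z_{ii} \leq z_i$; the unboundedness below is absorbed entirely by the recession cone, which is why no additional facet is needed beyond \eqref{harmless}.
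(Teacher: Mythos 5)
The paper does not prove this lemma at all: it is imported verbatim as lemma~5 of~\cite{DeyIda25}, so there is no in-paper argument to compare yours against. Judged on its own, your proof is correct and self-contained. The forward inclusion is fine (indeed it can be shortened: $z_{ii}\le z_i^2\le z_i$ holds on the generating set of $\PP(G)$ because $z_i\in[0,1]$, and the inequality is linear, so it survives the convex hull; your route through extreme points and recession directions reaches the same conclusion). The reverse inclusion is the substantive part, and your lifting is sound: minus-loop nodes carry no loop in $G'$, hence $v^k_i\in\{0,1\}$ at extreme points of $\PP(G')$ by \cref{recCone}~(ii), so setting $\tilde v^k_{ii}:=v^k_i=(v^k_i)^2$ produces extreme points of $\PP(G)$; the residual $r_{ii}:=z_{ii}-z_i\le 0$ lands in ${\rm rec}(\PP(G))$ by \cref{recCone}~(iii) precisely because of~\eqref{harmless}; and the coordinate check uses $r'_i=0$ correctly. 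The one step you use without comment is the representation $\PP(G')=\conv({\rm ext}(\PP(G')))+{\rm rec}(\PP(G'))$ with a \emph{finite} convex combination; this is the Minkowski--Klee theorem for closed, line-free convex sets, and it applies here since \cref{recCone}~(i) gives closedness and part~(iii) shows the recession cone is pointed. Citing that explicitly would close the only gap worth mentioning.
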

We say that a hypergraph $G=(V,E,L)$ is a \emph{complete} hypergraph if $E$ contains all subsets of $V$ of cardinality at least two.
Given a hypergraph $G=(V,E, L)$, and $V' \subseteq V$, the \emph{section hypergraph} of $G$ induced by $V'$ is the hypergraph $G'=(V',E', L')$, where $E' = \{e \in E : e \subseteq V'\}$ and $L'=\{\{i,i\} \in L: i \in V'\}$. Moreover, if a loop is a plus (resp. minus) loop in $G$, it is also a plus (resp. minus) loop in $G'$. Given hypergraphs $G_1=(V_1,E_1,L_1)$ and $G_2=(V_2,E_2,L_2)$, we denote by $G_1 \cap G_2$ the hypergraph $(V_1 \cap V_2, E_1 \cap E_2, L_1 \cap L_2)$,
and we denote by $G_1 \cup G_2$ the hypergraph $(V_1 \cup V_2, E_1 \cup E_2, L_1 \cup L_2)$.
In the following, we consider a hypergraph $G$, and two distinct section hypergraphs of $G$, denoted by $G_1$ and $G_2$, such that $G_1 \cup G_2 = G$. We say that $\PP(G)$ is \emph{decomposable} into $\PP(G_1)$ and $\PP(G_2)$ if the system comprising a description of $\PP(G_1)$ and a description of $\PP(G_2)$ is a description of $\PP(G)$.
The next proposition provides a sufficient condition for the decomposability of $\PP(G)$. This result is a consequence of a result in~\cite{dPKha18MPA} regarding the decomposability of the multilinear polytope (see theorem~1 in~\cite{dPKha18MPA}).

\begin{lemma}[corollary~1 in~\cite{DeyIda25}]\label{cor: decomp}
Let $G=(V,E,L)$ be a hypergraph.
Let $G_1$,$G_2$ be section hypergraphs of $G$ such that $G_1 \cup G_2 = G$ and $G_1 \cap G_2$ is a complete hypergraph. Suppose that the hypergraph $G_1 \cap G_2$ has no plus loops.
Then, $\PP(G)$ is decomposable into $\PP(G_1)$ and $\PP(G_2)$.
\end{lemma}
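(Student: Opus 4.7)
My plan is to follow the polytope-gluing-along-a-complete-overlap strategy, mimicking the known decomposability result for the pure multilinear polytope and adapting it to handle plus and minus loops. First, using Lemma~3 I would strip the minus loops: each $\{i,i\} \in L^-$ contributes only the node-local inequalities $z_{ii} \leq z_i$ and $z_i \in [0,1]$, which do not couple across the decomposition, so it suffices to prove the statement for the plus-loop-only versions of $G$, $G_1$, $G_2$, and in that reduced instance $G_1 \cap G_2$ has no loops at all.

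Next, I take an arbitrary $z$ satisfying the systems defining $\PP(G_1)$ and $\PP(G_2)$, set $z^{(k)} := \proj_{V_k \cup E_k \cup L_k}(z) \in \PP(G_k)$, and use Lemma~1 (closedness, explicit extreme points, recession cone) to represent each $z^{(k)}$ as a mixture $\int v \, d\mu_k(v)$ over extreme points of $\PP(G_k)$ plus a recession vector supported on the plus-loop coordinates of $G_k$. By Lemma~1(ii), every such extreme point is $\{0,1\}$-valued on nodes without plus loops; since $G_1 \cap G_2$ has no plus loops, each extreme point restricted to $V_1 \cap V_2$ is a vertex of the cube $\{0,1\}^{V_1 \cap V_2}$.

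The heart of the argument is to show that $\mu_1$ and $\mu_2$ induce the same pushforward measure on this binary cube. For each $b \in \{0,1\}^{V_1 \cap V_2}$, let $\alpha_b^k$ denote the $\mu_k$-mass of extreme points whose restriction to $V_1 \cap V_2$ equals $b$. Because $G_1 \cap G_2$ is complete, the projection of $z$ onto the overlap includes $z_e$ for every $e \subseteq V_1 \cap V_2$ with $|e| \geq 1$, and on each extreme point these coordinates equal $\prod_{i \in e} b_i$; matching moments on the two sides yields $\sum_b \alpha_b^k \prod_{i \in e} b_i = z_e$. Since the monomials $b \mapsto \prod_{i \in e} b_i$ indexed by $e \subseteq V_1 \cap V_2$ form a basis of the real-valued functions on $\{0,1\}^{V_1 \cap V_2}$, this linear system uniquely determines $\alpha_b^1 = \alpha_b^2 =: \alpha_b$ for every $b$.

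Finally, I would glue: for each $b$ with $\alpha_b > 0$, the conditional measures on the private sides $V_1 \setminus V_2$ and $V_2 \setminus V_1$ are well-defined, and with the overlap fixed at $b$ their product pushes forward to an extreme-point measure on $\PP(G)$; integrating over $b$ with weights $\alpha_b$ and adding back the two recession rays, whose supports in $L$ are disjoint because the plus-loop set of $G_1 \cap G_2$ is empty, recovers $z$ as a point of $\PP(G)$. The main obstacle I expect is the moment-matching step, where the completeness of $G_1 \cap G_2$ is indispensable; a secondary subtlety is the bookkeeping for the continuous plus-loop coordinates on the private sides, which requires a measure-theoretic rather than finite-convex-combination version of the extreme-point representation guaranteed by Lemma~1.
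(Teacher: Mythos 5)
The paper does not actually prove this lemma: it is imported verbatim as corollary~1 of~\cite{DeyIda25}, which in turn derives it from the decomposition theorem for the multilinear polytope (theorem~1 of~\cite{dPKha18MPA}). Your argument is a correct reconstruction of exactly that standard proof --- strip the minus loops, write each projection as a convex combination of extreme points plus a recession vector, use completeness of $G_1\cap G_2$ to match the moment vectors $\bigl(\sum_b \alpha_b^k \prod_{i\in e} b_i\bigr)_{e\subseteq V_1\cap V_2}$ (the multilinear monomials form a basis of functions on $\{0,1\}^{V_1\cap V_2}$, so the overlap distributions coincide), and then glue the conditional distributions on the private sides. The two loop-specific adaptations are handled correctly: extreme points are binary on the overlap precisely because the overlap has no plus loops, and the recession vectors of $\PP(G_1)$ and $\PP(G_2)$ are supported on disjoint loop coordinates, so they can simply be added at the end. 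Two small polish items: you should also record the easy inclusion (a point of $\PP(G)$ satisfies both subsystems, which is immediate because every constraint of the combined system is valid on the generating set of $\PP(G)$), and the measure-theoretic caution at the end is unnecessary --- the extreme-point set in \cref{recCone}(ii) is compact, so Carath\'eodory gives finite convex combinations throughout.
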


Consider the graph $G=(V,E,L)$. By~\cref{minusloops}, if $G$ has no plus loops, then $\QP(G)$ is a polyhedron. Now suppose that $L^+\neq \emptyset$. Define
\begin{equation}\label{vplus}
V^+:=\{i \in V: \{i,i\} \in L^+\}.
\end{equation}
Denote by $G_{V^+}$ the subgraph of $G$ induced by $V^+$ and denote by $V_c$, $c \in \C$ the connected components of $G_{V^+}$.
The next result indicates that there exists an extended formulation for $\QP(G)$ whose structure depends on the ``size'' of the connected components $V_c$, $c \in \C$.

\begin{proposition}\label{naiveDecomp}
    Let $G=(V,E,L)$ be a graph. Denote by $V_c$, $c \in \C$ the connected components of $G_{V^+}$, where $G_{V^+}$ is the subgraph of $G$ induced by $V^+$ defined by~\eqref{vplus}.
    For each $c \in \C$, define
    \begin{equation}\label{nghood}
N(V_{c}):=\{i \in V \setminus V_c : \{i,j\} \in E, \; {\rm for \; some} \; j \in V_c\}.
\end{equation}
    Then  $\QP(G)$ admits an extended formulation obtained by putting together formulations for $\PP(G'_c)$, $c \in \C \cup \{0\}$, where $G'_c$, $c \in \C$ is a hypergraph with $|V_c|+|N(V_c)|$ nodes and $|V_c|$ plus loops, and $G'_0$ is a hypergraph with $|V|-\sum_{c \in \C}{|V_c|}$ nodes and no plus loops.
\end{proposition}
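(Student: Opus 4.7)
The plan is to apply the decomposition result of \cref{cor: decomp} iteratively, peeling off each connected component $V_c$ of $G_{V^+}$ together with its neighborhood $N(V_c)$. First, by \cref{minusloops} I may set the minus loops aside and assume $L = L^+$ throughout. Second, to enable \cref{cor: decomp} I need each intersection hypergraph to be complete, so I augment $G$ into a hypergraph $\tilde{G}$ by adding, for every $c \in \C$, all subsets of $N(V_c)$ of cardinality at least two as hyperedges (the original edges and plus loops of $G$ are retained). By \cref{obsxxx}, any formulation of $\PP(\tilde{G})$ is automatically an extended formulation of $\QP(G)$, so it suffices to construct one for $\PP(\tilde{G})$.

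The central observation enabling the decomposition is that $N(V_c) \cap V^+ = \emptyset$ for every $c \in \C$: if some $i \in N(V_c) \cap V^+$ existed, then $i$ would be adjacent in $G_{V^+}$ to a node of $V_c$, placing $i$ in the same component of $G_{V^+}$ as $V_c$ and contradicting $i \notin V_c$. Consequently, for each $c$, the section hypergraph of $\tilde{G}$ induced by $N(V_c)$ is a complete, loop-free hypergraph. Now fix $c \in \C$ and let $G'_c$ be the section hypergraph of $\tilde{G}$ induced by $V_c \cup N(V_c)$ and $H_c$ the section induced by $V \setminus V_c$. Their intersection is precisely the section hypergraph on $N(V_c)$, which is complete and has no plus loops, so \cref{cor: decomp} decomposes $\PP(\tilde{G})$ into $\PP(G'_c)$ and $\PP(H_c)$. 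By construction, $G'_c$ has $|V_c| + |N(V_c)|$ nodes and exactly $|V_c|$ plus loops, all supported on $V_c$, matching the stated description.

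To finish, I iterate the peeling on $H_c$, removing the next component $V_{c+1}$, and so on. Two bookkeeping checks keep the iteration honest: (a) every augmentation hyperedge lies inside $V \setminus V^+$, so none of them touches the just-peeled set $V_c$; and (b) $N(V_{c'}) \cap V_c = \emptyset$ for every $c' \neq c$, since $V_c \subseteq V^+$ while $N(V_{c'}) \cap V^+ = \emptyset$. Together these facts guarantee that the intersection with the next $G'_{c'}$ is again the complete, loop-free section hypergraph on $N(V_{c'})$, so \cref{cor: decomp} applies at every step. After all $c \in \C$ are processed, the residual hypergraph $G'_0$ is the section induced by $V \setminus \bigcup_c V_c$, which has $|V| - \sum_c |V_c|$ nodes and no plus loops, as required. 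The main technical obstacle is precisely this iteration bookkeeping --- verifying that completeness and absence of plus loops are preserved at each successive intersection --- which is secured by the initial augmentation combined with the disjointness $N(V_c) \cap V^+ = \emptyset$.
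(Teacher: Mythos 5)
Your proposal is correct and follows essentially the same route as the paper: both peel off one component $V_c$ at a time via \cref{cor: decomp}, using the key fact that $N(V_c)\cap V^+=\emptyset$ so the intersection is the complete, loop-free hypergraph on $N(V_c)$, and both invoke \cref{obsxxx} to justify the augmentation. The only (cosmetic) differences are that you add all augmentation hyperedges up front and argue the iteration directly, whereas the paper adds them one component at a time inside a formal induction on $|\C|$.
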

\begin{proof}
Let $G'=(V,E',L)$ be a hypergraph with $E' \supseteq E$ such that for each edge $e \in E' \setminus E$ we have $e  \subseteq N(V_c)$ for some $c \in \C$. We prove that $\PP(G')$ admits an extended formulation obtained by putting together formulations for $\PP(G'_c)$, $c \in \C \cup \{0\}$, where $G'_c$, $c \in \C$ is a hypergraph with $|V_c|+|N(V_c)|$ nodes and $|V_c|$ plus loops, and $G'_0$ is a hypergraph with $|V|-\sum_{c \in \C}{|V_c|}$ nodes and no plus loops.  Then by~\cref{obsxxx}, the statement follows for $\QP(G)$. Notice that since by assumption, for each edge $e \in E' \setminus E$ we have $e  \subseteq N(V_c)$ for some $c \in \C$, the section hypergraph of $G'$ induced by $V^+$ coincides with the subgraph of $G$ induced by $V^+$; \ie the graph $G_{V^+}$.

We prove by induction on the number of connected components of the section hypergraph of $G'$ induced by nodes with plus loops; \ie $|\C|$. In the base case we have $|\C| = 0$; in this case, $\PP(G')$ is a polyhedron and therefore by defining $G'_0 := G'$ the statement follows. Hence, suppose that $|\C| \geq 1$.
Consider a connected component $V_{\tilde c}$ for some $\tilde c \in \C$.
Define the hypergraph $\bar G=(V,\bar E, L)$, where $\bar E := E' \cup \{e \subseteq N(V_{\tilde c}): |e| \geq 2\}$. By~\cref{obsxxx}, a formulation for $\PP(\bar G)$ serves as an extended formulation for $\PP(G')$. Denote by $\bar G_{\tilde c}$ the section hypergraph of $\bar G$ induced by $V_{\tilde c} \cup N(V_{\tilde c})$ and denote by $H$ the section hypergraph of $\bar G$ induced by $V \setminus V_{\tilde c}$. Notice that $\bar G=\bar G_{\tilde c} \cup H$ and that $\bar G_{\tilde c} \cap H$ is a complete hypergraph with node set $N(V_{\tilde c})$. Moreover, since $V_{\tilde c}$ is a connected component of $G_{V^+}$, we have $V^+ \cap N(V_{\tilde c}) = \emptyset$; \ie the hypergraph $\bar G_{\tilde c} \cap H$ does not have any plus loops.
Therefore, all assumptions of~\cref{cor: decomp} are satisfied and $\PP(\bar G)$ is decomposable into $\PP(G'_{\tilde c})$ and $\PP(H)$. By construction, the hypergraph $\bar G_{\tilde c}$ has $|V_{\tilde c}|+|N(V_{\tilde c})|$ nodes and $|V_{\tilde c}|$ plus loops. Moreover, the connected components of the section hypergraph of $H$ induced by nodes with plus loops are $V_c$, $c \in \C \setminus \{\tilde c\}$. Therefore, by the induction hypothesis, $\PP(H)$ admits an extended formulation obtained by putting together formulations for $\PP(G'_c)$, $c \in \C \setminus \{\bar c\}\cup \{0\}$, where $G'_c$, $c \in \C \setminus \{\bar c\}$ is a hypergraph with $|V_c|+|N(V_c)|$ nodes and $|V_c|$ plus loops, and $G'_0$ is a hypergraph with $|V|-\sum_{c \in \C}{|V_c|}$ nodes and no plus loops and this completes the proof.
%
\end{proof}

In the special case where $|V_c| = 1$ for all $c \in \C$; \ie the set $V^+$ defined by~\eqref{vplus} is a stable set of the graph $G$, from~\cref{naiveDecomp} it follows that $\QP(G)$ has an extended formulation  obtained by putting together formulations for $\PP(G'_v)$, $v \in V^+ \cup \{0\}$, where each $G'_v$, $v \in V^+$ is a hypergraph with one plus loop and $\deg(v)+1$ nodes, where $\deg(v)$ denotes the degree of node $v$, and where $G'_0$ is a hypergraph with $|V|-|V^+|$ nodes with no plus loops. In~\cite{DeyIda25}, the authors use this key property to show that in this case $\QP(G)$ is SOC-representable. In this paper, we will use~\cref{naiveDecomp} to obtain a more general sufficient condition under which $\QP(G)$ is SDP-representable.

\subsection{The RLT hierarchy for polynomial optimization}

The RLT is a systematic procedure for constructing a hierarchy of
increasingly stronger LP relaxations for mixed-integer polynomial optimization problems~\cite{SheAda90}. In the pure binary setting,
the $n$th-level relaxation, where $n$ is the number of variables, coincides with the convex hull of the feasible set.
Although RLT was originally developed for 0-1 problems, several
extensions have been proposed for continuous optimization problems~\cite{SheTun95,SheAda99,anstreicher17,Hertog21}. These approaches yield strong relaxations for general nonconvex problems, but they typically do not characterize the convex hull. In contrast, in~\cite{DeyIda25}, the authors present an extension of RLT for box-constrained quadratic programs and obtain sufficient conditions under which the proposed SOC-representable relaxations characterize $\QP(G)$. Their construction combines classical RLT constraints with convex quadratic inequalities of the form $z_{ii} \geq z^2_i$.
In this paper, we significantly generalize this convexification framework by developing a novel way to integrate RLT constraints into existing SDP relaxations.

In the following, we provide a brief overview of the RLT terminology and results that we will use to develop our convexification technique.
Consider any $\S \subseteq \{0,1\}^n$, let $z \in \S$. In a similar vein to~\cite{SheAda90}, for some $d \in [n]$, we define a \emph{polynomial factor} as
\begin{equation}\label{pfactor}
f_d(J_1, J_2) := \prod_{i \in J_1}{z_i}\prod_{i \in J_2}{(1-z_i)}, \quad J_1, J_2 \subseteq [n], \; J_1 \cap J_2 = \emptyset, \;|J_1 \cup J_2| = d,
\end{equation}
where we define $z_{\emptyset} = 1$.
Since by assumption $z \in \{0,1\}^n$, it follows that $f_d(J_1, J_2) \geq 0$ for all $d \in [n]$ and all $J_1, J_2$ satisfying the above conditions. We then expand these polynomial factors and rewrite them as
$$
f_d(J_1, J_2) = \sum_{t: t\subseteq J_2}{(-1)^{|t|} \prod_{i\in J_1}{z_i}\prod_{i\in t}{z_i}}.
$$
Next, we linearize the polynomial factors by introducing new variables
\begin{equation}\label{oldvars}
z_{J_1\cup t} :=  \prod_{i\in J_1} {z_i}\prod_{i\in t}{z_i}, \; \forall t \subseteq J_2,
\end{equation}
to obtain the following system of valid linear inequalities, in an extended space, for the set $\S$:
\begin{equation}\label{rlteq}
\ell_d(J_1, J_2) := \sum_{t: t\subseteq J_2}{(-1)^{|t|} z_{J_1 \cup t}} \geq 0 , \quad J_1, J_2 \subseteq [n], \; J_1 \cap J_2 = \emptyset, \;|J_1 \cup J_2| = d.
    \end{equation}

\begin{observation}\label{remark:dmnc}
     In~\cite{SheAda90}, the authors proved that for any $1\leq d < n$,  the system of inequalities $\ell_d(J_1, J_2) \geq 0$ for all $J_1, J_2 \subseteq [n]$ satisfying $J_1 \cap J_2 = \emptyset$ and $|J_1 \cup J_2| = d$ is implied by the system of inequalities $\ell_{d+1}(J_1, J_2) \geq 0$ for all $J_1, J_2 \subseteq [n]$ satisfying $J_1 \cap J_2 = \emptyset$ and $|J_1 \cup J_2| = d+1$. The proof uses the fact that for any $k \in [n] \setminus (J_1 \cup J_2)$ we have
     $$
     f_{d}(J_1, J_2) = f_{d+1}(J_1 \cup \{k\}, J_2)+ f_{d+1}(J_1 , J_2\cup \{k\}),
     $$
     implying that the nonnegativity of $\ell_{d}(J_1, J_2)$ follows from the nonnegativity of
     $\ell_{d+1}(J_1 \cup \{k\}, J_2)$ and $\ell_{d+1}(J_1 , J_2\cup \{k\})$
     (see Lemma~1~\cite{SheAda90}).
\end{observation}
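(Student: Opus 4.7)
The plan is to prove the observation in two steps: (i) verify the polynomial identity stated in the body of the observation, and (ii) transfer this identity to the linearized inequalities via the substitution rule in~\eqref{oldvars}. Once both steps are in place, the monotonicity claim follows in a single line.

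For step (i), I would simply write $f_d(J_1, J_2) = f_d(J_1, J_2) \cdot \bigl(z_k + (1-z_k)\bigr)$ for any $k \in [n] \setminus (J_1 \cup J_2)$ and distribute. The first summand $f_d(J_1, J_2) \cdot z_k$ matches the definition of $f_{d+1}(J_1 \cup \{k\}, J_2)$ from~\eqref{pfactor}, and the second summand $f_d(J_1, J_2) \cdot (1 - z_k)$ matches $f_{d+1}(J_1, J_2 \cup \{k\})$. The hypothesis $k \notin J_1 \cup J_2$ is exactly what is needed for the two sets $J_1 \cup \{k\}$, $J_2$ (and $J_1$, $J_2 \cup \{k\}$) to satisfy the disjointness requirement in~\eqref{pfactor}.

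For step (ii), I would expand both sides of the polynomial identity as in~\eqref{rlteq}, that is, expand each $f$ as a signed sum over subsets of its ``$J_2$-block,'' and then apply the linearization rule~\eqref{oldvars} term by term. The key verification here is that each monomial appearing on the right-hand side lifts to the same extended variable $z_S$ as the matching monomial on the left-hand side, where the index set $S \subseteq [n]$ is a union of the form $J_1' \cup t$. Because~\eqref{oldvars} labels the lifted variable purely by the underlying subset of $[n]$ and is insensitive to how that subset is split into a ``$J_1$-part'' and a ``$t$-part,'' the polynomial identity carries over verbatim to the lifted form $\ell_d(J_1, J_2) = \ell_{d+1}(J_1 \cup \{k\}, J_2) + \ell_{d+1}(J_1, J_2 \cup \{k\})$.

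The conclusion is then immediate: under the level-$(d+1)$ RLT system both $\ell_{d+1}(J_1 \cup \{k\}, J_2) \geq 0$ and $\ell_{d+1}(J_1, J_2 \cup \{k\}) \geq 0$ hold, so their sum $\ell_d(J_1, J_2)$ is nonnegative as well, and an arbitrary $J_1, J_2$ with $|J_1 \cup J_2| = d$ admits at least one such $k$ since $d < n$. The only place where anything delicate happens is the bookkeeping in step (ii): one must confirm that two distinct expansions $(J_1', t')$ and $(J_1'', t'')$ with $J_1' \cup t' = J_1'' \cup t''$ refer to the same lifted variable. This is essentially built into the indexing convention $z_{J_1 \cup t}$ of~\eqref{oldvars}, so in the end the entire argument reduces to checking a three-term polynomial identity and reading off the corresponding three-term linear identity in the lifted variables.
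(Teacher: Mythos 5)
Your proposal is correct and follows exactly the route the paper indicates: the observation itself is stated as a citation of Lemma~1 of Sherali--Adams, and the key identity $f_{d}(J_1,J_2)=f_{d+1}(J_1\cup\{k\},J_2)+f_{d+1}(J_1,J_2\cup\{k\})$ that you verify via $1=z_k+(1-z_k)$ is precisely the fact the paper quotes. Your additional bookkeeping in step~(ii), confirming that the linearization $z_{J_1\cup t}$ depends only on the underlying subset so the identity passes to $\ell_d(J_1,J_2)=\ell_{d+1}(J_1\cup\{k\},J_2)+\ell_{d+1}(J_1,J_2\cup\{k\})$, is a correct and complete filling-in of the details the paper leaves to the reference.
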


We should remark that the McCormick inequalities~\eqref{McCor} are obtained by letting $d=2$ in~\eqref{rlteq}, while the triangle inequalities~\eqref{triineq} are implied by inequalities~\eqref{rlteq} with $d = 3$.
For binary polynomial optimization, the explicit description for the multilinear polytope of a complete hypergraph can be obtained using the RLT framework.

\begin{theorem}[\cite{SheAda90}]\label{prop:RLT}
Let $G=(V,E)$ be a complete hypergraph with $n$ nodes. Then the multilinear polytope $\MP(G)$ is given by
\begin{equation}\label{eq:rlt}
\ell_n(J, V \setminus J) \geq 0, \quad \forall J \subseteq V,
\end{equation}
where $\ell_n(J, V \setminus J)$ is defined by~\eqref{rlteq}.
\end{theorem}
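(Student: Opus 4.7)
The plan is to prove the equality $\MP(G) = \{z : \ell_n(J, V \setminus J) \geq 0, \, \forall J \subseteq V\}$ by a direct two-way argument that exploits the Möbius-inversion structure of the RLT level-$n$ constraints on the Boolean lattice. Throughout, I'd work with the fact that for a complete hypergraph on $n$ nodes, the space is indexed by all $K \subseteq V$ (identifying $V \cup E$ with the nonempty subsets of $V$ and setting $z_\emptyset := 1$), and the extreme points of $\MP(G)$ are, by \cref{recCone}(ii), the characteristic vectors $\bar z^{(S)}$, $S \subseteq V$, defined by $\bar z^{(S)}_K = 1$ if $K \subseteq S$ and $\bar z^{(S)}_K = 0$ otherwise.

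First I would establish validity. Fix $J, S \subseteq V$ and compute directly:
\[
\ell_n(J,V \setminus J)(\bar z^{(S)}) = \sum_{t \subseteq V \setminus J} (-1)^{|t|} \bar z^{(S)}_{J \cup t} = \sum_{t \subseteq V \setminus J,\, J \cup t \subseteq S} (-1)^{|t|}.
\]
If $J \not\subseteq S$ this is empty, hence $0$. If $J \subseteq S$, the sum ranges over $t \subseteq S \setminus J$, giving $(1-1)^{|S \setminus J|}$, which equals $1$ when $S = J$ and $0$ otherwise. Thus $\ell_n(J, V \setminus J)(\bar z^{(S)}) = [S = J]$, which is nonnegative and moreover shows the RLT level-$n$ inequalities are valid at every vertex.

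Next I would prove the reverse inclusion, which is the technical core. Given any $z$ in the RLT polytope, set $\lambda_J := \ell_n(J, V\setminus J) \geq 0$. The key claim is the Möbius-inversion identity
\[
z_K = \sum_{J \supseteq K} \lambda_J \qquad \text{for every } K \subseteq V.
\]
To prove it I'd substitute the definition of $\lambda_J$, swap the order of summation by introducing $L := J \cup t$, and observe that for fixed $L \supseteq K$ the inner sum over $J$ with $K \subseteq J \subseteq L$ yields $\sum_{J} (-1)^{|L \setminus J|}$, which by the standard alternating-sum identity on a Boolean lattice equals $[L = K]$; collapsing gives $z_K$. Applied with $K = \emptyset$ (using $z_\emptyset = 1$), this identity also yields $\sum_J \lambda_J = 1$, so $\{\lambda_J\}_{J \subseteq V}$ is a probability distribution. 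Finally, the same identity read in the other direction says precisely $z = \sum_J \lambda_J \bar z^{(J)}$, which exhibits $z$ as a convex combination of vertices of $\MP(G)$.

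The only step requiring care is the bookkeeping in the Möbius inversion: one must verify the identity for \emph{every} $K \subseteq V$, including the singletons (giving $z_i$), the pairs and larger subsets (giving $z_e$), and the empty set (giving the normalization). All three families fall out of the same computation once the change of variables $L = J \cup t$ is carried out, so the argument is uniform and does not require separating cases, which is why this is the cleanest way to obtain the result of \cite{SheAda90}.
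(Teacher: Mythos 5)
Your proposal is correct. Note, however, that the paper does not prove this statement at all: it is stated as a known result, imported verbatim from Sherali and Adams \cite{SheAda90}, so there is no in-paper argument to compare against. What you have supplied is the standard self-contained proof via the zeta/M\"obius transform on the Boolean lattice: validity follows from $\ell_n(J,V\setminus J)(\bar z^{(S)})=[S=J]\ge 0$, and the converse follows from the inversion identity $z_K=\sum_{J\supseteq K}\lambda_J$ with $\lambda_J:=\ell_n(J,V\setminus J)$, which together with $\sum_J\lambda_J=z_\emptyset=1$ exhibits any feasible $z$ as the convex combination $\sum_J\lambda_J\bar z^{(J)}$ of vertices. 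Both directions check out; the key alternating-sum identity $\sum_{J:K\subseteq J\subseteq L}(-1)^{|L|-|J|}=[L=K]$ is exactly the one the paper itself invokes in \cref{pastRel}, so your argument is fully consistent with the machinery used elsewhere in the text. One minor point of hygiene: since $\MP(G)$ is defined as a convex hull, you should say that $z$ is a convex combination of \emph{feasible points of the underlying nonconvex set} (which is all that membership in the hull requires), rather than leaning on the claim that every $\bar z^{(S)}$ is an extreme point; the latter is true (the $2^n$ vectors $\bar z^{(S)}$ are affinely independent, so $\MP(G)$ is a simplex) but is not needed.
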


To obtain extended formulations for $\QP(G)$, by~\cref{obsxxx} and~\cref{minusloops}, it suffices to generalize~\cref{prop:RLT} to the case where the hypergraph $G$ has plus loops. In~\cite{DeyIda25}, the authors address the special case in which $G$ has one plus loop.

\begin{theorem}[theorem~1 in~\cite{DeyIda25}]\label{convres}
    Let $G=(V,E, L)$ be a complete hypergraph with $n$ nodes.
    Suppose that $L=L^+=\{j, j\}$ for some $j \in V$. Then $\PP(G)$ is given by:
    \begin{eqnarray}
    \begin{split}\label{convHull}
      & z_{jj} \geq \sum_{J \subseteq V: J \ni \{j\}}\frac{(\ell_n(J, V\setminus J))^2}{\ell_{n-1}(J\setminus \{j\},V\setminus J)}\\
            & \ell_n(J, V \setminus J) \geq 0,  \qquad \forall J \subseteq V.
      \end{split}
    \end{eqnarray}
\end{theorem}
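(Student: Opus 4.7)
The plan is to exploit the characterization of extreme points and the recession cone of $\PP(G)$ from Lemma 1: since $V^+ = \{j\}$, every extreme point has the form $v(S,t)$, indexed by $S \subseteq V\setminus\{j\}$ and $t \in [0,1]$, with $v(S,t)_i = 1$ for $i \in S$, $v(S,t)_i = 0$ for $i \in V\setminus(S\cup\{j\})$, $v(S,t)_j = t$, $v(S,t)_{jj} = t^2$, and $v(S,t)_e = \prod_{i\in e} v(S,t)_i$ for every edge $e$; the recession cone is the nonnegative ray along the $z_{jj}$-coordinate. I would prove containment in both directions.

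For validity (i.e., the right-hand side contains $\PP(G)$), the linear inequalities $\ell_n(J, V\setminus J) \geq 0$ follow because they are affine in the node and edge coordinates, coincide with the corresponding nonnegative polynomial factor at every extreme point, and are unaffected by the recession ray. For the nonlinear inequality, I would first check it at each $v(S,t)$: for $J \ni j$, both $\ell_n(J, V\setminus J)$ and $\ell_{n-1}(J\setminus\{j\}, V\setminus J)$ vanish unless $J\setminus\{j\} = S$, in which case they equal $t$ and $1$ respectively, so the right-hand side collapses to $t^2 = z_{jj}$. To extend to an arbitrary $z = \sum_k \lambda_k v(S_k,t_k) + r\, e_{jj}$ with $r \geq 0$, I would bucket by $S$: by linearity,
\[
\ell_n(S\cup\{j\}, V\setminus(S\cup\{j\}))(z) = \sum_{k:\, S_k=S} \lambda_k t_k, \qquad \ell_{n-1}(S, V\setminus(S\cup\{j\}))(z) = \sum_{k:\, S_k=S} \lambda_k,
\]
and the weighted Cauchy--Schwarz inequality gives $(\ell_n)^2/\ell_{n-1} \leq \sum_{k:\, S_k=S} \lambda_k t_k^2$ for each $S$; summing over $S$ yields the target inequality, since $\sum_k \lambda_k t_k^2 \leq z_{jj}$.

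For the reverse containment, given $z^*$ satisfying the system, I would build an explicit decomposition. For each $S \subseteq V\setminus\{j\}$, set $\lambda_S := \ell_{n-1}(S, V\setminus(S\cup\{j\}))(z^*)$ and, when $\lambda_S > 0$, $t_S := \ell_n(S\cup\{j\}, V\setminus(S\cup\{j\}))(z^*)/\lambda_S$. Using Observation 1---in particular the identity $f_{n-1}(S, V\setminus(S\cup\{j\})) = f_n(S, V\setminus S) + f_n(S\cup\{j\}, V\setminus(S\cup\{j\}))$ together with the hypothesis $\ell_n \geq 0$---I would verify $\lambda_S \geq 0$ and $t_S \in [0,1]$. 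The telescoping identities $\sum_S \lambda_S = 1$, $\sum_S \lambda_S v(S,t_S)_i = z_i^*$ for every node $i$, and $\sum_S \lambda_S v(S,t_S)_e = z_e^*$ for every edge $e$ then follow from linearizing the polynomial identity $\prod_{i \in V\setminus\{j\}}(z_i + (1-z_i)) = 1$, multiplied where appropriate by $z_j$ or by individual $z_i$'s. By construction $\sum_S \lambda_S t_S^2$ equals the right-hand side of the $z_{jj}$ constraint, which by hypothesis is at most $z_{jj}^*$; the slack is absorbed by a nonnegative recession direction in the $z_{jj}$-coordinate supplied by Lemma 1(iii), producing $z^*$ as a convex combination of extreme points plus a recession direction.

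The main obstacle will be the bookkeeping in the reverse direction: cleanly verifying the linearization identities, in particular the edge identities which split according to whether $j \in e$ or $j \notin e$, and handling the degenerate cases $\lambda_S = 0$, in which $t_S$ is undefined but the corresponding numerator $\ell_n(S\cup\{j\}, V\setminus(S\cup\{j\}))(z^*)$ also vanishes (again by the Observation 1 identity together with nonnegativity of $\ell_n$), so that the $S$-term may simply be omitted from the convex combination without disturbing any of the identities.
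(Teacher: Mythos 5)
This statement is imported from~\cite{DeyIda25} (their theorem~1) and is not proved anywhere in the present paper, so there is no in-paper proof to compare against; I can only assess your argument on its own terms and against the machinery the paper does develop. Your proof is correct and complete in outline. The validity direction is sound: the RLT inequalities are linear and hold at the extreme points and along the recession ray of \cref{recCone}, and your bucketed Cauchy--Schwarz step is exactly the quadratic-over-linear convexity argument needed to pass from extreme points to convex combinations. The reverse direction is also right: the identity $\ell_{n-1}(S,V\setminus(S\cup\{j\}))=\ell_n(S\cup\{j\},V\setminus(S\cup\{j\}))+\ell_n(S,V\setminus S)$ from \cref{remark:dmnc} gives both $\lambda_S\ge 0$ and $t_S\in[0,1]$, and it also disposes of the degenerate $\lambda_S=0$ terms as you note; the telescoping identities do follow from linearizing $\prod_{i\in V\setminus\{j\}}(z_i+(1-z_i))=1$ multiplied by the relevant monomials, since all polynomials involved are multilinear so linearization is coefficientwise. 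One point worth making explicit is that writing an arbitrary point of $\PP(G)$ as a \emph{finite} convex combination of extreme points plus a recession direction uses that $\PP(G)$ is closed and line-free (its recession cone is a single ray), which \cref{recCone} supplies. It is worth contrasting your route with the strategy the paper uses for the analogous two-plus-loop result, \cref{th:2p}: there the author proceeds by induction on the number of loop-free nodes, peeling off one binary node $k$ at a time via the disjunction $z_k\in\{0,1\}$ and disjunctive programming, whereas you decompose over all $2^{n-1}$ binary patterns $S\subseteq V\setminus\{j\}$ simultaneously. Your global decomposition is more direct and exploits the closed form of the inequality in~\eqref{convHull}; the paper's inductive scheme is less explicit but scales to settings (such as two interacting plus loops) where no such closed-form projection is available, which is precisely why the paper adopts it.
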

In this paper, we consider the case where the complete hypergraph $G$ has two plus loops and obtain an SDP-representable extended formulation for $\PP(G)$.

\section{New SDP relaxations}
\label{sec: newSDPs}

In this section, we introduce a new class of SDP relaxations for box-constrained quadratic programs. Our proposed relaxations are obtained by a novel incorporation of the RLT framework into existing SDP relaxations.
Moreover, our SDP relaxations exploit the sparsity of the optimization problem.
In the remainder of the paper, thanks to~\cref{minusloops}, for simplicity of presentation and without loss of generality, we assume that the graph $G$ has no minus loops.

To extend the RLT scheme to continuous quadratic programs, we introduce additional polynomial factors that contain a square term $z^2_i$, for some $i \in [n]$. In the remainder of this paper, for notational simplicity, instead of $f_d(J_1 \cup J_2)$ (resp. $\ell_d(J_1 \cup J_2)$), we write $f(J_1 \cup J_2)$ (resp. $\ell(J_1 \cup J_2)$), since $d$ is uniquely determined by $J_1, J_2$; \ie $d=|J_1 \cup J_2|$. In addition, unlike the original definition, we include the case where $J_1 = J_2 = \emptyset$, for which we have $d=0$ and $f(\emptyset, \emptyset) = \ell(\emptyset, \emptyset) = 1$.
Now, for any $J_1, J_2 \subseteq [n]$ satisfying $J_1 \cap J_2 = \emptyset$, consider the polynomial factor $f(J_1, J_2)$ as defined by~\eqref{pfactor}. For any $i \in [n] \setminus (J_1 \cup J_2)$, we define a new polynomial factor:
\begin{equation}\label{newfactors}
    g(i,J_1, J_2) := z^2_{i} f(J_1, J_2).
\end{equation}
We further expand and linearize $g(i,J_1, J_2)$ by introducing new variables
\begin{equation}\label{newvars}
    z^{J_1 \cup t}_{ii} := z^2_{i} \prod_{j \in J_1 \cup t}{z_j}, \; \forall t \subseteq J_2,
\end{equation}
where we define $z^{\emptyset}_{ii} := z_{ii} = z^2_i$.
Hence, we obtain the following linear relationships in an extended space:
\begin{equation}\label{newrels}
    \rho(i, J_1, J_2) := \sum_{t: t\subseteq J_2}{(-1)^{|t|} z_{ii}^{J_1 \cup t}}, \quad \forall J_1, J_2 \subseteq [n]: J_1 \cap J_2 = \emptyset, \; \forall i \in [n] \setminus (J_1 \cup J_2).
\end{equation}
Henceforth, given two convex sets $\S \subseteq \R^n$ and $\C \subseteq \R^{n+d}$ for some $d \geq 0$, we say that $\C$ is a \emph{convex relaxation} of $\S$ if the projection of $\C$ onto the space of $\S$ contains $\S$; \ie $\{x \in \R^n: \exists y \in \R^d, \; (x,y) \in \C\} \supseteq \S$.  Moreover, whenever we give an explicit description for $\C$ in which some of the variables in $\S$ are not explicitly present, we imply that those variables can take any value in $\R$ and we still say that the explicit description defines a convex relaxation for $\S$.

Consider a graph $G=(V,E,L)$. Denote by $P \subseteq V$ a set of nodes with plus loops  and denote by $M \subseteq V$ a set of nodes  such that $M \cap P = \emptyset$. Moreover, if $M \neq \emptyset$, then suppose that each node in $M$ is adjacent to at least one node in $P$.
The next proposition presents a new type of SDP relaxations for Problem~\ref{lQP}.

\begin{theorem}\label{newrelaxation}
    Let $G=(V,E,L)$ be a graph and let $P, M$ be (possibly empty) subsets of $V$ as defined above. 
    Define
    $$M_R := (M \cup P) \setminus R, \quad \forall R \subseteq P$$
    Then the following defines a convex relaxation of $\QP(G)$:
\begin{align}
    &{\footnotesize
    \begin{bmatrix}
      \ell(J, M_R\setminus J) &
      \ell(J \cup\{i_1\}, M_R\setminus J) &
      \ell(J \cup \{i_2\}, M_R\setminus J) &
      \cdots &
      \ell(J \cup \{i_p\}, M_R\setminus J) \\[6pt]
      \ell(J \cup\{i_1\}, M_R\setminus J) &
      \rho(i_1,J,M_R \setminus J) &
      \ell(J \cup\{i_1,i_2\}, M_R\setminus J) &
      \cdots &
      \ell(J \cup\{i_1,i_p\}, M_R\setminus J) \\[6pt]
      \ell(J \cup\{i_2\}, M_R\setminus J) &
      \ell(J \cup\{i_1,i_2\}, M_R\setminus J) &
      \rho(i_2,J, M_R\setminus J) &
      \cdots &
      \ell(J \cup\{i_2,i_p\}, M_R\setminus J) \\[6pt]
      \vdots & \vdots & \vdots & \ddots & \vdots \\[6pt]
      \ell(J \cup\{i_p\}, M_R\setminus J) &
      \ell(J \cup\{i_1,i_p\}, M_R\setminus J) &
      \ell(J \cup\{i_2,i_p\}, M_R\setminus J) &
      \cdots &
      \rho(i_p,J,M_R \setminus J)
    \end{bmatrix}
    }
    \succeq 0 \nonumber \\
    &\qquad \forall R :=\{i_1, \cdots, i_p\} \subseteq P, \; \forall J \subseteq M_R. \label{psd2}
\end{align}
\end{theorem}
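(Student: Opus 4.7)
The plan is to certify~\eqref{psd2} at every $\bar z \in \QP(G)$ by exhibiting an explicit lifting $\bar w$ of the higher-order extended variables. By~\cref{recCone}, $\QP(G)$ is closed and every $\bar z \in \QP(G)$ decomposes as $\bar z = \sum_k \lambda_k z^k + r$ with $z^k \in {\rm ext}(\QP(G))$, $\lambda_k \geq 0$, $\sum_k \lambda_k = 1$, and $r \in {\rm rec}(\QP(G))$. At each extreme point $z^k$, I lift by natural products: $w^k_T := \prod_{i \in T} z^k_i$ for $T \subseteq V$ with $|T| \geq 2$, and $w^k_{ii,T} := (z^k_i)^2 \prod_{j \in T} z^k_j$ for $i \in P$ and $T \subseteq V$. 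For the recession direction $r$, I lift trivially: $w^r$ is zero on every extended coordinate except $w^r_{ii} := r_{ii} \geq 0$ for $i \in P$. The candidate extension of $\bar z$ is then $\bar w := \sum_k \lambda_k w^k + w^r$.

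The main step is to verify that the matrix in~\eqref{psd2}, evaluated at $(z^k, w^k)$, is PSD. Using the identity $\sum_{t \subseteq J_2}(-1)^{|t|} \prod_{i \in t} z^k_i = \prod_{i \in J_2}(1-z^k_i)$, every entry $\ell(J',J_2)$ collapses under natural substitution to $f(J',J_2)$ evaluated at $z^k$, and every diagonal entry $\rho(i_k, J, M_R \setminus J)$ collapses to $(z^k_{i_k})^2$ times $f(J, M_R \setminus J)$ at $z^k$. Reading off rows and columns (using that $i_k \in R$ is disjoint from $J \cup (M_R \setminus J) \subseteq M_R$), the matrix factors as
\[
\Bigl(\prod_{i \in J} z^k_i \prod_{i \in M_R \setminus J}(1-z^k_i)\Bigr) \cdot v_k v_k^\top, \qquad v_k := (1, z^k_{i_1}, \ldots, z^k_{i_p})^\top.
\]
By~\cref{recCone}(ii), every coordinate $z^k_i$ lies in $[0,1]$, so the scalar factor is a nonnegative product of numbers in $[0,1]$, making this rank-one matrix PSD.

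Next, I handle the recession direction by decomposing every matrix entry into its constant and linear-in-$(z,w)$ parts; using $\sum_k \lambda_k = 1$ to absorb the constant part into the convex combination gives
\[
M(\bar z, \bar w) \;=\; \sum_k \lambda_k\, M(z^k, w^k) \;+\; L_M(r, w^r),
\]
where $L_M(r, w^r)$ denotes the linear part of the matrix evaluated at $(r, w^r)$. Since $r$ vanishes on $V \cup E$ and $w^r$ is supported only on the $z_{ii}$ coordinates, every $\ell$-entry contributes zero to $L_M(r, w^r)$. Each diagonal $\rho(i_k, J, M_R\setminus J)$-entry contributes $r_{i_k i_k}$ exactly when $J = \emptyset$ (via the $t = \emptyset$ term) and zero otherwise. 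Thus $L_M(r, w^r)$ is either the zero matrix or ${\rm diag}(0, r_{i_1 i_1}, \ldots, r_{i_p i_p})$ with nonnegative entries; in both cases it is PSD. A sum of PSD matrices is PSD, completing the argument.

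The only conceptual content is the rank-one Gram factorization at extreme points, which essentially reflects the design of $\ell$ and $\rho$ to linearize the natural products, so that $M(z^k, w^k)$ becomes $f(J, M_R \setminus J)$ times the outer product $v_k v_k^\top$. The main technical care is the affine decomposition in the presence of the recession direction: one must isolate the single constant term coming from $z_\emptyset = 1$ (which is absorbed by $\sum_k \lambda_k = 1$) so that it is not double-counted when the recession contribution $L_M(r, w^r)$ is added back in.
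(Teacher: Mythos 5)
Your proposal is correct, and its computational core --- that at a naturally lifted point the matrix in~\eqref{psd2} collapses to $f(J, M_R\setminus J)\cdot v v^\top$ with $v=(1,z_{i_1},\dots,z_{i_p})^\top$ and $f\ge 0$ on $[0,1]^V$ --- is exactly the identity the paper exploits. Where you differ is in how you pass from pointwise validity to all of $\QP(G)$. The paper never invokes the extreme-point structure: it takes the base LMI~\eqref{psd1}, which is valid at every point of the original nonconvex feasible set (where $z_{ii}\ge z_i^2$ makes the matrix rank-one plus a nonnegative diagonal), multiplies by the nonnegative polynomial factor $f(J,M_R\setminus J)$, linearizes, and then simply observes that the resulting set is convex, so its projection contains the convex hull of the lifted feasible points. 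You instead decompose $\bar z\in\QP(G)$ via \cref{recCone} into a convex combination of extreme points plus a recession direction, certify the LMI separately on each piece, and sum PSD matrices. Your route costs more machinery (the Minkowski-type representation of a closed, line-free convex set, and the careful bookkeeping of the constant term $z_\emptyset=1$ against $\sum_k\lambda_k=1$), but it buys something the paper glosses over: the paper's linearization step nominally uses~\eqref{newvars}, which defines $z_{ii}^{J}$ via $z_i^2$ rather than $z_{ii}$, so at feasible points with $z_{ii}>z_i^2$ the ``natural'' lifting needs a small reinterpretation; your explicit split into extreme points (where $z_{ii}=z_i^2$ exactly) and recession directions (which contribute only the nonnegative diagonal ${\rm diag}(0,r_{i_1i_1},\dots,r_{i_pi_p})$) makes that issue disappear. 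One cosmetic omission: the theorem also asserts convexity of the set defined by~\eqref{psd2}, which you use implicitly but should state --- it is immediate since each constraint is the preimage of the PSD cone under an affine map (and reduces to a linear inequality when $R=\emptyset$).
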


\begin{proof}
We first establish the convexity of the set defined by constraints~\eqref{psd2}. Two cases arise: if $R =\emptyset$, we have $M_R = M \cup P$; it then follows that inequalities~\eqref{psd2} simplify to level-$|M|+|P|$ RLT inequalities, which are linear; \ie $\ell(J, (M \cup P) \setminus J) \geq 0$ for all $J \subseteq M \cup P$. If $R \neq \emptyset$, then by definition of $\ell(.,.)$ and $\rho(.,.,.)$, each constraint in~\eqref{psd2} is the inverse image of the  positive semidefinite cone under an affine mapping, hence, defining a convex set.

Next, we show that the set defined by~\eqref{psd2} is a convex relaxation of $\QP(G)$. Letting $R = \emptyset$, as we described above, we obtain level-$|M|+|P|$ RLT inequalities which are valid inequalities for $\QP(G)$. Now let $R=\{i_1, \cdots, i_p\}$ for some $1 \leq p \leq |P|$.
Let $G_R$ denote the subgraph of $G$ induced by $R$. Then the following linear matrix inequality (LMI) defines a convex relaxation for $\QP(G_R)$ and therefore it is a convex relaxation for $\QP(G)$:
\begin{align}
&\begin{bmatrix}
1 & z_{i_1} & \cdots & z_{i_p} \\
z_{i_1} & z_{i_1i_1} & \cdots & z_{i_1 i_p} \\
\vdots & \vdots & \ddots & \vdots \\
z_{i_p} & z_{i_1 i_p} & \cdots & z_{i_p i_p}
\end{bmatrix} \succeq 0, \label{psd1}
\end{align}
where as before we define $z_{ij} = z_i z_j$ for all $1 \leq i \leq j \leq n$.
Now, for any $J \subseteq M_R$, consider the polynomial factor $f(J,M_R\setminus J)$ as defined by~\eqref{pfactor}. Recall that
$f(J,M_R\setminus J) \geq 0$ over the unit hypercube. Multiplying the LMI~\eqref{psd1} by $f(J,M_R\setminus J)$, using~\eqref{pfactor} and~\eqref{newfactors} to deduce the following relations
\begin{align*}
& f(J\cup\{i\},M_R\setminus J) = z_i f(J,M_R\setminus J), \; \forall i \in R  \\
& f(J\cup\{i,j\},M_R\setminus J) = z_{ij} f(J,M_R\setminus J), \; \forall i \neq j \in R\\
&g(i,J, M_R \setminus J) = z_{ii} f(J, M_R \setminus J), \;  \forall i \in R,
\end{align*}
and linearizing the resulting monomials using~\eqref{oldvars} and~\eqref{newvars}, we obtain~\eqref{psd2}. 
Therefore, the set defined by the collection of LMIs~\eqref{psd2} defines a convex relaxation of $\QP(G)$.
\end{proof}

In~\cref{newrelaxation}, letting $P= \emptyset$, LMIs~\eqref{psd2} simplify precisely to the level-$|M|$ RLT inequalities.
Otherwise, if $|P| \geq 1$, letting $R = \emptyset$ in~\eqref{psd2}, we obtain  level-$|M|$ RLT inequalities.
Moreover, if $M = \emptyset$,
then LMI~\eqref{psd2} corresponding to $R = P$ (\ie corresponding to $M_R = \emptyset$), simplifies to  LMI~\eqref{psd1}, which is used in existing SDP relaxations.
However, as we will prove in this section, for any $M_R \neq \emptyset$, the corresponding systems of LMI~\eqref{psd2} imply LMI~\eqref{psd1}. That is, our proposed convexification technique combines and strengthens existing LP and SDP relaxations, while exploiting the sparsity of  box-constrained QPs.

Now, consider a pair $P, M$ corresponding to LMIs~\eqref{psd2}; suppose that there exists some node $k \in M$ such that $\{k,k\} \in L^+$; \ie node $k$ has a plus loop. Define $P':=P \cup \{k\}$ and $M' = M \setminus \{k\}$. Then all LMIs~\eqref{psd2} corresponding to $P,M$ are present in the
LMIs~\eqref{psd2} corresponding to $P',M'$. That is, the convex relaxation corresponding to $P',M'$ is stronger than the convex relaxation corresponding to $P,M$. Henceforth, we assume that the nodes in $M$ do not have plus loops.

\begin{observation}\label{pastRel}
    The convexification technique of~\cite{DeyIda25} can be obtained as a special case of our proposed convexification technique by letting $|P|=1$. To see this, without loss of generality, let $P=\{1\}$ and let $M \subseteq [n] \setminus \{1\}$. Then system~\eqref{psd2} simplifies to:
    \begin{align}
& \ell(J, M\cup\{1\}\setminus J) \geq 0, \; \forall J \subseteq M\cup\{1\}.\label{rlt3}\\
& \begin{bmatrix}
\ell(J, M\setminus J)
  & \ell(J \cup\{1\}, M\setminus J) \\
\ell(J \cup\{1\}, M\setminus J)
  &\rho(1, J,M \setminus J)
\end{bmatrix}
 \succeq 0, \; \forall J \subseteq M. \label{psd3}
\end{align}
Inequalities~\eqref{rlt3} are obtained by letting $R=\emptyset$ in~\eqref{psd2}, while LMIs~\eqref{psd3} are obtained by letting $R=P=\{1\}$ in~\eqref{psd2}.
From RLT inequalities~\eqref{rlt3} and~\cref{remark:dmnc} it follows that  $\ell(J, M\setminus J) \geq 0$ for all $J \subseteq M$. Therefore, constraints~\eqref{psd3} can be equivalently written as
\begin{equation}\label{interSys}
    \rho(1, J,M \setminus J) \geq \frac{(\ell(J \cup\{1\}, M\setminus J))^2}{\ell(J, M\setminus J)}, \; \forall J \subseteq M,
\end{equation}
where we define $\frac{0}{0}:=0$.
We next prove that by projecting out variables $z^{J}_{11}$, for all nonempty $J \subseteq M$ from system~\eqref{interSys},
we obtain
\begin{equation}\label{goal}
    z_{11} \geq \sum_{J \subseteq M}{\frac{(\ell(J \cup\{1\}, M\setminus J))^2}{\ell(J, M\setminus J)}},
\end{equation}
which together with inequalities~\eqref{rlt3} coincide with the convex relaxation proposed in~\cite{DeyIda25} (see proposition~5 in~\cite{DeyIda25}).
To see this, we first prove that inequality~\eqref{goal} is implied by inequalities~\eqref{interSys}. From~\eqref{newrels} it follows that
\begin{align*}
    \sum_{J \subseteq M}{\rho(1, J,M \setminus J)}= &\sum_{J\subseteq M}{\sum_{t: t\subseteq M \setminus J}{(-1)^{|t|} z_{11}^{J_1 \cup t}}}\\
    = & \sum_{J \subseteq M}{\sum_{S \subseteq M: S \supseteq J}{(-1)^{|S|-|J|} z^S_{11}}}\\
    =& \sum_{S \subseteq M}{z^S_{11}\sum_{J \subseteq S}{(-1)^{|S|-|J|} }}\\
    =& z^{\emptyset}_{11}-\sum_{S \subseteq M, S \neq \emptyset}{z^S_{11} \sum_{k=0}^{|S|}{\binom{|S|}{k}}(-1)^{|S|-k}}\\
    =& z_{11}-\sum_{S \subseteq M, S \neq \emptyset}{z^S_{11}(1-1)^{|S|}}\\
    =& z_{11}.
\end{align*}
Recall that by definition $z^{\emptyset}_{11} = z_{11}$.
Therefore, summing up the inequalities~\eqref{interSys} we obtain the inequality~\eqref{goal}. To complete the proof, we next show that for any $z$ satisfying inequality~\eqref{goal}, we can find $z^S_{11}$ for all nonempty $S \subseteq M$ such that inequalities~\eqref{interSys} are satisfied.
For notational simplicity for each $J \subseteq M$, denote by $u_J$ the right-hand side of inequality~\eqref{interSys}.
For any $z$ satisfying the inequality~\eqref{goal}, let
\begin{equation}\label{chosen}
    z_{11}^{S} =\sum_{J \supseteq S}{u_J}, \quad \forall S \subseteq M: S \neq \emptyset.
\end{equation}
Substituting~\eqref{chosen} in inequalities~\eqref{interSys} we obtain:
\begin{align*}
    \rho(1,J,M\setminus J) =& \sum_{S \subseteq M: S \supseteq J}{(-1)^{|S|-|J|} z^S_{11}}\\
    \geq &\sum_{S \subseteq M: S \supseteq J}{(-1)^{|S|-|J|}{\sum_{K \supseteq S}{u_K}}}\\
    =&\sum_{K \supseteq J}{u_K \sum_{S: J \subseteq S \subseteq K}{(-1)^{|S|-|J|}}}\\
    =& u_J,
\end{align*}
where the inequality follows from~\eqref{goal}-~\eqref{chosen} and $z^{\emptyset}_{11} = z_{11}$. Moreover, the last equality follows since
\begin{equation*}
\sum_{S:\,J\subseteq S\subseteq K}(-1)^{|S|-|J|}
= \sum_{i=0}^{|K|-|J|} \binom{|K|-|J|}{i} (-1)^i
= (1-1)^{|K|-|J|}
=
\begin{cases}
1, & \text{if } K=J,\\[2mm]
0, & \text{if } K \neq J.
\end{cases}
\end{equation*}
Therefore, inequalities~\eqref{interSys} are satisfied. Hence, we have proved that the projection of the set defined by the system~\eqref{rlt3}-~\eqref{psd3} onto the $z$ space is given by
inequalities~\eqref{rlt3} and~\eqref{goal}, which is identical to the convex relaxation of~\cite{DeyIda25}.  While in the special case with $|P|=1$, it is possible to project out the auxiliary variables $z^{J}_{11}$, $J \subseteq M$, such a simplification is not possible for $|P| > 1$. Moreover, as detailed in~\cite{DeyIda25}, in the special case with $|P|=1$, the proposed convex relaxation is SOCP-representable. For $|P| > 1$, however, which is the focus of this paper, our proposed convex relaxation is SDP-representable.
Therefore, for $|P|=1$, the formulation of~\cite{DeyIda25} is preferable, whereas for $|P| > 1$,
one should rely on our SDP-representable  extended formulations. $\diamond$
\end{observation}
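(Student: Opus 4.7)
The plan is to verify in four steps that when $|P|=1$ the convex set defined by LMIs~\eqref{psd2} projects onto the relaxation from~\cite{DeyIda25} in the original $z$-space. First, I would specialize~\eqref{psd2} to $P=\{1\}$: the only subsets of $P$ are $R=\emptyset$ and $R=P=\{1\}$, which yield precisely~\eqref{rlt3} and~\eqref{psd3} respectively. Next, I would use the standard characterization of positive semidefiniteness for symmetric $2\times 2$ matrices to rewrite~\eqref{psd3} as the three scalar conditions $\ell(J, M\setminus J)\ge 0$, $\rho(1,J,M\setminus J)\ge 0$, and $\rho(1,J,M\setminus J)\cdot\ell(J,M\setminus J)\ge (\ell(J\cup\{1\}, M\setminus J))^2$. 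The first of these is already implied by~\eqref{rlt3} together with~\cref{remark:dmnc}, and dividing the third by $\ell(J, M\setminus J)$ with the convention $0/0:=0$ yields~\eqref{interSys}.

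Second, I would establish that~\eqref{goal} is implied by summing~\eqref{interSys} over all $J \subseteq M$. The key identity is $\sum_{J \subseteq M}\rho(1,J,M\setminus J) = z_{11}$, which follows by expanding each $\rho$ via~\eqref{newrels}, regrouping the double sum by $S := J \cup t$, and invoking the binomial identity $\sum_{J \subseteq S}(-1)^{|S|-|J|}=(1-1)^{|S|}$, which vanishes for every $S\neq\emptyset$ and leaves only the $S=\emptyset$ term $z^{\emptyset}_{11}=z_{11}$.

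Third, and this is the main technical step, I would show the converse: given any $z$ satisfying~\eqref{rlt3} and~\eqref{goal}, one can choose auxiliary values $z^S_{11}$ for all nonempty $S \subseteq M$ so that~\eqref{interSys} is satisfied. Writing $u_J$ for the right-hand side of~\eqref{interSys}, the natural candidate is the Möbius-type lift $z^S_{11} := \sum_{K \supseteq S} u_K$ for $S\neq\emptyset$. Plugging this into $\rho(1,J,M\setminus J) = \sum_{S \supseteq J}(-1)^{|S|-|J|} z^S_{11}$ and swapping the order of summation gives $\sum_{K \supseteq J} u_K \sum_{S: J \subseteq S \subseteq K}(-1)^{|S|-|J|}$; the inner sum equals $(1-1)^{|K|-|J|}$, which vanishes unless $K=J$, leaving exactly $u_J$. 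For nonempty $J$ the resulting identity $\rho(1,J,M\setminus J)=u_J$ is what is required; for $J=\emptyset$, the same computation combined with $z^{\emptyset}_{11}=z_{11}$ reduces to~\eqref{goal} itself, so the hypothesis is exactly what is needed to make the $J=\emptyset$ inequality hold.

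Finally, combining these three steps, I would conclude that projection onto the $z$-space of the set defined by~\eqref{rlt3}-\eqref{psd3} equals the set defined by~\eqref{rlt3} and~\eqref{goal}, and then quote proposition~5 of~\cite{DeyIda25} to identify this projection with the SOC relaxation introduced there. The main obstacle is the inclusion-exclusion bookkeeping in the converse direction, especially the need to handle $J=\emptyset$ separately: in that case the lifted variable collapses to the original $z_{11}$, so the chosen lift must be consistent with~\eqref{goal}, whereas for nonempty $J$ the lift is unconstrained and one simply verifies the identity mechanically.
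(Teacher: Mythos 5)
Your proposal is correct and follows essentially the same route as the paper: the same specialization of~\eqref{psd2} to $R=\emptyset$ and $R=\{1\}$, the same reduction of the $2\times 2$ LMIs to the ratio form~\eqref{interSys}, the same telescoping identity $\sum_{J\subseteq M}\rho(1,J,M\setminus J)=z_{11}$ for the forward direction, and the same M\"obius-type lift $z^S_{11}=\sum_{K\supseteq S}u_K$ with the $J=\emptyset$ case absorbing inequality~\eqref{goal} for the converse. No gaps.
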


For a graph $G=(V,E,L)$ there are exponentially  many choices for subsets $P$ and $M$ in~\cref{newrelaxation}. As before, denote by $G_{V^+}$ the subgraph of $G$ induced by $V^+$ as defined by~\eqref{vplus}. Again, denote by $V_c$, $c \in \C$ the connected components of $G_{V^+}$. From the proof of~\cref{naiveDecomp} it follows that an extended formulation for $\QP(G)$ is obtained by putting together formulations for $\PP(G'_c)$, $c \in \C \cup \{0\}$, where the node set of each hypergraph $G'_c$, $c \in \C$ consists of $V_c$ and all nodes in $V \setminus V_c$ that are adjacent to at least one node in $V_c$. Notice that the latter does not have any plus loops since $V_c$ is a connected component of $G_{V^+}$. Moreover, the hypergraph $G'_0$ has no plus loops.
As our proposed SDP relaxations are closely related to the extended formulation constructed in the proof of~\cref{naiveDecomp}, henceforth, we assume that each $P \subseteq V_c$ for some $c \in \C$ and that each node in $M$ is adjacent to at least some node in $P$. In the remainder of this paper, we refer to any such $P$ and $M$ as a
 \emph{plus set}, and  a \emph{minus set}, respectively.
 Note that $\PP(G'_0)$ coincides with the multilinear polytope $\MP(G'_0)$, whose facial structure has been extensively studied in the literature~\cite{dPKha17MOR,dPKha18SIOPT,dPKha23mMPA,dPKha25}.
 The following proposition establishes a simple criterion for selecting plus and minus sets that result in stronger relaxations.

\begin{proposition}\label{pmdominance}
    Let $G=(V,E,L)$ be a graph such that $V^+\neq \emptyset$, where $V^+$ is defined by~\eqref{vplus}. Then we have the following:
    \begin{itemize}
        \item [(i)] Let $P$ and $P'$ be two plus sets, and let $M$ be a corresponding minus set. If $P \subseteq P'$, then LMIs~\eqref{psd2} corresponding to $P, M$ are implied by LMIs~\eqref{psd2} corresponding to $P', M$.
        \item [(ii)] Let $M$ and $M'$ be two minus sets, and let $P$ be a corresponding plus set. If $M \subseteq M'$, then LMIs~\eqref{psd2} corresponding to $P, M$ are implied by LMIs~\eqref{psd2} corresponding to $P, M'$.
    \end{itemize}
\end{proposition}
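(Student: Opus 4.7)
The plan is to prove both parts by directly identifying how the LMI blocks in the two systems relate. For part~(i), enlarging $P$ to $P'$ allows us to enlarge $R$ accordingly while leaving $M_R$ unchanged, so each LMI in the $P,M$ system appears as a principal submatrix of one in the $P',M$ system. For part~(ii), the classical RLT splitting identity lifts to the $\ell$- and $\rho$-expressions and allows one to realize each LMI in the $P,M$ system as a sum of two LMIs in the $P,M'$ system.

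For part~(i), I would fix $R\subseteq P$ and $J\subseteq M_R$ and set $R':=R\cup (P'\setminus P)\subseteq P'$. A direct computation yields $M_{R'}=(M\cup P')\setminus R'=(M\cup P)\setminus R=M_R$, so $J\subseteq M_{R'}$ is a legitimate index in the $P',M$ system. Enumerating entries, the LMI~\eqref{psd2} indexed by $(R,J)$ in the $P,M$ system is precisely the principal submatrix of the LMI~\eqref{psd2} indexed by $(R',J)$ in the $P',M$ system supported on the rows and columns labeled by $\{0\}\cup R$. Since any principal submatrix of a PSD matrix is PSD, part~(i) follows.

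For part~(ii), I would reduce to the case $M'=M\cup\{k\}$ with $k\notin M\cup P$ by induction on $|M'\setminus M|$, noting that every intermediate set remains a valid minus set because $M\subseteq M'$ and every node of $M'$ is adjacent to some node of $P$. Fix $R\subseteq P$ and $J\subseteq M_R$ and compare the $(R,J)$ block in the $P,M$ system with the blocks in the $P,M'$ system indexed by $(R,J)$ and $(R,J\cup\{k\})$, where $M'_R\setminus J=(M_R\setminus J)\cup\{k\}$ and $M'_R\setminus(J\cup\{k\})=M_R\setminus J$. The polynomial factor identity $f(J_1,J_2)=f(J_1\cup\{k\},J_2)+f(J_1,J_2\cup\{k\})$ recalled in~\cref{remark:dmnc} lifts, via the linearizations~\eqref{rlteq} and~\eqref{newrels}, to
\[
\ell(J_1,J_2)=\ell(J_1\cup\{k\},J_2)+\ell(J_1,J_2\cup\{k\}),\qquad \rho(i,J_1,J_2)=\rho(i,J_1\cup\{k\},J_2)+\rho(i,J_1,J_2\cup\{k\}),
\]
and applied entry-wise yields that the $(R,J)$ block in the $P,M$ system equals the sum of the two $(P,M')$ blocks above. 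Since a sum of PSD matrices is PSD, part~(ii) follows.

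The main obstacle is purely bookkeeping: verifying in part~(ii) that the splitting identity applies uniformly to every entry of~\eqref{psd2}, both the off-diagonal $\ell$-entries $\ell(J\cup S,M_R\setminus J)$ with $|S|\leq 2$ and the diagonal $\rho$-entries $\rho(i,J,M_R\setminus J)$. The unifying observation is that every entry shares the same second argument $M_R\setminus J$, and the added element $k$ plays the role of the splitting variable for every entry simultaneously; once this is noted the matrix decomposition is immediate.
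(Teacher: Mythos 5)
Your proposal is correct and follows essentially the same route as the paper: part~(i) via the principal-submatrix property of PSD matrices (you additionally make explicit the choice $R'=R\cup(P'\setminus P)$, which keeps $M_{R'}=M_R$), and part~(ii) via reduction to $M'=M\cup\{k\}$ and the entry-wise lifting of the splitting identity $f(J_1,J_2)=f(J_1\cup\{k\},J_2)+f(J_1,J_2\cup\{k\})$ to the $\ell$- and $\rho$-linearizations, writing the $(R,J)$ block of the $P,M$ system as the sum of the $(R,J)$ and $(R,J\cup\{k\})$ blocks of the $P,M'$ system. No gaps.
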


\begin{proof}
    Part~(i) follows since all principal sub-matrices of a positive semidefinite matrix are positive semidefinite as well.
    Now consider Part~(ii); without loss of generality, let $M'=M \cup \{k\}$ for some $k \notin M \cup P$. 
    As before, for any $R \subseteq P$, define $M_R := (M \cup P) \setminus R$ and
    $M'_{R} := (M' \cup P) \setminus R = M_{R} \cup \{k\}$.
    Without loss of generality, let  $R:=\{1, \cdots, p\}$ for some $p \geq 0$.
    Then  for any $J \subseteq M_R$, we have:
\begin{align}
&{\footnotesize
\begin{bmatrix}
 f(J, M_{R}\setminus J)
  & f(J \cup\{1\}, M_{R}\setminus J)
  & \cdots
  &  f(J \cup \{p\}, M_{R}\setminus J) \\[6pt]
 f(J \cup\{1\}, M_{R}\setminus J)
  &  g(1, J, M_{R} \setminus J)
  & \cdots
  &  f(J \cup\{1,p\}, M_{R}\setminus J) \\[6pt]
 \vdots & \vdots & \ddots & \vdots \\[6pt]
 f(J \cup\{p\}, M_{R}\setminus J)
  &  f(J \cup\{1,p\}, M_{R}\setminus J)
  & \cdots
  &  g(p, J,M_{R} \setminus J)
\end{bmatrix}=}\nonumber\\
&{\footnotesize\begin{bmatrix}
 f(J \cup \{k\}, M'_{R}\setminus (J\cup\{k\}))
  &  f(J \cup\{1,k\}, M'_{R}\setminus (J\cup\{k\}))
  & \cdots
  &  f(J \cup \{p,k\}, M'_{R}\setminus (J\cup k)) \\[6pt]
 f(J \cup\{1,k\}, M'_{R}\setminus (J\cup k))
  &  g(1, J \cup \{k\},M'_{R} \setminus (J\cup \{k\}))
  & \cdots
  &  f(J \cup\{1,p,k\}, M'_{R}\setminus (J\cup k)) \\[6pt]
 \vdots & \vdots & \ddots & \vdots \\[6pt]
 f(J \cup\{p,k\}, M'_{R}\setminus (J\cup k))
  &  f(J \cup\{1,p,k\}, M'_{R}\setminus (J\cup k))
  & \cdots
  &  g(p, J\cup\{k\},M'_R \setminus (J\cup\{k\}))
\end{bmatrix}+}\nonumber\\
&{\footnotesize\begin{bmatrix}
 f(J, M'_{R}\setminus J)
  & f(J \cup\{1\}, M'_{R}\setminus J)
  & \cdots
  &  f(J \cup \{p\}, M'_{R}\setminus J) \\[6pt]
 f(J \cup\{1\}, M'_{R}\setminus J)
  &   g(1,J,M'_R \setminus J)
  & \cdots
  &  f(J \cup\{1,p\}, M'_{R}\setminus J) \\[6pt]
 \vdots & \vdots & \ddots & \vdots \\[6pt]
 f(J \cup\{p\}, M'_{R}\setminus J)
  &  f(J \cup\{1,p\}, M'_{R}\setminus J)
  & \cdots
  &  g(p,J,M'_{R} \setminus J)
\end{bmatrix}.}\nonumber
\end{align}
Using~\eqref{rlteq} and~\eqref{newrels} to linearize the polynomial factors, we obtain:
\begin{align}
& {\footnotesize\begin{bmatrix}
\ell(J, M_{R}\setminus J)
  & \ell(J \cup\{1\}, M_{R}\setminus J)
  & \cdots
  & \ell(J \cup \{p\}, M_{R}\setminus J) \\[6pt]
\ell(J \cup\{1\}, M_{R}\setminus J)
  & \rho(1, J,M_{R} \setminus J)
  & \cdots
  & \ell(J \cup\{1,p\}, M_{R}\setminus J) \\[6pt]
 \vdots & \vdots & \ddots & \vdots \\[6pt]
\ell(J \cup\{p\}, M_{R}\setminus J)
  & \ell(J \cup\{1,p\}, M_{R}\setminus J)
  & \cdots
  & \rho(p, J,M_{R}\setminus J)
\end{bmatrix}=}\nonumber\\
&{\footnotesize\begin{bmatrix}
\ell(J \cup \{k\}, M'_{R}\setminus (J\cup\{k\}))
  & \ell(J \cup\{1,k\}, M'_{R}\setminus (J\cup\{k\}))
  & \cdots
  & \ell(J \cup \{p,k\}, M'_{R}\setminus (J\cup k)) \\[6pt]
\ell(J \cup\{1,k\}, M'_{R}\setminus (J\cup k))
  & \rho(1, J \cup\{k\},M'_{R} \setminus (J \cup\{k\}))
  & \cdots
  & \ell(J \cup\{1,p,k\}, M'_{R}\setminus (J\cup k)) \\[6pt]
 \vdots & \vdots & \ddots & \vdots \\[6pt]
\ell(J \cup\{p,k\}, M'_{R}\setminus (J\cup k))
  & \ell(J \cup\{1,p,k\}, M'_{R}\setminus (J\cup k))
  & \cdots
  & \rho(p, J\cup\{k\}, M'_{R} \setminus (J \cup\{k\}))
\end{bmatrix}+}\nonumber\\
&{\footnotesize\begin{bmatrix}
\ell(J, M'_{R}\setminus J)
  & \ell(J \cup\{1\}, M'_{R}\setminus J)
  & \cdots
  & \ell(J \cup \{p\}, M'_{R}\setminus J) \\[6pt]
\ell(J \cup\{1\}, M'_{R}\setminus J)
  & \rho(1, J,M'_{R} \setminus J)
  & \cdots
  & \ell(J \cup\{1,p\}, M'_{R}\setminus J) \\[6pt]
 \vdots & \vdots & \ddots & \vdots \\[6pt]
\ell(J \cup\{p\}, M'_{R}\setminus J)
  & \ell(J \cup\{1,p\}, M'_{R}\setminus J)
  & \cdots
  & \rho(p, J,M'_{R} \setminus J)
\end{bmatrix}.}\nonumber
\end{align}
It then follows that the LMI~\eqref{psd2} for some $J \subseteq M_R$ is implied by two LMIs of the form~\eqref{psd2}, one for $J \subseteq M'_{R}$ and one for $J \cup \{k\} \subseteq M'_{R}$. Therefore, the statement of Part~(ii) follows.
\end{proof}

By~\cref{pmdominance}, the strongest SDP relaxation is obtained by
choosing $|\C|$ plus and minus sets, where for each $c \in \C$, the  plus set $P$ is the connected component $V_c$, and the minus set $M$ is $N(V_c)$, as defined by~\eqref{nghood}; namely, the set of all nodes without plus loops such that each node is adjacent to some node in $P$. However, it is important to note that, for a given choice of $P,M$, the system~\eqref{psd2} consists of $(\frac{|P|}{2}+1)2^{|M|+|P|}$ variables, $2^{|M|+|P|}$ linear inequalities, and $2^{|M|} (3^{|P|}-2^{|P|})$ LMIs.
The extended variables in this formulation are $z_p$ for all $p \subseteq M \cup P$ such that $|p| \geq 2$ and $p \notin E$ and $z^J_{ii}$ for all $J \subseteq M \cup P \setminus \{i\}$ and for all $i \in P$.
Therefore, to obtain a polynomial-size SDP relaxation, we must choose
$$
|M| \in O(\log_2(|V|)) \quad {\rm and} \quad |P| \in O(\log_3(|V|)).
$$
This mirrors the situation in the RLT hierarchy, where lower-level inequalities are implied by higher-level ones and the level-$d$ LP relaxation contains $2^d$ variables and inequalities.

\vspace{0.1in}

The following examples illustrate the application and strength of the proposed SDP relaxations for box-constrained QPs. In these examples, given a graph $G=(V,E)$ for notational simplicity, instead of $z_{\{ij\cdots k\}}$ for some $\{ij\cdots k\} \subseteq V$, we write $z_{ij\cdots k}$.

\begin{example}\label{exampl1}
Consider the graph $G=(V,E,L)$ with $V=\{1,2\}$, $E=\{\{1,2\}\}$, and $L=L^+=\{\{1,1\},\{2,2\}\}$. In this case, we have $P=V$ and $M=\emptyset$ and the proposed SDP relaxation for $\QP(G)$ is given by:
\begin{align}
\begin{split}\label{2p}
 & \begin{bmatrix}
1 & z_1 & z_2 \\
z_1 & z_{11} & z_{12} \\
z_2 & z_{12} &  z_{22}
\end{bmatrix} \succeq 0\\
&\begin{bmatrix}
z_2 & z_{12}  \\
z_{12} & z^{\{2\}}_{11}
\end{bmatrix} \succeq 0, \quad
\begin{bmatrix}
1-z_2 & z_1-z_{12}  \\
z_1-z_{12} & z_{11}-z^{\{2\}}_{11}
\end{bmatrix} \succeq 0\\
&\begin{bmatrix}
z_1 & z_{12}  \\
z_{12} & z^{\{1\}}_{22}
\end{bmatrix} \succeq 0, \quad
\begin{bmatrix}
1-z_1 & z_2-z_{12}  \\
z_2-z_{12} & z_{22}-z^{\{1\}}_{22}
\end{bmatrix} \succeq 0\\
&z_{12} \geq 0, \; z_1 -z_{12} \geq 0, \; z_2-z_{12} \geq 0, \;  1-z_1-z_2 + z_{12} \geq 0.
\end{split}
    \end{align}
We will prove in the next section that, as a direct consequence of the work by Anstreicher and Burer~\cite{AnsBur10}, the above relaxation is tight; \ie it is an extended formulation for $\QP(G)$ (see~\cref{2plus}).  Notice that this formulation contains two extra variables $z^{\{2\}}_{11}, z^{\{1\}}_{22}$.   $\diamond$
\end{example}

\begin{example}\label{example1p}
Consider the graph $G=(V,E,L)$ with $V=\{1,2,3\}$, $E=\{\{1,2\},\{1,3\},\{2,3\}\}$, and $L=L^+=\{\{1,1\},\{2,2\}\}$. In this case, we have $P=\{1,2\}$ and $M=\{3\}$, and the proposed SDP relaxation for $\QP(G)$ is given by:

\begin{align}\label{2p1n}
& \begin{bmatrix}
z_3 & z_{13} & z_{23} \\
z_{13} & z^{\{3\}}_{11} & z_{123} \\
z_{23} & z_{123} &  z_{22}^{\{3\}}
\end{bmatrix} \succeq 0,  \;
 \begin{bmatrix}
1-z_3 & z_1-z_{13} & z_2-z_{23} \\
z_1-z_{13} & z_{11}-z^{\{3\}}_{11} & z_{12}-z_{123} \\
z_2-z_{23} & z_{12}-z_{123} &  z_{22}-z^{\{3\}}_{22}
\end{bmatrix} \succeq 0\nonumber\\
 &  \begin{bmatrix}
z_{23} & z_{123} \\
z_{123} & z^{\{2,3\}}_{11}\end{bmatrix} \succeq 0, \;
\begin{bmatrix}
z_2-z_{23} & z_{12}-z_{123} \\
z_{12}-z_{123} & z^{\{2\}}_{11}-z^{\{2,3\}}_{11}\end{bmatrix} \succeq 0, \;
\begin{bmatrix}
z_3-z_{23} & z_{13}-z_{123} \\
z_{13}-z_{123} & z^{\{3\}}_{11}-z^{\{2,3\}}_{11}\end{bmatrix} \succeq 0, \nonumber\\
&\begin{bmatrix}
1-z_2-z_3+z_{23} & z_1-z_{12}-z_{13}+z_{123} \\
z_1-z_{12}-z_{13}+z_{123} & z_{11}-z^{\{2\}}_{11}-z^{\{3\}}_{11}+z^{\{2,3\}}_{11}\end{bmatrix} \succeq 0,
\nonumber\\
&  \begin{bmatrix}
z_{13} & z_{123} \\
z_{123} & z^{\{13\}}_{22}\end{bmatrix} \succeq 0, \;
\begin{bmatrix}
z_1-z_{13} & z_{12}-z_{123} \\
z_{12}-z_{123} & z^{\{1\}}_{22}-z^{\{1,3\}}_{22}\end{bmatrix} \succeq 0, \;
\begin{bmatrix}
z_3-z_{13} & z_{23}-z_{123} \\
z_{23}-z_{123} & z^{\{3\}}_{22}-z^{\{1,3\}}_{22}\end{bmatrix} \succeq 0, \nonumber\\
&\begin{bmatrix}
1-z_1-z_3+z_{13} & z_2-z_{12}-z_{23}+z_{123} \\
z_2-z_{12}-z_{23}+z_{123} & z_{22}-z^{\{1\}}_{22}-z^{\{3\}}_{22}+z^{\{1,3\}}_{22}\end{bmatrix} \succeq 0, \;
\nonumber\\
&z_{123}\geq 0, \; z_{12}-z_{123}\geq 0, \; z_{13}-z_{123} \geq 0, z_{23}-z_{123} \geq 0, z_1-z_{12}-z_{13}+z_{123} \geq 0\nonumber\\
&z_2-z_{12}-z_{23}+z_{123} \geq 0, z_3-z_{13}-z_{23}+z_{123}\geq 0, 1-z_1-z_2-z_3+z_{12}+z_{13}+z_{23}-z_{123} \geq 0.
\end{align}
We will prove in the next section that the above formulation is tight; \ie it is an extended formulation for $\QP(G)$ (see~\cref{th:2p}). To demonstrate the strength of the proposed relaxation compared to existing techniques, let us consider an instance of Problem~\ref{pQP} with $n=3$, defined by
\begin{align*}
& q_{11} = 5080, \;
q_{12} = -5849, \; q_{13} = 5767, \; q_{22} = 5,\; q_{23} =-1824, \; q_{33} =-40, \\
& c_1 = -254,\; c_2 =  1824, \; c_3= 37.
\end{align*}
Utilizing the above relaxation together with the inequality $z_{33} \leq z_3$, we find that the optimal solution of Problem~\ref{pQP} is given by $x_1=\frac{3}{5}$, $x_2=1.0$, $x_3 = 0.0$ and its optimal value is $-4.0$. If instead we use $\C_n^{\rm SDP+MC+Tri}$~\eqref{SDPMCTri} to convexify the problem, the optimal value of the resulting SDP is $-177.36$; that is, we obtain a very weak lower bound. Alternatively, we can use the relaxation proposed in~\cite{DeyIda25}, which for this example, by~\cref{pastRel}, can be equivalently written as
\begin{align*}
 &  \begin{bmatrix}
z_{23} & z_{123} \\
z_{123} & z^{\{2,3\}}_{11}\end{bmatrix} \succeq 0, \;
\begin{bmatrix}
z_2-z_{23} & z_{12}-z_{123} \\
z_{12}-z_{123} & z^{\{2\}}_{11}-z^{\{2,3\}}_{11}\end{bmatrix} \succeq 0, \;
\begin{bmatrix}
z_3-z_{23} & z_{13}-z_{123} \\
z_{13}-z_{123} & z^{\{3\}}_{11}-z^{\{2,3\}}_{11}\end{bmatrix} \succeq 0, \\
&\begin{bmatrix}
1-z_2-z_3+z_{23} & z_1-z_{12}-z_{13}+z_{123} \\
z_1-z_{12}-z_{13}+z_{123} & z_{11}-z^{\{2\}}_{11}-z^{\{3\}}_{11}+z^{\{2,3\}}_{11}\end{bmatrix} \succeq 0, \;
\\
&  \begin{bmatrix}
z_{13} & z_{123} \\
z_{123} & z^{\{13\}}_{22}\end{bmatrix} \succeq 0, \;
\begin{bmatrix}
z_1-z_{13} & z_{12}-z_{123} \\
z_{12}-z_{123} & z^{\{1\}}_{22}-z^{\{1,3\}}_{22}\end{bmatrix} \succeq 0, \;
\begin{bmatrix}
z_3-z_{13} & z_{23}-z_{123} \\
z_{23}-z_{123} & z^{\{3\}}_{22}-z^{\{1,3\}}_{22}\end{bmatrix} \succeq 0, \\
&\begin{bmatrix}
1-z_1-z_3+z_{13} & z_2-z_{12}-z_{23}+z_{123} \\
z_2-z_{12}-z_{23}+z_{123} & z_{22}-z^{\{1\}}_{22}-z^{\{3\}}_{22}+z^{\{1,3\}}_{22}\end{bmatrix} \succeq 0, \;
\\
&z_{123}\geq 0, \; z_{12}-z_{123}\geq 0, \; z_{13}-z_{123} \geq 0, z_{23}-z_{123} \geq 0, z_1-z_{12}-z_{13}+z_{123} \geq 0\\
&z_2-z_{12}-z_{23}+z_{123} \geq 0, z_3-z_{13}-z_{23}+z_{123}\geq 0, 1-z_1-z_2-z_3+z_{12}+z_{13}+z_{23}-z_{123} \geq 0.
\end{align*}
The optimal value of the above relaxation is given by $-4.53$.
This relaxation can be further strengthened by adding the LMI:
\begin{equation}\label{4by4}
\begin{bmatrix}
1 & z_1 & z_2 & z_3 \\
z_1 & z_{11} & z_{12} & z_{13} \\
z_2 & z_{12} &  z_{22} & z_{23} \\
z_3 & z_{13} & z_{23} & z_{33}
\end{bmatrix} \succeq 0.\\
\end{equation}
The optimal value of the resulting SDP is $-4.32$.
Therefore, while for this example, incorporating the inequalities proposed in~\cite{DeyIda25} leads to a significantly stronger relaxation than the popular relaxation~\eqref{SDPMCTri}, it still exhibits an $8\%$ relative gap. However, our proposed relaxation solves this problem exactly. $\diamond$
\end{example}

\begin{example}\label{exampl2}
Consider the graph $G=(V,E,L)$ with $V=\{1,2,3\}$, $E=\{\{1,2\},\{1,3\},\{2,3\}\}$, and $L=L^+=\{\{1,1\},\{2,2\},\{3,3\}\}$. In this case, we have $P=V$, $M=\emptyset$, and the proposed SDP relaxation for $\QP(G)$ is given by:
\begin{align*}
 & \begin{bmatrix}
1 & z_1 & z_2 & z_3 \\
z_1 & z_{11} & z_{12} & z_{13} \\
z_2 & z_{12} &  z_{22} & z_{23} \\
z_3 & z_{13} & z_{23} & z_{33}
\end{bmatrix} \succeq 0\\
& \begin{bmatrix}
z_k & z_{ik} & z_{jk} \\
z_{ik} & z^{\{k\}}_{ii} & z_{ijk} \\
z_{jk} & z_{ijk} &  z_{jj}^{\{k\}}
\end{bmatrix} \succeq 0,  \;
 \begin{bmatrix}
1-z_k & z_i-z_{ik} & z_j-z_{jk} \\
z_i-z_{ik} & z_{ii}-z^{\{k\}}_{ii} & z_{ij}-z_{ijk} \\
z_j-z_{jk} & z_{ij}-z_{ijk} &  z_{jj}-z^{\{k\}}_{jj}
\end{bmatrix} \succeq 0, \; \forall i, j, k \in \{1,2,3\}: i < j, \; i \neq k,\; j \neq k\\
 &
\begin{bmatrix}
z_{jk} & z_{ijk} \\
z_{ijk} & z^{\{j,k\}}_{ii}\end{bmatrix} \succeq 0, \;
\begin{bmatrix}
z_j-z_{jk} & z_{ij}-z_{ijk} \\
z_{ij}-z_{ijk} & z^{\{j\}}_{ii}-z^{\{j,k\}}_{ii}\end{bmatrix} \succeq 0, \;
\begin{bmatrix}
z_k-z_{jk} & z_{ik}-z_{ijk} \\
z_{ik}-z_{ijk} & z^{\{k\}}_{ii}-z^{\{j,k\}}_{ii}\end{bmatrix} \succeq 0, \\
&\begin{bmatrix}
1-z_j-z_k+z_{jk} & z_i-z_{ij}-z_{ik}+z_{ijk} \\
z_i-z_{ij}-z_{ik}+z_{ijk} & z_{ii}-z^{\{j\}}_{ii}-z^{\{k\}}_{ii}+z^{\{j,k\}}_{ii}\end{bmatrix} \succeq 0,
 \; \forall i, j, k \in \{1,2,3\}: j < k, \; i \neq j, \; i \neq k\\
&z_{123}\geq 0, \; z_{12}-z_{123}\geq 0, \; z_{13}-z_{123} \geq 0, z_{23}-z_{123} \geq 0, z_1-z_{12}-z_{13}+z_{123} \geq 0\\
&z_2-z_{12}-z_{23}+z_{123} \geq 0, z_3-z_{13}-z_{23}+z_{123}\geq 0, 1-z_1-z_2-z_3+z_{12}+z_{13}+z_{23}-z_{123} \geq 0,
\end{align*}
where we define $z_{ji} := z_{ij}$ for $1\leq i < j \leq 3$ and $z_{ijk} := z_{123}$, for all $i \neq j \neq k \in \{1,2,3\}$.
To demonstrate the strength of the proposed relaxation compared to existing techniques, let us consider an instance of Problem~\ref{pQP} with $n=3$, defined by
\begin{align*}
& q_{11} = 8, \; q_{12} = 2732, \;
q_{13} = -4923, \; q_{22} = 11, \; q_{23} =-5960, \; q_{33} = 3500,\\
& c_1 = 2,  \; c_2 = 140,  \; c_3 = 4523.
\end{align*}
For this example, the above formulation turns out to be tight and the optimal solution is given by $x_1=1$, $x_2 = 0$, $x_3= \frac{2}{35}$ with the optimal value $-\frac{10}{7} \approx -1.43$. If instead we use $\C_n^{\rm SDP+MC+Tri}$~\eqref{SDPMCTri} to convexify the problem, we obtain a lower bound of $-173.93$, which as in the previous example is very weak. Alternatively, we can use the relaxation proposed in~\cite{DeyIda25}, which for this example by~\cref{pastRel}, can be equivalently written as
\begin{align*}
  &  \begin{bmatrix}
z_{jk} & z_{ijk} \\
z_{ijk} & z^{\{jk\}}_{ii}\end{bmatrix} \succeq 0, \;
\begin{bmatrix}
z_j-z_{jk} & z_{ij}-z_{ijk} \\
z_{ij}-z_{ijk} & z^{\{j\}}_{ii}-z^{\{j,k\}}_{ii}\end{bmatrix} \succeq 0, \;
\begin{bmatrix}
z_k-z_{jk} & z_{ik}-z_{ijk} \\
z_{ik}-z_{ijk} & z^{\{k\}}_{ii}-z^{\{j,k\}}_{ii}\end{bmatrix} \succeq 0, \\
&\begin{bmatrix}
1-z_j-z_k+z_{jk} & z_i-z_{ij}-z_{ik}+z_{ijk} \\
z_i-z_{ij}-z_{ik}+z_{ijk} & z_{ii}-z^{\{j\}}_{ii}-z^{\{k\}}_{ii}+z^{\{j,k\}}_{ii}\end{bmatrix} \succeq 0, \;
\forall i, j, k \in \{1,2,3\}: j < k, \; i \neq j, \; i \neq k\\
&z_{123}\geq 0, \; z_{12}-z_{123}\geq 0, \; z_{13}-z_{123} \geq 0, z_{23}-z_{123} \geq 0, z_1-z_{12}-z_{13}+z_{123} \geq 0\\
&z_2-z_{12}-z_{23}+z_{123} \geq 0, z_3-z_{13}-z_{23}+z_{123}\geq 0, 1-z_1-z_2-z_3+z_{12}+z_{13}+z_{23}-z_{123} \geq 0,
\end{align*}
Utilizing the above relaxation, we obtain a lower bound of $-2.93$. We can further strengthen the above relaxation by adding LMI~\eqref{4by4} to it.
The resulting SDP yields a lower bound of $-2.06$. Therefore, the strongest existing relaxation for this example results in about $44\%$ gap, whereas our proposed relaxation is tight.

For a complete graph with three nodes and three plus loops, we leave it as an open question whether our proposed SDP relaxation gives an extended formulation for $\QP(G)$. $\diamond$
\end{example}

In~\cite{DeyIda25}, the authors consider the special case with $|P|=1$, and prove that if $|M|=1$, then the proposed convex relaxation is implied by $\C_n^{\rm SDP+MC}$~\eqref{SDPMC}, while if $|M| > 1$, then the proposed relaxation is not implied by $\C_n^{\rm SDP+MC+Tri}$~\eqref{SDPMCTri}. From~\cref{pmdominance}, it then follows that if $|M| > 1$ and $|P| \geq 1$, our proposed SDP relaxation is not implied by existing relaxations. Hence, it remains to analyze the case with $|M| \in \{0,1\}$ and $|P| >1$. The following proposition provides a complete comparison between our relaxation and the existing ones.

\begin{proposition}\label{compareSDP}
    Consider a graph $G=(V, E, L)$ and
    consider the convex set defined by LMIs~\eqref{psd2} for some plus and minus sets denoted by $P$ and $M$, respectively.
    Denote by $\S_{P,M}$ the projection of this set onto the space $z_l$, $l \in V \cup E \cup L$. Define $p:=|P|$ and $m:=|M|$.
    We have the following cases:
    \medskip

    \begin{enumerate}
        \item  if $p + m \leq 2$, then $\S_{P,M}$ is implied by the relaxation $\C^{\rm SDP+MC}_n$ defined by~\eqref{SDPMC}.

        \item  if $p \geq 1$ and $p+m \geq 3$, then $\S_{P,M}$ is not implied by the relaxation $\C^{\rm SDP+MC+Tri}_n$ defined by~\eqref{SDPMCTri}.
    \end{enumerate}
\end{proposition}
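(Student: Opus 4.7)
The proof naturally decomposes into the two cases of the statement; the overall strategy is to identify the minimal sub-cases in $(p, m)$ that need to be analyzed and then either derive the claimed implication algebraically (Case~1) or exhibit an explicit separating point (Case~2).

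For Case~1 ($p + m \leq 2$), the convention that each node in $M$ is adjacent to a node of $P$ forces $m = 0$ whenever $p = 0$, so the relevant sub-cases are $(p, m) \in \{(0, 0), (1, 0), (1, 1), (2, 0)\}$. The cases $(0, 0)$ and $(1, 0)$ are trivial, since in the latter the only LMI in~\eqref{psd2} is $\begin{bmatrix} 1 & z_i \\ z_i & z_{ii} \end{bmatrix} \succeq 0$, a principal sub-matrix of the SDP block in $\C_n^{\rm SDP+MC}$. Sub-case $(1, 1)$ is handled by Observation~\ref{pastRel}: the projection of $\S_{P, M}$ onto the $z$-space coincides with the convex relaxation of~\cite{DeyIda25} whenever $|P| = 1$, and the cited result in~\cite{DeyIda25} establishes that this projection is implied by $\C_n^{\rm SDP+MC}$ when $|M| = 1$.

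The substantive work in Case~1 is sub-case $(2, 0)$, say $P = \{i, j\}$ and $M = \emptyset$. Here~\eqref{psd2} consists of (i) the $3 \times 3$ LMI indexed by $R = P$, which is a principal sub-matrix of the SDP block in $\C_n^{\rm SDP+MC}$; (ii) the McCormick inequalities (from $R = \emptyset$); and (iii) two symmetric pairs of $2 \times 2$ LMIs indexed by $R = \{i\}$ and $R = \{j\}$, involving the extended variables $z^{\{j\}}_{ii}$ and $z^{\{i\}}_{jj}$. Eliminating $z^{\{j\}}_{ii}$ via Fourier--Motzkin from its two $2 \times 2$ LMIs yields the single quadratic inequality
\[
z_j(1 - z_j)\, z_{ii} \;\geq\; (1 - z_j)\, z_{ij}^2 \;+\; z_j\, (z_i - z_{ij})^2.
\]
The plan is to verify by direct expansion that this inequality follows from the $2 \times 2$ Schur complement $(z_{ij} - z_i z_j)^2 \leq (z_{ii} - z_i^2)(z_{jj} - z_j^2)$, which is implied by the SDP constraint in $\C_n^{\rm SDP+MC}$, together with the diagonal McCormick bound $z_{jj} \leq z_j$, which yields $z_{jj} - z_j^2 \leq z_j(1 - z_j)$. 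A symmetric argument dispatches $z^{\{i\}}_{jj}$. The main obstacle here is the algebraic bookkeeping, in particular handling the boundary $z_j \in \{0, 1\}$, where direct division in the Schur complement degenerates and must be replaced by a brief continuity argument.

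For Case~2 ($p \geq 1$ and $p + m \geq 3$), I would first invoke the monotonicity in Proposition~\ref{pmdominance}: any point $z \in \C_n^{\rm SDP+MC+Tri} \setminus \S_{P_0, M_0}$ with $P_0 \subseteq P$ and $M_0 \subseteq M$ lies in $\C_n^{\rm SDP+MC+Tri} \setminus \S_{P, M}$. It therefore suffices to exhibit a separating point in the minimal sub-cases $(p, m) \in \{(1, 2), (2, 1), (3, 0)\}$; these three cases cover all remaining $(p, m)$ with $p \geq 1$ and $p + m \geq 3$ up to embedding the corresponding small sub-configuration of nodes into $G$, with the remaining components of $z$ set to zero. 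The $(1, 2)$ case is exactly the regime addressed in~\cite{DeyIda25}, where a separating point is constructed explicitly. For $(2, 1)$ and $(3, 0)$, Examples~\ref{example1p} and~\ref{exampl2} already exhibit concrete QP instances on which $\C_n^{\rm SDP+MC+Tri}$ yields a strictly larger optimal value than $\S_{P, M}$, whose tightness in those instances follows from~\cref{th:2p,2plus}; any $\C_n^{\rm SDP+MC+Tri}$-optimizer of these instances therefore furnishes the required separating point.
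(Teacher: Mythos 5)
Your proposal is correct in substance but takes a genuinely different route from the paper, and the comparison is instructive. For Part~1 the paper disposes of all sub-cases in one line: since $|P\cup M|\le 2$ and $\S_{P,M}$ is a valid relaxation of $\QP$ restricted to those two nodes, the equality $\QP_2=\C_2^{\rm SDP+MC}$ from~\cite{AnsBur10} (\cref{AnsBurConv}) immediately gives the implication -- any valid relaxation on two variables is implied by the exact convex hull. Your explicit Fourier--Motzkin elimination in the $(2,0)$ sub-case reproves a fragment of this from scratch; the algebra is right (indeed $(z_{ii}-z_i^2)z_j(1-z_j)\ge (z_{ij}-z_iz_j)^2$ rearranges exactly to $z_j(1-z_j)z_{ii}\ge z_{ij}^2+z_i^2z_j-2z_iz_jz_{ij}$, which is the eliminated inequality), and what it buys is a self-contained certificate that does not lean on the Anstreicher--Burer convex-hull theorem; what it costs is the boundary bookkeeping at $z_j\in\{0,1\}$ and the need to also argue that the two extended variables $z^{\{j\}}_{ii}$ and $z^{\{1\}}_{jj}$ can be chosen independently (they can, since they appear in disjoint LMIs). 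For Part~2 both proofs reduce to minimal configurations via \cref{pmdominance}, but the base cases differ: the paper obtains $(2,1)$ from the known $(1,2)$ counterexample by promoting a plus-looped node of $M$ into $P$, and its two bullets cover $m\ge 2$ and $m=1$ only; your inclusion of $(3,0)$ as a separate base case, certified by \cref{exampl2}, actually covers the $m=0$, $p\ge 3$ regime more explicitly than the paper's own argument does.

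Two soft spots to repair. First, your claim that tightness of the instance in \cref{exampl2} ``follows from \cref{th:2p}'' is wrong: that theorem covers two plus loops, and the paper explicitly leaves open whether the relaxation is tight for a complete graph with three plus loops. Fortunately tightness is not needed -- you only need the optimal value over $\S_{P,M}$ to strictly exceed the optimal value over $\C_n^{\rm SDP+MC+Tri}$, so drop the tightness claim. Second, resting the separation on the numerically reported objective values of \cref{example1p,exampl2} is not yet a proof; to close it you should exhibit an explicit feasible point of $\C_n^{\rm SDP+MC+Tri}$ together with a dual certificate (or a provable lower bound) showing it violates the projected constraints of $\S_{P,M}$, as is done for the $(1,2)$ case in~\cite{DeyIda25}.
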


\begin{proof}
    Part~1 follows from theorem~2 of~\cite{AnsBur10} stating that $\QP_n = \C_n^{\rm SDP+MC}$ for $n=2$.
    Now consider Part~2; in Proposition~7 of~\cite{DeyIda25}, the authors proved that if $p=1$ and $m \geq 2$, then $\S_{P,M}$ is not implied by the relaxation $\C^{\rm SDP+MC+Tri}_n$. This result has the following implications:
    \smallskip
    \begin{itemize}
    \item from Part~(i) of~\cref{pmdominance} it follows that $\S_{P,M}$ is not implied by $\C^{\rm SDP+MC+Tri}_n$ for $p \geq 1$ and $m \geq 2$.
    \item since $\S_{P,M}$ is not implied by $\C^{\rm SDP+MC+Tri}_n$ for $p=1$ and $m=2$, from the definition of relaxation~\eqref{psd2}
    it immediately follows that the same statement holds for $p=2$ and $m=1$.
    Therefore, by Part~(ii) of~\cref{pmdominance} the same statement holds for $p > 2$ and $m=1$.
\end{itemize}
\smallskip
    The proof of Part~(ii) then follows from the two cases above.
\end{proof}

We conclude this section by remarking that if we ignore the sparsity of Problem~\ref{pQP}, by~\cref{pmdominance},  the strongest relaxation is obtained by letting $P=V$.
In this case, the resulting SDP relaxation is stronger than both conventional RLT and SDP relaxations for $|V| \geq 3$. Namely, letting $R=\emptyset$, inequalities~\eqref{psd2} simplifies to level-$|V|$ RLT inequalities $\ell(J, V \setminus J) \geq 0$ for all $J \subseteq V$, and letting $R=V$,  inequalities~\eqref{psd2} simplifies to the standard LMI $Y\succeq xx^\top$. By~\cref{compareSDP}, the LMIs obtained by letting $R \subsetneq V$ are not implied by $Y\succeq xx^\top$ for $p=  p+m = n + 0 \geq 3$. Therefore, in its strongest form, relaxation~\eqref{psd2} is strictly stronger than both RLT and SDP relaxations. However, the resulting SDP relaxation is of exponential size. Therefore, we exploit the sparsity of Problem~\ref{pQP} to choose plus and minus sets $P,M$ carefully, hence, controlling the size of the relaxation.

\section{SDP-representability of $\QP(G)$}
\label{sec: convexhull}
In this section, we obtain a sufficient condition in terms of the structure of the graph $G$ under which $\QP(G)$ is SDP-representable.
Recall that a set is \emph{SDP-representable} if it is the projection of a higher-dimensional set defined by LMIs.
We first show that if $G$ consists of two adjacent nodes with plus loops, then the proposed SDP relaxation is an extended formulation for $\QP(G)$. This result is a consequence of the following well-known result of Burer and Anstreicher~\cite{AnsBur10}:

\begin{proposition}[theorem~2 in~\cite{AnsBur10}]\label{AnsBurConv}
Consider the set
\begin{equation}\label{KS}
\C = \Big\{(z_1,z_2,z_{11},z_{12},z_{22}): z_{11}= z^2_1, \; z_{12}=z_1z_2, \; z_{22}= z^2_2,\; z_1, z_2 \in [0,1]^2\Big\}.
\end{equation}
Then the convex hull of $\C$ is defined by the following inequalities:
    \begin{align*}
       & \begin{bmatrix}
1 & z_1 & z_2 \\
z_1 & z_{11} & z_{12} \\
z_2 & z_{12} &  z_{22}
\end{bmatrix} \succeq 0 \\
& z_{11} \leq z_1, \; z_{22} \leq z_2\\
& z_{12} \geq 0, \; z_1 -z_{12} \geq 0, \; z_2 -z_{12} \geq 0,  \; 1-z_1-z_2+ z_{12} \geq 0.
    \end{align*}
\end{proposition}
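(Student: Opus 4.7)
The plan is to establish two inclusions. The forward direction, $\C \subseteq \mathrm{RHS}$, is routine: on any point of $\C$, the four McCormick inequalities and the upper bounds $z_{11} \leq z_1,\ z_{22} \leq z_2$ hold because $z_1, z_2 \in [0,1]$ and $z_{ij} = z_i z_j$; and the $3\times 3$ LMI is witnessed by the rank-one matrix $(1, z_1, z_2)^\top (1, z_1, z_2)$. Since all defining constraints are convex, they persist under convex combinations, giving $\conv(\C) \subseteq \mathrm{RHS}$.

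For the reverse inclusion, I would take an arbitrary feasible point $(\bar z_1, \bar z_2, \bar z_{11}, \bar z_{12}, \bar z_{22})$ of the described set and construct an explicit convex decomposition into points of $\C$. Applying the Schur complement, the LMI becomes $\bar Y \succeq \bar z \bar z^\top$, where $\bar z := (\bar z_1, \bar z_2)^\top$ and $\bar Y$ is the associated $2\times 2$ second-moment matrix. Setting $E := \bar Y - \bar z \bar z^\top \succeq 0$, I would split by $\mathrm{rank}(E) \in \{0, 1, 2\}$. When $\mathrm{rank}(E) = 0$, the point is already in $\C$ (noting that McCormick forces $\bar z \in [0,1]^2$). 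When $\mathrm{rank}(E) = 1$, write $E = \lambda w w^\top$ and use a two-atom symmetric decomposition $\bar z \pm s w$ with equal weights; the McCormick inequalities together with the diagonal upper bounds are exactly what guarantee that $s$ can be chosen so that both atoms lie in $[0,1]^2$.

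The main obstacle is the full-rank case, where no two-atom decomposition can match the moments. Here I would diagonalize $E = \lambda_1 w_1 w_1^\top + \lambda_2 w_2 w_2^\top$ with $w_1 \perp w_2$ and attempt a four-atom symmetric decomposition $\bar z \pm s_i w_i$, choosing weights and step sizes to match the mean $\bar z$ and the residual covariance $E$. Whenever some candidate atom escapes $[0,1]^2$, I would invoke a boundary-pushing argument: move along a feasibility-preserving direction that strictly reduces $\mathrm{tr}(E)$ until either $\mathrm{rank}(E)$ drops (returning to the handled cases) or a diagonal upper bound or McCormick inequality becomes tight. In the latter event, the corresponding coordinate becomes effectively $\{0,1\}$-valued in any admissible decomposition, and the problem collapses to the Boolean quadric polytope $\BQP_2$, whose convex hull is described exactly by the McCormick inequalities~\eqref{McCor}, allowing one to finish with at most four atoms at the corners of $[0,1]^2$. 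The delicate technical step will be certifying that a suitable feasibility-preserving direction always exists, which can be obtained via a Farkas-type alternative theorem applied to the tangent cone of the PSD+McCormick relaxation.
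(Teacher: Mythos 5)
First, a point of reference: the paper does not prove this statement at all --- it is imported verbatim as Theorem~2 of Anstreicher and Burer \cite{AnsBur10}, so there is no in-paper proof to compare against. Your forward inclusion is fine, and reducing the reverse inclusion to constructing atomic decompositions (equivalently, to showing every extreme point of the relaxation lies in $\C$) is the right general strategy. But as written the argument has two genuine gaps.

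The first is in the rank-one case. With \emph{equal} weights the symmetric decomposition $\bar z \pm s w$ forces $s=\sqrt{\lambda}$ (there is nothing left to choose), and the resulting atoms need not lie in $[0,1]^2$: take $\bar z=(1/4,0)$, $\bar z_{11}=1/4$, $\bar z_{12}=\bar z_{22}=0$. This point satisfies every constraint in the proposition ($E=\bar Y-\bar z\bar z^\top=\mathrm{diag}(3/16,0)$, $z_{11}\le z_1$ holds with equality), yet $\bar z_1-\sqrt{3/16}<0$, so one of your two atoms leaves the box. The diagonal bound only gives $E_{11}\le \bar z_1(1-\bar z_1)$, which is weaker than the bound $E_{11}\le\min(\bar z_1,1-\bar z_1)^2$ that an equal-weight split requires. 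The fix is an \emph{asymmetric} two-point decomposition along $w$ with weights $p,1-p$ and steps $s_+,s_-$ matching the first and second moments (here: atoms $(0,0)$ and $(1,0)$ with weights $3/4$ and $1/4$), but then one must actually verify that the McCormick and diagonal constraints certify feasibility of both atoms, which you have not done. The second and more serious gap is the rank-two case, which is where the entire content of the theorem lives (recall the analogous claim already fails for $n=3$, so the argument must exploit something special about $n=2$). Your ``boundary-pushing'' step does not, as stated, produce a convex decomposition of the \emph{original} point: moving $(\bar z,\bar Y)$ along a single feasibility-preserving direction and decomposing the perturbed point says nothing about the point you started from. To make this rigorous you would need to move in \emph{both} directions $\pm d$ while staying feasible, express the original point as a convex combination of the two endpoints, and induct --- i.e., reduce to extreme points of the (compact) relaxation and characterize them via the active constraints. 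The existence of such a direction $d$ at every non-extreme point with $\mathrm{rank}(E)=2$ is precisely the hard step; invoking ``a Farkas-type alternative on the tangent cone'' names the tool but does not carry out the case analysis on which PSD/McCormick/diagonal constraints are active, which is where Anstreicher and Burer's actual proof spends its effort. Finally, the claim that a tight McCormick or diagonal constraint ``collapses the problem to $\BQP_2$'' is correct only for the coordinate whose diagonal bound is tight (since $\sum_k p_k a_{k,1}(a_{k,1}-1)=0$ with each term nonpositive forces $a_{k,1}\in\{0,1\}$); a tight McCormick inequality alone does not force integrality of either coordinate, so that reduction also needs justification.
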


We then have the following result:
\begin{lemma}\label{2plus}
Consider the graph $G=(V,E,L)$ with $V=\{1,2\}$, $E=\{\{1,2\}\}$, and $L=L^+=\{\{1,1\},\{2,2\}\}$. Then $\QP(G)$ is defined by the following inequalities:
    \begin{align}
       & \begin{bmatrix}
1 & z_1 & z_2 \\
z_1 & z_{11} & z_{12} \\
z_2 & z_{12} &  z_{22}
\end{bmatrix} \succeq 0 \label{ok}\\
& z_{11} \geq  \frac{z^2_{12}}{z_2}+\frac{(z_1-z_{12})^2}{1-z_2}, \quad z_{22} \geq  \frac{z^2_{12}}{z_1}+\frac{(z_2-z_{12})^2}{1-z_1}\label{useful}\\
&z_{12} \geq 0, \; z_1-z_{12} \geq 0, \; z_2-z_{12} \geq 0, \;  1-z_1-z_2+ z_{12} \geq 0. \label{ok2}
    \end{align}
\end{lemma}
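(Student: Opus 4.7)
My approach leverages the fact that $\QP(G)$ differs from the Anstreicher--Burer set $\conv(\C)$ of~\cref{AnsBurConv} only through the relaxation of $z_{11}=z_1^2$ and $z_{22}=z_2^2$ to one-sided inequalities. Indeed, applying~\cref{recCone} to this particular graph shows that the extreme points of $\QP(G)$ coincide with the set $\C$ and that ${\rm rec}(\QP(G))=\{(0,0,s_{11},0,s_{22}) : s_{11},s_{22}\geq 0\}$, so $\QP(G)=\conv(\C)+{\rm rec}(\QP(G))$. This reduces the lemma to understanding how the Anstreicher--Burer description must be relaxed once $z_{11}$ and $z_{22}$ are free to grow along these recession directions.

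For the validity direction, the McCormick inequalities~\eqref{ok2} and the PSD constraint~\eqref{ok} are inherited from $\conv(\C)$: \eqref{ok2} does not involve $z_{11},z_{22}$, and in~\eqref{ok} adding nonnegative multiples of $e_2 e_2^\top$ and $e_3 e_3^\top$ preserves positive semidefiniteness. For~\eqref{useful}, I interpret any point of $\QP(G)$ as moments of $(X,Y)\in[0,1]^2$ satisfying $E[X^2]\leq z_{11}$ and $E[Y^2]\leq z_{22}$; applying Cauchy--Schwarz to the pairs $(X\sqrt{Y},\sqrt{Y})$ and $(X\sqrt{1-Y},\sqrt{1-Y})$ gives $E[X^2Y]\geq z_{12}^2/z_2$ and $E[X^2(1-Y)]\geq (z_1-z_{12})^2/(1-z_2)$; summing yields the first inequality of~\eqref{useful}, and the second is symmetric.

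For the tightness direction, given $(z_1,z_2,z_{11},z_{12},z_{22})$ satisfying~\eqref{ok}--\eqref{ok2}, I set $\tilde z_{11}:=\min(z_1,z_{11})$ and $\tilde z_{22}:=\min(z_2,z_{22})$ and aim to verify that $(z_1,z_2,\tilde z_{11},z_{12},\tilde z_{22})\in\conv(\C)$ via~\cref{AnsBurConv}; the original point then lies in $\QP(G)$ by adding the recession vector $(0,0,z_{11}-\tilde z_{11},0,z_{22}-\tilde z_{22})$. The McCormick inequalities and the bounds $\tilde z_{ii}\leq z_i$ hold by construction/assumption, so the remaining task is to verify the $3\times 3$ PSD condition with $\tilde z_{11},\tilde z_{22}$ in place of $z_{11},z_{22}$. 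By the Schur complement this reduces to $(\tilde z_{11}-z_1^2)(\tilde z_{22}-z_2^2)\geq (z_{12}-z_1z_2)^2$, which I resolve by a case analysis on the signs of $z_{11}-z_1$ and $z_{22}-z_2$: when both are nonpositive the condition is precisely~\eqref{ok}; when exactly one is positive, the corresponding diagonal entry becomes $z_i(1-z_i)$ and the condition is equivalent to the matching inequality of~\eqref{useful} (since, e.g., $(z_{22}-z_2^2)z_1(1-z_1)\geq(z_{12}-z_1z_2)^2$ is algebraically equivalent to the second inequality of~\eqref{useful}); and when both are positive, the condition reduces to $z_1(1-z_1)z_2(1-z_2)\geq(z_{12}-z_1z_2)^2$.

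The main obstacle is this last case, since neither~\eqref{ok} nor~\eqref{useful} is used directly. The key observation is that by~\eqref{ok2} the four quantities $z_{12},\,z_1-z_{12},\,z_2-z_{12},\,1-z_1-z_2+z_{12}$ are nonnegative and sum to one, and hence define a joint probability distribution on $\{0,1\}^2$ whose first and second moments are $z_1,z_2,z_{12}$. The required inequality is then exactly the Cauchy--Schwarz bound $\mathrm{Cov}(X,Y)^2\leq\mathrm{Var}(X)\mathrm{Var}(Y)$ for this distribution. With this observation in hand the case analysis closes all possibilities, and equality of $\QP(G)$ with the set described by~\eqref{ok}--\eqref{ok2} follows.
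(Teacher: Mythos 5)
Your proof is correct, and it shares the paper's backbone --- the Minkowski-sum decomposition $\QP(G)=\conv(\C)\oplus{\rm rec}(\QP(G))$, which reduces everything to the Anstreicher--Burer description of \cref{AnsBurConv} --- but it executes the projection step quite differently. The paper eliminates the two slack variables $\alpha,\beta$ by a Fourier--Motzkin-style pairwise combination of the principal-minor inequalities, obtaining \eqref{ok}--\eqref{ok2} as the surviving consequences and discarding the redundant ones; you instead prove the two inclusions separately, which is arguably more transparent. On the validity side, your moment/Cauchy--Schwarz derivation of \eqref{useful} (splitting $E[X^2]=E[X^2Y]+E[X^2(1-Y)]$) replaces a computation the paper gets for free from the elimination. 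On the tightness side, your explicit witness $\tilde z_{ii}=\min(z_i,z_{ii})$ makes the case structure that is implicit in the paper's elimination completely concrete, and your treatment of the ``both capped'' case --- reading the McCormick quantities as a probability distribution on $\{0,1\}^2$ and invoking $\mathrm{Cov}(X,Y)^2\le\mathrm{Var}(X)\mathrm{Var}(Y)$ --- is a clean substitute for the paper's direct algebraic verification that inequality \eqref{redundant} follows from \eqref{ok2} by multiplying pairs of McCormick inequalities. One small point to add for completeness: positive semidefiniteness of the $2\times2$ Schur complement requires not only the determinant condition you analyze but also the diagonal conditions $\tilde z_{11}\ge z_1^2$ and $\tilde z_{22}\ge z_2^2$; these do hold (from \eqref{ok} when $\tilde z_{ii}=z_{ii}$, and from $z_i\in[0,1]$ --- itself a consequence of \eqref{ok2} --- when $\tilde z_{ii}=z_i$), and the degenerate cases $z_i\in\{0,1\}$ in \eqref{useful} should be noted as being covered by the $\frac{0}{0}:=0$ convention together with McCormick forcing $z_{12}=z_1z_2$ there, but neither issue affects the substance of the argument.
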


\begin{proof}
    Define the sets
    $$
    \C_{+} = \{(z_1,z_2,z_{11},z_{12},z_{22}): z_{11}\geq z^2_1, \; z_{12}=z_1z_2, \; z_{22}\geq z^2_2,\; z_1, z_2 \in [0,1]^2\},
    $$
    and
    $$
    \C_{\infty}:=\{(z_1,z_2,z_{11},z_{12},z_{22}): z_1=z_2=z_{12}=0, z_{11}\geq 0, z_{22}\geq 0\}.
    $$
    We then have that
    $\C_{+}=\C \oplus \C_{\infty}$, where the set $\C$ is defined by~\eqref{KS} and $\oplus$ denotes the Minkowski sum of sets. This, in turn, implies that $\QP(G)=\conv(\C_{+})=\conv(\C_{\infty}) \oplus \conv(\C)$. From~\cref{AnsBurConv} it follows that:
    \begin{align*}
    \QP(G) = \Big\{(z_1,z_2,z_{11}+\alpha,z_{12},z_{22}+\beta): & \begin{bmatrix}
1 & z_1 & z_2 \\
z_1 & z_{11} & z_{12} \\
z_2 & z_{12} &  z_{22}
\end{bmatrix} \succeq 0,
z_{11} \leq z_1, \; z_{22} \leq z_2,\; z_{12} \geq 0,
 \; z_2-z_{12} \geq 0, \\
& z_1-z_{12} \geq 0,  \; 1-z_1-z_2+ z_{12} \geq 0, \alpha \geq 0, \beta \geq 0\Big\}.
\end{align*}
Defining $z'_{11} = z_{11}+\alpha$, $z'_{22} = z_{22}+\beta$,  rewriting the positive semidefiniteness condition in terms of the nonnegativity of the principal minors and using Lemma~2 in~\cite{AntoAida24} to remove the redundant principal minor inequality, we conclude that $\QP(G)$ is obtained by projecting out variables $\alpha$ and $\beta$ from the following system:
\begin{align*}
& z_{12} \geq 0, \; z_1-z_{12} \geq 0, \; z_2-z_{12} \geq 0,  \;  1-z_1-z_2+ z_{12} \geq 0\\
& z'_{11}-\alpha \geq z^2_1, \; z'_{22}-\beta \geq z^2_2\\
&(z'_{11}-\alpha - z^2_1)(z'_{22}-\beta- z^2_2) \geq (z_{12}-z_1z_2)^2\\
&z'_{11}-\alpha \leq z_1, \; z'_{22}-\beta \leq z_2\\
&\alpha \geq 0, \; \beta \geq 0.
\end{align*}
Notice that the inequalities in the first line of the above system coincide with McCormick inequalities~\eqref{ok2}.
To project out $\alpha, \beta$ from the remaining inequalities,
first, consider the inequalities containing $\alpha$:
\begin{align}
& z'_{11}-\alpha \geq z^2_1\label{alfa1}\\
&(z'_{11}-\alpha - z^2_1)(z'_{22}-\beta- z^2_2) \geq (z_{12}-z_1z_2)^2\label{alfa2}\\
&z'_{11}-\alpha \leq z_1\label{alfa3}\\
&\alpha \geq 0 \label{alfa4}.
\end{align}
Projecting out $\alpha$ from~\eqref{alfa1} and~\eqref{alfa3} yields
$z^2_1\leq z_1$, which is implied by the McCormick inequalities~\eqref{ok2}. Projecting out $\alpha$ from~\eqref{alfa1} and~\eqref{alfa4} we get
\begin{equation}\label{need1}
z'_{11} \geq z^2_1.
\end{equation}
Projecting out $\alpha$ from~\eqref{alfa2} and~\eqref{alfa3} gives
$$
(z_1 - z^2_1)(z'_{22}-\beta- z^2_2) \geq (z_{12}-z_1z_2)^2,
$$
while projecting out $\alpha$ from~\eqref{alfa2} and~\eqref{alfa4} gives
$$
(z'_{11}- z^2_1)(z'_{22}-\beta- z^2_2) \geq (z_{12}-z_1z_2)^2.
$$
Next, consider the remaining inequalities containing $\beta$:
\begin{align}
    & z'_{22}-\beta \geq z^2_2\label{beta1}\\
    & z'_{22}-\beta \leq z_2\label{beta2}\\
    & (z_1 - z^2_1)(z'_{22}-\beta- z^2_2) \geq (z_{12}-z_1z_2)^2\label{beta3}\\
    & (z'_{11}- z^2_1)(z'_{22}-\beta- z^2_2) \geq (z_{12}-z_1z_2)^2\label{beta4}\\
    & \beta \geq 0. \label{beta5}
\end{align}
Projecting out $\beta$ from~\eqref{beta1} and~\eqref{beta2} yields $z^2_2\leq z_2$, which is implied by inequalities~\eqref{ok2}. Projecting out $\beta$ from~\eqref{beta1} and~\eqref{beta5} yields
\begin{equation}\label{need2}
z'_{22} \geq z^2_2.
\end{equation}
Projecting out $\beta$ from~\eqref{beta2} and~\eqref{beta3} yields:
\begin{equation}\label{redundant}
(z_1 - z^2_1)(z_2- z^2_2) \geq (z_{12}-z_1z_2)^2.
\end{equation}
We now show that inequality~\eqref{redundant} is implied by the McCormick inequalities~\eqref{ok2}. First, suppose that $z_{12}-z_1z_2 \geq 0$. In this case, by $z_{12} \leq z_1$, we have $z_{12}-z_1z_2 \leq z_1(1-z_2)$, while by  $z_{12} \leq z_2$, we have $z_{12}-z_1z_2 \leq z_2(1-z_1)$. Multiplying the two inequalities and using $z_{12}-z_1z_2 \geq 0$, we obtain~\eqref{redundant}. Next, suppose that $z_{12}-z_1z_2 \leq 0$. In this case, by $z_{12} \geq z_1+z_2-1$, we have $z_1z_2 -z_{12}\leq z_1 z_2-z_1-z_2+1$, while by $z_{12} \geq 0$, we have $z_{1}z_{2}-z_{12} \leq z_1 z_2$. Multiplying these two inequalities and using $z_1z_2-z_{12} \geq 0$, we obtain~\eqref{redundant}.

Projecting out $\beta$ from~\eqref{beta2} and~\eqref{beta4} yields
$(z'_{11}- z^2_1)(z_2- z^2_2) \geq (z_{12}-z_1z_2)^2$. It can be checked that, this inequality can be equivalently written as the first inequality in~\eqref{useful}. Projecting out $\beta$ from~\eqref{beta3} and~\eqref{beta5} gives $(z_1 - z^2_1)(z'_{22}- z^2_2) \geq (z_{12}-z_1z_2)^2$. Again, it can be checked that, this inequality can be equivalently written as the second inequality in~\eqref{useful}. Finally, projecting out $\beta$ from~\eqref{beta4} and~\eqref{beta5}, we obtain
\begin{equation}\label{need3}
(z'_{11}- z^2_1)(z'_{22}- z^2_2) \geq (z_{12}-z_1z_2)^2.
\end{equation}
By Lemma~2 in~\cite{AntoAida24}, the three inequalities~\eqref{need1},~\eqref{need2}, and~\eqref{need3} are equivalent to LMI~\eqref{ok}. Therefore, the statement follows.
\end{proof}

\begin{observation}\label{uselater}
It can be checked that, inequalities~\eqref{ok}-~\eqref{ok2} can be obtained by projecting out variables $z^{\{2\}}_{11}, z^{\{1\}}_{22}$ from system~\eqref{2p}. Namely, the proposed SDP relaxation is an extended formulation for $\QP(G)$ in this case.
    Moreover, notice that inequalities~\eqref{useful} are not redundant. To see this, consider the point $\tilde z_1 =\tilde z_2=\frac{1}{2}$, $\tilde z_{11}=\frac{1}{3}$, $\tilde z_{12} = 0$, $\tilde z_{22}=1$. It is simple to check that this point satisfies~\eqref{ok} and~\eqref{ok2}.
    Now consider the first inequality in~\eqref{useful}. Substituting $\tilde z$ in this inequality we obtain:
    $\frac{1}{3} \not\geq 0 +\frac{(\frac{1}{2})^2}{\frac{1}{2}} = \frac{1}{2}$.
\end{observation}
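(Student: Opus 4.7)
The plan is to reduce to the equality case handled by \cref{AnsBurConv}. Let $\C$ denote the set defined by \eqref{KS}, and let $\C_\infty := \{(0,0,z_{11},0,z_{22}) : z_{11}, z_{22}\geq 0\}$ be the natural recession cone in which only $z_{11}$ and $z_{22}$ inflate. Then the unconvexified feasible set of $\QP(G)$ is exactly $\C_+ := \C \oplus \C_\infty$, where $\oplus$ denotes Minkowski sum. Because $\C_\infty$ is already a convex cone, convexification commutes with the Minkowski sum, yielding $\QP(G) = \conv(\C) \oplus \C_\infty$. \cref{AnsBurConv} supplies a clean algebraic description of $\conv(\C)$, so an extended formulation of $\QP(G)$ is obtained by introducing nonnegative slacks $\alpha, \beta \geq 0$ with $z'_{11} = z_{11} + \alpha$ and $z'_{22} = z_{22} + \beta$, where $(z_1, z_2, z_{11}, z_{12}, z_{22})$ satisfies the Anstreicher--Burer system.

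The work then reduces to projecting $\alpha$ and $\beta$ out of this extended system. The constraints involving $\alpha$ are $z'_{11} - \alpha \geq z_1^2$ (from the $(1,1)$ principal minor of the Anstreicher--Burer LMI), $z'_{11} - \alpha \leq z_1$, and $\alpha \geq 0$, together with the quadratic condition $(z'_{11} - \alpha - z_1^2)(z'_{22} - \beta - z_2^2) \geq (z_{12} - z_1 z_2)^2$. Since the quadratic expression is monotone decreasing in $\alpha$, Fourier--Motzkin-style elimination remains valid: each upper bound on $\alpha$ is paired with each lower bound. Eliminating $\alpha$ and then $\beta$ yields the following inequalities (in addition to the McCormick ones, which carry through unchanged): $z'_{11}\geq z_1^2$, $z'_{22}\geq z_2^2$, the ``diagonal'' PSD condition $(z'_{11}-z_1^2)(z'_{22}-z_2^2)\geq (z_{12}-z_1 z_2)^2$, the two asymmetric quadratic inequalities that make up \eqref{useful}, one leftover inequality $(z_1 - z_1^2)(z_2 - z_2^2) \geq (z_{12} - z_1 z_2)^2$, and the trivial bounds $z_1^2\leq z_1$ and $z_2^2\leq z_2$ (which are immediate from \eqref{ok2}).

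The main obstacle is then twofold: showing that the leftover inequality $(z_1 - z_1^2)(z_2 - z_2^2) \geq (z_{12} - z_1 z_2)^2$ is redundant given \eqref{ok2}, and reassembling the three quadratic conditions $z'_{11}\geq z_1^2$, $z'_{22}\geq z_2^2$, $(z'_{11}-z_1^2)(z'_{22}-z_2^2)\geq (z_{12}-z_1z_2)^2$ into the single LMI \eqref{ok}. For the redundancy, I would split on the sign of $z_{12} - z_1 z_2$: when it is nonnegative, multiply $z_{12} - z_1 z_2 \leq z_1(1 - z_2)$ (from $z_{12}\leq z_1$) by $z_{12} - z_1 z_2 \leq z_2(1 - z_1)$ (from $z_{12}\leq z_2$); when it is nonpositive, multiply $z_1 z_2 - z_{12} \leq z_1 z_2$ (from $z_{12}\geq 0$) by $z_1 z_2 - z_{12} \leq (1 - z_1)(1 - z_2)$ (from $z_{12}\geq z_1+z_2-1$). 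In both cases the product yields the leftover inequality. For the reassembly, the three quadratic conditions encode exactly the nonnegativity of the principal minors of the symmetric matrix in \eqref{ok}, which by Lemma~2 of \cite{AntoAida24} is equivalent to its positive semidefiniteness. Combining these two reductions with the surviving McCormick inequalities delivers precisely \eqref{ok}--\eqref{ok2}, completing the argument.
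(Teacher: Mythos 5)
Your argument re-derives \cref{2plus} --- the fact that \eqref{ok}--\eqref{ok2} describe $\QP(G)$ via the Minkowski-sum decomposition $\C_{+} = \C \oplus \C_{\infty}$, the Anstreicher--Burer description of $\conv(\C)$, and Fourier--Motzkin elimination of the slacks $\alpha,\beta$ --- but that is the paper's \emph{preceding lemma}, which the observation already takes as given. The observation itself asserts something you never address: that the system \eqref{2p}, which lives in the space augmented by the variables $z^{\{2\}}_{11}$ and $z^{\{1\}}_{22}$, projects exactly onto \eqref{ok}--\eqref{ok2}. The missing step is the elimination of these two auxiliary variables: from the pair of $2\times 2$ LMIs involving $z^{\{2\}}_{11}$ one reads off $z^{\{2\}}_{11} \geq z_{12}^2/z_2$ and $z_{11}-z^{\{2\}}_{11} \geq (z_1-z_{12})^2/(1-z_2)$, and these admit a common value of $z^{\{2\}}_{11}$ if and only if $z_{11} \geq z_{12}^2/z_2 + (z_1-z_{12})^2/(1-z_2)$, which is the first inequality of \eqref{useful}; the symmetric argument handles $z^{\{1\}}_{22}$, and the $3\times 3$ LMI and the last line of \eqref{2p} are literally \eqref{ok} and \eqref{ok2}. (This is the $|P|=1$ projection mechanism of \cref{pastRel} specialized to a singleton $M$.) Only after this projection does \cref{2plus} let you conclude that \eqref{2p} is an extended formulation for $\QP(G)$.

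You also omit the second claim of the observation entirely: that the inequalities \eqref{useful} are not redundant given \eqref{ok} and \eqref{ok2}. The paper certifies this with the explicit point $\tilde z_1 = \tilde z_2 = \tfrac{1}{2}$, $\tilde z_{11}=\tfrac{1}{3}$, $\tilde z_{12}=0$, $\tilde z_{22}=1$, which satisfies \eqref{ok} and \eqref{ok2} but violates the first inequality of \eqref{useful} since $\tfrac{1}{3} < \tfrac{1}{2}$. Without such a witness (or some other separation argument) the non-redundancy assertion is unproven. So while the Fourier--Motzkin computation you carry out is essentially the paper's proof of \cref{2plus} and is correct as such, it proves a different statement; the two items that actually constitute the observation are both missing.
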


We make use of the following lemma to establish our next convex hull characterization.
\begin{lemma}\label{little}
    Let $\C, S^0, S^1$ be three convex sets such that $\C= \conv(S^0 \cup S^1)$. Suppose that $\bar S^0$ and $\bar S^1$ are extended formulations for $S^0$ and $S^1$, respectively. Then $\bar C:= \conv(\bar S^0 \cup \bar S^1)$ is an extended formulation for $\C$.
\end{lemma}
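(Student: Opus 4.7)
The plan is to verify directly that the linear projection of $\bar C$ onto the ambient space of $\C$ coincides with $\C$. First, I would, without loss of generality, embed both $\bar S^0$ and $\bar S^1$ in a common extended space by padding coordinates with zeros, so that $\bar S^0,\bar S^1\subseteq \R^{n+r}$ for some $r$, where $\C,S^0,S^1\subseteq\R^n$. Let $\pi:\R^{n+r}\to\R^n$ denote the projection that drops the last $r$ coordinates; by hypothesis that $\bar S^i$ is an extended formulation of $S^i$, we have $\pi(\bar S^i)=S^i$ for $i=0,1$.

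Next, I would invoke two elementary identities about $\pi$. First, any map distributes over unions, so $\pi(A\cup B)=\pi(A)\cup\pi(B)$ for any sets $A,B$. Second, since $\pi$ is a linear map, it commutes with the convex hull operation: $\pi(\conv(X))=\conv(\pi(X))$ for any set $X$, obtained by applying linearity directly to convex combinations. Combining these identities with the hypothesis $\C=\conv(S^0\cup S^1)$ yields the chain
$$
\pi(\bar C)=\pi\bigl(\conv(\bar S^0\cup\bar S^1)\bigr)=\conv\bigl(\pi(\bar S^0\cup\bar S^1)\bigr)=\conv\bigl(\pi(\bar S^0)\cup\pi(\bar S^1)\bigr)=\conv(S^0\cup S^1)=\C,
$$
which is precisely the statement that $\bar C$ is an extended formulation of $\C$.

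The only step that merits any verification is the identity $\pi(\conv(X))=\conv(\pi(X))$; it follows immediately by writing a generic convex combination $\sum_i\lambda_i x_i\in\conv(X)$ and applying linearity of $\pi$ to obtain $\sum_i\lambda_i\pi(x_i)\in\conv(\pi(X))$, and conversely starting from an element of $\conv(\pi(X))$ and lifting each summand back to a preimage in $X$. No closedness assumptions are required, since the paper's definition of extended formulation does not demand closedness and convex hulls (as opposed to closed convex hulls) commute with linear maps unconditionally. There is no deeper obstacle here; the proof is essentially a bookkeeping exercise with the definitions.
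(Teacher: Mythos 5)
Your proof is correct and takes essentially the same route as the paper's: both arguments reduce to the two facts that a linear projection commutes with unions and with convex hulls, applied to $\bar C=\conv(\bar S^0\cup\bar S^1)$. Your explicit padding of the two extended spaces to a common ambient dimension is a harmless extra precaution that the paper leaves implicit.
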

\begin{proof}
Suppose that the sets $\C, S^0, S^1$ lie in the space $x \in \R^n$. Since $\bar S^0$ and $\bar S^1$ are extended formulations for $S^0$ and $S^1$, we can write $S_0 = \proj_x \bar S_0$ and $S_1 = \proj_x \bar S_1$, where $\proj_x$ denotes the projection onto the $x$ space. We then have:
\begin{align*}
    C =& \conv((\proj_x \bar S_0)\cup(\proj_x \bar S_1))\\
     =& \conv(\proj_x(\bar S_0 \cup \bar S_1))\\
     =& \proj_x \conv(\bar S_0 \cup \bar S_1)\\
     =&  \proj_x \bar C,
\end{align*}
where the second equality follows since  projection commutes with union, and the third equality follows since projection commutes with taking the convex hull.
\end{proof}

The next result implies that if the graph $G$ has two plus loops, then the proposed SDP relaxation is an extended formulation for $\QP(G)$.

\begin{theorem}\label{th:2p}
Let $G=(V,E,L)$ be a complete hypergraph and $L=L^+=\{\{i,i\},\{j,j\}\}$ for some $i\neq j \in V$. Consider the convex set $\PP(G)$ defined by~\eqref{extset}. Then an extended formulation for $\PP(G)$ with $|V|+|E|-1$ additional variables is given by:
\begin{align}
    &{\footnotesize
    \begin{bmatrix}
      \ell(J, V\setminus(J \cup \{i,j\})) &
      \ell(J \cup\{i\}, V\setminus(J \cup \{i,j\})) &
      \ell(J \cup \{j\}, V\setminus(J \cup \{i,j\}))\\[6pt]
      \ell(J \cup\{i\}, V\setminus(J \cup \{i,j\})) &
      \rho(i,J,V \setminus (J\cup\{i,j\})) &
      \ell(J \cup\{i,j\}, V\setminus (J\cup\{i,j\})) \\[6pt]
      \ell(J \cup\{j\}, V\setminus (J\cup\{i,j\})) &
      \ell(J \cup\{i,j\}, V\setminus(J\cup\{i,j\})) &
      \rho(j,J, V\setminus (J\cup\{i,j\}))
    \end{bmatrix}
    }
    \succeq 0, \; \forall J \subseteq V \setminus \{i,j\} \label{convLMI1}\\
    &{\footnotesize
    \begin{bmatrix}
      \ell(J, V\setminus(J \cup \{i\})) &
      \ell(J \cup\{i\}, V\setminus(J \cup \{i\})) \\[6pt]
      \ell(J \cup\{i\}, V\setminus(J \cup \{i\})) &
      \rho(i,J,V \setminus (J\cup\{i\}))
    \end{bmatrix}
    }
    \succeq 0, \; \forall J \subseteq V \setminus \{i\} \label{convLMI2}\\
    &{\footnotesize
    \begin{bmatrix}
      \ell(J, V\setminus(J \cup \{j\})) &
      \ell(J \cup\{j\}, V\setminus(J \cup \{j\})) \\[6pt]
      \ell(J \cup\{j\}, V\setminus(J \cup \{j\})) &
      \rho(j,J,V \setminus (J\cup\{j\}))
    \end{bmatrix}
    }
    \succeq 0, \; \forall J \subseteq V \setminus \{j\} \label{convLMI3}\\
   & \ell(J, V \setminus J) \geq 0, \quad \forall J \subseteq V. \label{convLast}
\end{align}
\end{theorem}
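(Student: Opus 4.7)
The plan is to show containment in both directions. The ``$\supseteq$'' inclusion — that the system~\eqref{convLMI1}--\eqref{convLast} is a convex relaxation of $\PP(G)$ — follows immediately from~\cref{newrelaxation} applied with $P=\{i,j\}$ and $M=V\setminus\{i,j\}$: the four families correspond precisely to the LMIs~\eqref{psd2} with $R=\{i,j\}$, $R=\{i\}$, $R=\{j\}$, and $R=\emptyset$. The auxiliary variables are $z^J_{ii}$ for nonempty $J\subseteq V\setminus\{i\}$ together with $z^J_{jj}$ for nonempty $J\subseteq V\setminus\{j\}$, for a total of $2(2^{|V|-1}-1)=|V|+|E|-1$ additional variables (using $|E|=2^{|V|}-|V|-1$ for a complete hypergraph), matching the stated count.

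For the reverse inclusion I would use a disintegration argument. Fix a feasible $\hat z$ and, for each $J\subseteq V\setminus\{i,j\}$, assign the weight $w_J:=\ell(J,V\setminus(J\cup\{i,j\}))$. A double application of the telescoping identity $f(A,B)=f(A\cup\{k\},B)+f(A,B\cup\{k\})$ of~\cref{remark:dmnc}, applied with $k=i$ and then $k=j$, writes $w_J$ as a sum of four terms of the form $\ell(J',V\setminus J')$ which are each $\geq 0$ by~\eqref{convLast}; so $w_J\geq 0$, and the standard Möbius identity $\sum_{J\subseteq B}f(J,B\setminus J)=1$ with $B=V\setminus\{i,j\}$ gives $\sum_J w_J=1$ after linearization. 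For each $J$ with $w_J>0$ I then define the conditional point $\tilde z^J$ in the extended space of the two-node formulation~\eqref{2p} by dividing the relevant quantities by $w_J$, setting in particular $\tilde z^{J,\{j\}}_{ii}:=\rho(i,J\cup\{j\},V\setminus(J\cup\{i,j\}))/w_J$ and the symmetric $\tilde z^{J,\{i\}}_{jj}$.

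The technical heart of the argument is the verification that $\tilde z^J$ satisfies every LMI and linear inequality of~\eqref{2p}. The $3\times3$ LMI is exactly~\eqref{convLMI1} at $J$ after scaling; the $2\times 2$ LMI $\begin{bmatrix} z_j & z_{ij}\\ z_{ij} & z^{\{j\}}_{ii}\end{bmatrix}\succeq 0$ is~\eqref{convLMI2} at $J\cup\{j\}$; and its complement $\begin{bmatrix} 1-z_j & z_i-z_{ij}\\ z_i-z_{ij} & z_{ii}-z^{\{j\}}_{ii}\end{bmatrix}\succeq 0$ is~\eqref{convLMI2} at $J$, obtained entry-by-entry from the identity $\ell(A,B\cup\{k\})=\ell(A,B)-\ell(A\cup\{k\},B)$ (and its analogue for $\rho$). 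The remaining two LMIs come symmetrically from~\eqref{convLMI3} at $J\cup\{i\}$ and $J$, and the McCormick inequalities reduce to instances of~\eqref{convLast}. By~\cref{exampl1} together with~\cref{uselater}, system~\eqref{2p} is an extended formulation of $\PP(G_{\{i,j\}})$, so the projection of $\tilde z^J$ lies in $\PP(G_{\{i,j\}})$.

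To conclude, I use~\cref{recCone}(ii)--(iii) to write this projection as $\sum_m\lambda^J_m(\alpha^J_m,\beta^J_m,(\alpha^J_m)^2,\alpha^J_m\beta^J_m,(\beta^J_m)^2)+r^J$ with $\sum_m\lambda^J_m=1$ and $r^J$ a loop-direction recession vector; since $\sum_m \lambda^J_m=1$, absorbing $r^J$ into each summand realizes $\tilde z^J$ as a convex combination of points of $\PP(G_{\{i,j\}})$. Each such point lifts canonically to $\PP(G)$ by declaring $z_k=1$ for $k\in J$ and $z_k=0$ for $k\in V\setminus(J\cup\{i,j\})$ and computing all product and squared-times-product coordinates accordingly. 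I then verify $\hat z=\sum_J w_J\cdot(\text{lift})$ coordinate by coordinate via Möbius inversions such as
\[ \sum_{J\supseteq T}\rho(i,J,V\setminus(J\cup\{i,j\}))=\sum_{S\supseteq T}z^S_{ii}\sum_{T\subseteq J\subseteq S}(-1)^{|S|-|J|}=z^T_{ii},\]
and the analogous collapses for each $z_e$ and each $z^T_{jj}$. The main obstacle is precisely this LMI-matching step: each matrix entry in~\eqref{2p} for $\tilde z^J$ has to be recognized as a rescaling of the corresponding entry of a specific instance of~\eqref{convLMI1}--\eqref{convLMI3}, with the telescope of~\cref{remark:dmnc} invoked at every entry; once this correspondence is in place, both the weight analysis and the Möbius-based recombination are routine counting.
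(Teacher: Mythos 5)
Your proposal is correct in substance but follows a genuinely different route from the paper. The paper proves tightness by induction on $|M|=|V\setminus\{i,j\}|$: it peels off one node $k$ at a time, writes $\PP(G)=\conv(\PP^0(G)\cup\PP^1(G))$ for the faces $z_k=0$ and $z_k=1$ (justified by \cref{recCone}), invokes the induction hypothesis on the smaller complete hypergraph, applies standard disjunctive programming, and then explicitly projects out the two disjunctive copies of all variables using the telescoping identities $\ell^0+\ell^1=\ell$ and $\rho^0+\rho^1=\rho$; the base case $M=\emptyset$ is \cref{2plus} together with \cref{uselater}, exactly as in your argument. You instead perform the full $2^{|M|}$-atom disjunction in one shot: the weights $w_J=\ell(J,V\setminus(J\cup\{i,j\}))$ form a probability distribution over the Boolean patterns of $V\setminus\{i,j\}$, each conditional point is shown to satisfy the two-node system~\eqref{2p} by matching its five LMIs entry-by-entry with \eqref{convLMI1} at $J$, \eqref{convLMI2} at $J$ and $J\cup\{j\}$, and \eqref{convLMI3} at $J$ and $J\cup\{i\}$ (these identifications are correct, via $\ell(A,B\cup\{k\})=\ell(A,B)-\ell(A\cup\{k\},B)$ and its $\rho$-analogue), and the recombination is a M\"obius inversion. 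What the paper's induction buys is mechanical safety: each step only needs a two-term telescope and the correctness of Balas' construction, at the cost of a long explicit elimination. What your disintegration buys is directness and transparency of the conditioning structure, at the cost of having to verify the LMI dictionary for all five matrices at once and to treat degenerate atoms.

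One loose end you should tie up: you only define the conditional point for $J$ with $w_J>0$, but when $w_J=0$ the PSD constraint \eqref{convLMI1} at $J$ forces only the first row and column to vanish, not the diagonal entries $\rho(i,J,V\setminus(J\cup\{i,j\}))$ and $\rho(j,J,V\setminus(J\cup\{i,j\}))$, which may remain strictly positive. Consequently $\sum_{J:w_J>0}w_J\,\tilde z^J_{ii}$ can fall short of $z_{ii}=\sum_{J}\rho(i,J,V\setminus(J\cup\{i,j\}))$. The leftover mass is a nonnegative multiple of the recession direction of $\PP(G)$ in the $z_{ii}$ (and $z_{jj}$) coordinate identified in \cref{recCone}(iii), so it can be absorbed exactly as you absorb $r^J$; but this needs to be said, since otherwise the coordinate-by-coordinate identity $\hat z=\sum_J w_J\cdot(\text{lift})$ is not literally what you have. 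With that addition, and noting that only the $\PP(G)$-coordinates (not the auxiliary $z^T_{ii}$, $z^T_{jj}$) need to be matched in the final recombination, the argument is complete.
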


\begin{proof}
First, observe that system~\eqref{convLMI1}-~\eqref{convLast} is obtained from system~\eqref{psd2} by defining $P:=\{i,j\}$ and $M:= V \setminus \{i,j\}$. Namely, LMI~\eqref{convLMI1} is obtained by letting $R = P$, LMI~\eqref{convLMI2} is obtained by letting $R = \{i\}$, LMI~\eqref{convLMI3} is obtained by letting $R = \{j\}$, and the RLT inequalities~\eqref{convLast} are obtained by letting $R = \emptyset$. Therefore, system~\eqref{convLMI1}-~\eqref{convLast} is a valid convex relaxation for $\PP(G)$. We next prove that this system provides an extended formulation for $\PP(G)$.

We prove by induction on the number of nodes in $M$. In the base case, we have $M = \emptyset$, implying that $V=P=\{i,j\}$.
Using the identities $\ell(\emptyset, \emptyset) = 1$, $\ell(i,\emptyset) = z_i$, $\rho(i,\emptyset, \emptyset) = z_{ii}$, it can be checked that,  in this case system~\eqref{convLMI1}-~\eqref{convLast} simplifies to system~\eqref{2p} (after replacing $V=\{1,2\}$ with $V=\{i,j\}$). By~\cref{2plus} and~\cref{uselater}, system~\eqref{2p} is an extended formulation for $\PP(G)$.

Henceforth, let $|M| \geq 1$. Let $k \in M$.
Let $\PP^0(G)$ (resp. $\PP^1(G)$) denote the face of $\PP(G)$ defined by $z_{k} = 0$ (resp. $z_{k} = 1$).
Since by part~(i) of~\cref{recCone}, $\PP(G)$ is a closed set, by part~(ii) of~\cref{recCone} at every extreme point of $\PP(G)$ we have $z_{k} \in \{0,1\}$, and by part~(iii) of~\cref{recCone} at every extreme direction of $\PP(G)$ we have $z_k = 0$, it follows that:
\begin{equation}\label{convdisj}
\PP(G) = \conv(\PP^0(G) \cup \PP^1(G)).
\end{equation}
Let $\bar G = (V \setminus \{k\}, \bar E, L)$ be a complete hypergraph, and let $L=L^+=\{\{i,i\},\{j, j\}\}$. Since $\bar G$ is a complete hypergraph with two plus loops and with one fewer node in $M$ than the hypergraph $G$, by the induction hypothesis, an extended formulation for $\PP(\bar G)$ is given by:
\begin{align*}
    &{\footnotesize
    \begin{bmatrix}
      \ell(J, V\setminus(J \cup \{i,j,k\})) &
      \ell(J \cup\{i\}, V\setminus(J \cup \{i,j,k\})) &
      \ell(J \cup \{j\}, V\setminus(J \cup \{i,j,k\}))\\[6pt]
      \ell(J \cup\{i\}, V\setminus(J \cup \{i,j,k\})) &
      \rho(i,J,V \setminus (J\cup\{i,j,k\})) &
      \ell(J \cup\{i,j\}, V\setminus (J\cup\{i,j,k\})) \\[6pt]
      \ell(J \cup\{j\}, V\setminus (J\cup\{i,j,k\})) &
      \ell(J \cup\{i,j\}, V\setminus(J\cup\{i,j,k\})) &
      \rho(j,J, V\setminus (J\cup\{i,j,k\}))
    \end{bmatrix}
    }
    \succeq 0, \\
    & \qquad \forall J \subseteq V \setminus \{i,j,k\}\\
    &{\footnotesize
    \begin{bmatrix}
      \ell(J, V\setminus(J \cup \{i,k\})) &
      \ell(J \cup\{i\}, V\setminus(J \cup \{i,k\})) \\[6pt]
      \ell(J \cup\{i\}, V\setminus(J \cup \{i,k\})) &
      \rho(i,J,V \setminus (J\cup\{i,k\}))
    \end{bmatrix}
    }
    \succeq 0, \; \forall J \subseteq V \setminus \{i,k\}\\
    &{\footnotesize
    \begin{bmatrix}
      \ell(J, V\setminus(J \cup \{j,k\})) &
      \ell(J \cup\{j,k\}, V\setminus(J \cup \{j,k\})) \\[6pt]
      \ell(J \cup\{j,k\}, V\setminus(J \cup \{j,k\})) &
      \rho(j,J,V \setminus (J\cup\{j,k\}))
    \end{bmatrix}
    }
    \succeq 0, \; \forall J \subseteq V \setminus \{j,k\}\\
   & \ell(J, V \setminus (J \cup \{k\})) \geq 0, \quad \forall J \subseteq V \setminus \{k\}.
\end{align*}
Denote by $\bar z$ the vector consisting of all variables $z_v$, $v \in V \setminus \{k\}$, $z_e$ for all $e \in \bar E$, $z^J_{ii}$ for all $J \subseteq V \setminus \{i,k\}$ and $z^J_{jj}$ for all $J \subseteq V \setminus \{j,k\}$, where as before we define $z^{\emptyset}_{ii} := z_{ii}$ and $z^{\emptyset}_{jj} := z_{jj}$.
It then follows that
\begin{align*}
 \PP^0(G) = \Big\{&z \in \R^{V \cup E \cup L}: \exists \; z^J_{ii}, J \subseteq V \setminus \{i\}, J \neq \emptyset, z_{jj}^{J}, J \subseteq V \setminus \{j\}, J \neq \emptyset \; {\rm such \; that} \;
 z_{k} = 0, z_e = 0, \\
 & \forall e \in E: e \ni k, z^J_{ii} = 0, \forall J \subseteq V \setminus \{i\}: J \ni k, \ z^J_{jj} = 0, \forall J \subseteq V \setminus \{j\}: J \ni k, \bar z \in \PP(\bar G)\Big\},\\
 \PP^1(G) = \Big\{&z \in \R^{V \cup E \cup L}: \exists \; z^J_{ii}, J \subseteq V \setminus \{i\}, J \neq \emptyset, z_{jj}^{J}, J \subseteq V \setminus \{j\}, J \neq \emptyset \; {\rm such \; that} \;  z_{k} = 1, z_e = z_{e \setminus \{k\}}, \\
 & \forall e \in E: e \ni k,z^J_{ii}=z^{J\setminus \{k\}}_{ii}, \forall J \subseteq V \setminus \{i\}: J \ni k, z^J_{jj}=z^{J\setminus \{k\}}_{jj}, \forall J \subseteq V\setminus \{j\}: J \ni k,\\
 &\bar z \in \PP(\bar G)\Big\}.
\end{align*}
From~\eqref{convdisj},~\cref{little}, and the standard disjunctive programming technique~\cite{Bal85,Roc70}, it follows that an extended formulation for $\PP(G)$ is given by:
\begin{align}
& \lambda_0 + \lambda_1 = 1, \; \lambda_0 \geq 0, \; \lambda_1 \geq 0 \label{e1}\\
& z_v = z^0_v + z^1_v, \; \forall v \in V \label{e2}\\
& z_e = z^0_e + z^1_e, \; \forall e \in E\label{e3}\\
& z^J_{ii} = z^{0,J}_{ii}+ z^{1,J}_{ii}, \; \forall J \subseteq V \setminus \{i\}\label{e6}\\
& z^J_{jj} = z^{0,J}_{jj}+ z^{1,J}_{jj}, \; \forall J \subseteq V \setminus \{j\}\label{e7}\\
& z^0_{k} = 0 \label{e8}\\
& z^0_{e} = 0, \; \forall e \in E: e\ni k \label{e9}\\
& z^{0,J}_{ii} = 0, \; \forall J \subseteq V \setminus \{i\}:  J \ni k \label{e10} \\
 &z^{0,J}_{jj} = 0, \; \forall J \subseteq V \setminus \{j\}:  J \ni k \label{e11}\\
&{\footnotesize
\begin{bmatrix}
\ell^0(J, V\setminus(J \cup \{i,j,k\})) &
\ell^0(J \cup\{i\}, V\setminus(J \cup \{i,j,k\})) &
\ell^0(J \cup \{j\}, V\setminus(J \cup \{i,j,k\}))\\[6pt]
\ell^0(J \cup\{i\}, V\setminus(J \cup \{i,j,k\})) &
\rho^0(i,J,V \setminus (J\cup\{i,j,k\})) &
\ell^0(J \cup\{i,j\}, V\setminus (J\cup\{i,j,k\})) \\[6pt]
\ell^0(J \cup\{j\}, V\setminus (J\cup\{i,j,k\})) &
\ell^0(J \cup\{i,j\}, V\setminus(J\cup\{i,j,k\})) &
\rho^0(j,J, V\setminus (J\cup\{i,j,k\}))
\end{bmatrix}
}\succeq 0, \nonumber\\
& \qquad \forall J \subseteq V \setminus \{i,j,k\}\label{e12}\\
&{\footnotesize
\begin{bmatrix}
\ell^0(J, V\setminus(J \cup \{i,k\})) &
\ell^0(J \cup\{i\}, V\setminus(J \cup \{i,k\})) \\[6pt]
\ell^0(J \cup\{i\}, V\setminus(J \cup \{i,k\})) &
\rho^0(i,J,V \setminus (J\cup\{i,k\}))
\end{bmatrix}
} \succeq 0, \; \forall J \subseteq V \setminus \{i,k\}\label{missing}\\
&{\footnotesize
\begin{bmatrix}
\ell^0(J, V\setminus(J \cup \{j,k\})) &
\ell^0(J \cup\{j,k\}, V\setminus(J \cup \{j,k\})) \\[6pt]
\ell^0(J \cup\{j,k\}, V\setminus(J \cup \{j,k\})) &
\rho^0(j,J,V \setminus (J\cup\{j,k\}))
\end{bmatrix}
}\succeq 0, \; \forall J \subseteq V \setminus \{j,k\}\label{e13}\\
& \ell^0(J, V \setminus (J \cup \{k\})) \geq 0, \quad \forall J \subseteq V \setminus \{k\}\label{e14} \\
& z^1_{k} = \lambda_1\label{e15}\\
& z^1_{e} = z^1_{e \setminus \{k\}}, \; \forall e \in E: e\ni k\label{e16}\\
&z^{1,J}_{ii}=z^{J\setminus \{k\}}_{ii},\; \; \forall J \subseteq V \setminus \{i\}: J \ni k \label{e17}\\
&z^{1,J}_{jj}=z^{J\setminus \{k\}}_{jj}, \; \forall J \subseteq V\setminus \{j\}:  J \ni k\label{e18}\\
&{\footnotesize
\begin{bmatrix}
\ell^1(J, V\setminus(J \cup \{i,j,k\})) &
\ell^1(J \cup\{i\}, V\setminus(J \cup \{i,j,k\})) &
\ell^1(J \cup \{j\}, V\setminus(J \cup \{i,j,k\}))\\[6pt]
\ell^1(J \cup\{i\}, V\setminus(J \cup \{i,j,k\})) &
\rho^1(i,J,V \setminus (J\cup\{i,j,k\})) &
\ell^1(J \cup\{i,j\}, V\setminus (J\cup\{i,j,k\})) \\[6pt]
\ell^1(J \cup\{j\}, V\setminus (J\cup\{i,j,k\})) &
\ell^1(J \cup\{i,j\}, V\setminus(J\cup\{i,j,k\})) &
\rho^1(j,J, V\setminus (J\cup\{i,j,k\}))
\end{bmatrix}
}\succeq 0, \nonumber\\
& \qquad \forall J \subseteq V \setminus \{i,j,k\}\label{e19}\\
&{\footnotesize
\begin{bmatrix}
\ell^1(J, V\setminus(J \cup \{i,k\})) &
\ell^1(J \cup\{i\}, V\setminus(J \cup \{i,k\})) \\[6pt]
\ell^1(J \cup\{i\}, V\setminus(J \cup \{i,k\})) &
\rho^1(i,J,V \setminus (J\cup\{i,k\}))
\end{bmatrix}
} \succeq 0, \; \forall J \subseteq V \setminus \{i,k\}\label{e20}\\
&{\footnotesize
\begin{bmatrix}
\ell^1(J, V\setminus(J \cup \{j,k\})) &
\ell^1(J \cup\{j,k\}, V\setminus(J \cup \{j,k\})) \\[6pt]
\ell^1(J \cup\{j,k\}, V\setminus(J \cup \{j,k\})) &
\rho^1(j,J,V \setminus (J\cup\{j,k\}))
\end{bmatrix}
}\succeq 0, \; \forall J \subseteq V \setminus \{j,k\}\label{e21}\\
& \ell^1(J, V \setminus (J \cup \{k\})) \geq 0, \quad \forall J \subseteq V \setminus \{k\}, \label{e22}
\end{align}
where we define
\begin{align*}
& z^0_{\emptyset} := \lambda_0\\
& \ell^0(J, V\setminus (J\cup\{k\})) := \sum_{t: t\subseteq V \setminus (J\cup\{k\})}{(-1)^{|t|} z^0_{J \cup t}}, \quad \forall J \subseteq V \setminus \{k\}\\
& \rho^0(i,J,V \setminus (J \cup\{i,k\}))= \sum_{t: t\subseteq V\setminus (J\cup\{i,k\})}{(-1)^{|t|} z_{ii}^{0,J \cup t}}, \quad \forall J \subseteq V \setminus \{i,k\}\\
&z^1_{\emptyset} := \lambda_1\\
&\ell^1(J, V\setminus (J\cup\{k\})) := \sum_{t: t\subseteq V \setminus (J\cup\{k\})}{(-1)^{|t|} z^1_{J \cup t}}, \quad \forall J \subseteq V \setminus \{k\}\\
& \rho^1(i,J,V \setminus (J \cup\{i,k\})):= \sum_{t: t\subseteq V\setminus (J \cup \{i,k\})}{(-1)^{|t|} z_{ii}^{1,J \cup t}}, \quad \forall J \subseteq V \setminus \{i,k\}.
\end{align*}
That is, $\ell^0(\cdot, \cdot)$ (resp. $\ell^1(\cdot, \cdot)$) is obtained from $\ell(\cdot, \cdot)$ by replacing $z_v$ with $z^0_v$ (resp. $z^1_v$) for all $v \in V$, $z_e$ with $z^0_e$ (resp. $z^1_e$) for all $e \in E$ and $z_{\emptyset}$,  with $z^0_{\emptyset}$ (resp. $z^1_{\emptyset}$). Similarly, $\rho^0(i,\cdot,\cdot)$ (resp. $\rho^1(i,\cdot,\cdot)$) is obtained from $\rho(i,\cdot,\cdot)$ by replacing $z^J_{ii}$ with $z^{0,J}_{ii}$ (resp. $z^{1,J}_{ii}$) for all $J \subseteq V \setminus \{i\}$.

In the remainder of the proof, we project out variables $\lambda_0, \lambda_1, z^0, z^1$ from the above system and show that the resulting system coincides with system~\eqref{convLMI1}-~\eqref{convLast}. First, from~\eqref{e1},~\eqref{e2}~\eqref{e8}, and~\eqref{e15} it follows that
\begin{equation}\label{first}
\lambda_0 = 1-z_{k}, \quad \lambda_1 = z_{k}.
\end{equation}
Next, from~\eqref{e3},~\eqref{e9}, and~\eqref{e16}, we get that:
\begin{align}
  &  z^1_e = z^1_{e \setminus \{k\}} = z_e \quad \forall e \in E: e \ni k\label{second}\\
  &  z^0_{e \setminus \{k\}} = z_{e \setminus \{k\}} -z_e \quad \forall e \in E: e \ni k\label{third}
\end{align}
Similarly, from~\eqref{e6},~\eqref{e7},~\eqref{e10},~\eqref{e11},~\eqref{e17}, and~\eqref{e18} it follows that
\begin{align}
  & z^{1,J}_{ii}=z^{1,J\setminus \{k\}}_{ii}=z^J_{ii}, \quad \forall J \subseteq V \setminus \{i\}: J \ni k\label{fourth}\\
    & z^{0,J\setminus \{k\}}_{ii}=z^{J\setminus \{k\}}_{ii}-z^{J}_{ii},\quad \forall J \subseteq V \setminus \{i\}: J \ni k
    \label{fifth}\\
  & z^{1,J}_{jj}=z^{1,J\setminus \{k\}}_{jj}=z^J_{ii}, \quad \forall J \subseteq V\setminus \{j\}: J \ni k\label{sixth}\\
    & z^{0,J\setminus \{k\}}_{jj}=z^{J\setminus \{k\}}_{jj}-z^{J}_{jj},\quad \forall J \subseteq V \setminus \{j\}: J \ni k. \label{seventh}
\end{align}
We use equation~\eqref{first} to project out $\lambda_0, \lambda_1$, equation~\eqref{second} to project out $z^1_v$ for all $v \in V \setminus \{k\}$ and for all $z^1_e$ for all $e \in \bar E$, equation~\eqref{third} to project out  $z^0_v$ for all $v \in V \setminus \{k\}$ and $z^0_e$ for all $e \in \bar E$, equation~\eqref{fourth} (resp. equation~\eqref{sixth}) to project out $z^{1,J}_{ii}$ (resp. $z^{1,J}_{jj}$) for all $J \subseteq V \setminus \{i,k\}$ (resp. $J \subseteq V \setminus \{j,k\}$), and we use equation~\eqref{fifth} (resp.~\eqref{seventh}) to project out $z^{0,J}_{ii}$ (resp. $z^{0,J}_{jj}$) for all $J \subseteq V \setminus \{i,k\}$ (resp. $J \subseteq V \setminus \{j,k\}$).
By making these substitutions, for any $ J \subseteq V \setminus \{k\}$, we get:
\begin{align}
    \ell^0(J, V \setminus (J \cup \{k\})) = &\sum_{t \subseteq V \setminus (J \cup \{k\})}{(-1)^t (z_{J \cup t}-z_{J \cup \{k\} \cup t})}\nonumber\\
    =&\ell(J, V \setminus (J \cup \{k\}))-\ell(J\cup\{k\}, V \setminus (J \cup \{k\}))\nonumber\\
    =&\ell(J, V \setminus J )\label{eight}\\
    \ell^1(J, V \setminus (J \cup \{k\})) = & \sum_{t \subseteq V \setminus (J \cup \{k\})}{(-1)^t z_{J \cup t}}\nonumber\\
    =&\ell(J \cup \{k\}, V \setminus (J \cup \{k\})).\label{nine}
\end{align}
Similarly, for each $J \subseteq V \setminus \{i,k\}$, we get:
\begin{align}
    \rho^0(i,J,V \setminus (J \cup\{i,k\}))= &\sum_{t \subseteq V \setminus (J \cup \{i,k\})}{(-1)^t (z^{J \cup t}_{ii}-z^{J \cup \{k\} \cup t}_{ii})}\nonumber\\
    =& \rho(i,J, V \setminus (J \cup \{i,k\}))-\rho(i,J \cup \{k\}, V \setminus (J \cup \{i,k\}))\nonumber\\
    =&\rho(i,J,V \setminus (J \cup\{i\}))\label{ten}\\
    \rho^1(i,J,V \setminus (J \cup\{i,k\}))= &\sum_{t \subseteq V \setminus (J \cup \{i,k\})}{(-1)^t} {z^{J \cup \{k\} \cup t}_{ii}}\nonumber\\
    =&\rho(i,J \cup \{k\},V \setminus (J \cup\{i,k\})).\label{eleven}
\end{align}
First, substituting~\eqref{eight} and~\eqref{nine} into~\eqref{e14} and~\eqref{e22} we get RLT inequalities~\eqref{convLast}. Next, substituting~\eqref{eight} and~\eqref{ten} into LMI~\eqref{e12}, we get
\begin{equation}\label{final2}
 {\footnotesize
\begin{bmatrix}
\ell(J, V\setminus(J \cup \{i,j\})) &
\ell(J \cup\{i\}, V\setminus(J \cup \{i,j\})) &
\ell(J \cup \{j\}, V\setminus(J \cup \{i,j\}))\\[6pt]
\ell(J \cup\{i\}, V\setminus(J \cup \{i,j\})) &
\rho(i,J,V \setminus (J\cup\{i,j\})) &
\ell(J \cup\{i,j\}, V\setminus (J\cup\{i,j\})) \\[6pt]
\ell(J \cup\{j\}, V\setminus (J\cup\{i,j\})) &
\ell(J \cup\{i,j\}, V\setminus(J\cup\{i,j\})) &
\rho(j,J, V\setminus (J\cup\{i,j\}))
\end{bmatrix}
}\succeq 0,  \forall J \subseteq V \setminus \{i,j,k\},
\end{equation}
Similarly, substituting~\eqref{nine} and~\eqref{eleven} into LMI~\eqref{e19},
we get
\begin{align}\label{final3}
    &{\footnotesize
\begin{bmatrix}
\ell(J \cup \{k\}, V\setminus(J \cup \{i,j,k\})) &
\ell(J \cup\{i,k\}, V\setminus(J \cup \{i,j,k\})) &
\ell(J \cup \{j,k\}, V\setminus(J \cup \{i,j,k\}))\\[6pt]
\ell(J \cup\{i,k\}, V\setminus(J \cup \{i,j,k\})) &
\rho(i,J \cup \{k\},V \setminus (J\cup\{i,j,k\})) &
\ell(J \cup\{i,j,k\}, V\setminus (J\cup\{i,j,k\})) \\[6pt]
\ell(J \cup\{j,k\}, V\setminus (J\cup\{i,j,k\})) &
\ell(J \cup\{i,j,k\}, V\setminus(J\cup\{i,j,k\})) &
\rho(j,J \cup\{k\}, V\setminus (J\cup\{i,j,k\}))
\end{bmatrix}
}\succeq 0, \nonumber\\
& \forall J \subseteq V \setminus \{i,j,k\}
\end{align}
The two LMI systems~\eqref{final2} and~\eqref{final3} can be equivalently written as one LMI system given by~\eqref{convLMI1}. Similarly, substituting~\eqref{eight} and~\eqref{ten} into LMI~\eqref{e12}, we get
\begin{equation}\label{final4}
    {\footnotesize
\begin{bmatrix}
\ell(J, V\setminus(J \cup \{i\})) &
\ell(J \cup\{i\}, V\setminus(J \cup \{i\})) \\[6pt]
\ell(J \cup\{i\}, V\setminus(J \cup \{i\})) &
\rho(i,J,V \setminus (J\cup\{i\}))
\end{bmatrix}
} \succeq 0, \; \forall J \subseteq V \setminus \{i,k\},
\end{equation}
while substituting~\eqref{nine} and~\eqref{eleven} into LMI~\eqref{e20}, we get
\begin{equation}\label{final5}
    {\footnotesize
\begin{bmatrix}
\ell(J \cup \{k\}, V\setminus(J \cup \{i,k\})) &
\ell(J \cup\{i,k\}, V\setminus(J \cup \{i,k\})) \\[6pt]
\ell(J \cup\{i,k\}, V\setminus(J \cup \{i,k\})) &
\rho(i,J \cup \{k\},V \setminus (J\cup\{i,k\}))
\end{bmatrix}
} \succeq 0, \; \forall J \subseteq V \setminus \{i,k\}
\end{equation}
The two LMI systems~\eqref{final4} and~\eqref{final5} can be equivalently written as one LMI system given by~\eqref{convLMI2}.
By symmetry, following an identical line of argument, we deduce that by projecting out the extra variables from LMIs~\eqref{e13} and~\eqref{e21}, we obtain LMIs~\eqref{convLMI3}. Therefore, system~\eqref{convLMI1}-\eqref{convLast} is an extended formulation for $\PP(G)$.
Finally, observe that the extra variables in this extended formulation are $z^J_{ii}$ for all nonempty $J \subseteq V \setminus \{i\}$ and $z^J_{jj}$ for all nonempty $J \subseteq V \setminus \{j\}$. Since $G$ is a complete hypergraph, $V \cup E$ consists of all nonempty subsets of $V$. Therefore, we have $|V|+|E|-1$ extra variables.
\end{proof}

By~\cref{th:2p} and~\cref{obsxxx}, the following result is immediate:

\begin{corollary}\label{compG}
    Let $G=(V,E,L)$ be a complete graph with $L=L^+=\{\{i,i\},\{j,j\}\}$. Then an extended formulation for $\QP(G)$ is given by inequalities~\eqref{convLMI1}-~\eqref{convLast}.
\end{corollary}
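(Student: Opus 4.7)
The plan is essentially a one-step reduction to Theorem~\ref{th:2p} using Lemma~\ref{obsxxx}. Given a complete graph $G=(V,E,L)$ with $L=L^+=\{\{i,i\},\{j,j\}\}$, I would introduce the complete hypergraph $G'=(V,E',L)$ on the same node set, where $E'$ contains every subset of $V$ of cardinality at least two. Since $E$ consists exactly of the 2-element subsets of $V$, we have $E \subseteq E'$, so Lemma~\ref{obsxxx} applies directly: any formulation for $\PP(G')$ serves as an extended formulation for $\PP(G) = \QP(G)$.

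Next, I would invoke Theorem~\ref{th:2p}, which establishes that system~\eqref{convLMI1}--\eqref{convLast} is an extended formulation for $\PP(G')$ with $|V|+|E'|-1$ additional variables. Composing the two facts, system~\eqref{convLMI1}--\eqref{convLast} is also an extended formulation for $\QP(G)$; the auxiliary variables $z_e$ for $e \in E' \setminus E$ (along with $z^J_{ii}$ and $z^J_{jj}$ for the relevant $J$) simply play the role of additional projected-out coordinates.

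Since every ingredient is already proved earlier in the paper, there is no real obstacle here; the only minor bookkeeping is to note that the set of extra variables is interpreted correctly under the graph-to-hypergraph lifting, i.e., the monomial variables indexed by larger subsets of $V$ are treated as auxiliary variables in the extended space for $\QP(G)$ rather than as original coordinates. This observation makes the corollary an immediate consequence of Theorem~\ref{th:2p} and Lemma~\ref{obsxxx}.
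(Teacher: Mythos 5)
Your proposal is correct and follows exactly the route the paper takes: the paper's proof of this corollary consists precisely of citing Theorem~\ref{th:2p} together with Lemma~\ref{obsxxx}, which is the composition you spell out. The extra bookkeeping you mention about treating the monomial variables $z_e$, $e \in E' \setminus E$, as auxiliary coordinates is a fair (and correct) elaboration of what the paper leaves implicit.
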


We next show that if the graph $G$ is sparse, then one can obtain a significant generalization of~\cref{compG}.
Consider a graph $G=(V,E,L)$ and as before denote by $V_c$, $c \in \C$ the connected components of $G_{V^+}$, where
$G_{V^+}$ is the subgraph of $G$ induced by $V^+$ defined by~\eqref{vplus}. Henceforth, if $|V_c| \leq 2$ for all $c \in \C$, we say that $G$ does not have any \emph{connected-plus-triplet}.
Thanks to~\cref{naiveDecomp} and~\cref{th:2p}, we next present a  sufficient condition under which $\QP(G)$ is SDP-representable.

\begin{theorem}\label{th:SDPrep}
    Consider a graph $G=(V,E,L)$. If $G$ does not have any connected-plus-triplets, then $\QP(G)$ admits a SDP-representable formulation with $O(2^{|V|})$ variables and inequalities.
\end{theorem}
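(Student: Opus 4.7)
The plan is to assemble the sufficient condition from the structural decomposition already established in \cref{naiveDecomp} together with the SDP-representability results for the ``atoms'' of the decomposition. The hypothesis that $G$ has no connected-plus-triplet means that every connected component $V_c$ of $G_{V^+}$ satisfies $|V_c|\in\{1,2\}$. This is exactly the setting in which each atomic factor $\PP(G'_c)$ that appears in \cref{naiveDecomp} contains at most two plus loops, so the previously established convex-hull results cover every component.

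First, I would invoke \cref{naiveDecomp} to write $\QP(G)$ as the juxtaposition of formulations for $\PP(G'_c)$ over $c\in\C\cup\{0\}$, where $G'_c$ has $|V_c|+|N(V_c)|$ nodes and $|V_c|\le 2$ plus loops, and $G'_0$ has no plus loops. To put each $G'_c$ into a form covered by the convex-hull theorems I would complete it: for every $c$, let $G''_c$ be the complete hypergraph on the same node set, with exactly the same plus loops as $G'_c$. By \cref{obsxxx}, a formulation for $\PP(G''_c)$ serves as an extended formulation for $\PP(G'_c)$, so it suffices to exhibit an SDP extended formulation for each $\PP(G''_c)$.

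Next I would dispatch the three cases. If $|V_c|=0$, then $G''_0$ has no loops, so $\PP(G''_0)=\MP(G''_0)$ is the multilinear polytope of a complete hypergraph and admits the explicit linear (hence SDP) description given by \cref{prop:RLT}. If $|V_c|=1$, then $G''_c$ is a complete hypergraph with a single plus loop, and \cref{convres} provides an explicit SOC-representable, and therefore SDP-representable, description of $\PP(G''_c)$. If $|V_c|=2$, then $G''_c$ is a complete hypergraph with exactly two plus loops, and \cref{th:2p} (equivalently, \cref{compG}) furnishes an explicit SDP extended formulation via the LMI system \eqref{convLMI1}--\eqref{convLast}. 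In all three cases the formulation is expressible as the projection of a spectrahedron.

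Finally, I would concatenate these three extended formulations following the decomposition prescribed by \cref{naiveDecomp}, together with the inequalities \eqref{harmless} from \cref{minusloops} to handle the minus loops that were dropped at the outset. Since the juxtaposition of finitely many SDP-representable extended formulations is itself SDP-representable (projection and intersection preserve SDP-representability), this yields an SDP extended formulation for $\QP(G)$, proving the theorem. I do not expect a genuine technical obstacle here: all the heavy lifting is done in the preceding results, and the only real point to verify is that completing $G'_c$ to the complete hypergraph $G''_c$ on its node set does not introduce additional plus loops, which is immediate from the definition of section hypergraph and the fact that ``completion'' in \cref{naiveDecomp} only added ordinary hyperedges inside $N(V_c)$. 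The mild bookkeeping is to argue that the degrees of freedom on the auxiliary variables introduced by each component do not collide across components, which follows from the fact that the component-wise extended variables are indexed by subsets of disjoint node sets $V_c\cup N(V_c)$.
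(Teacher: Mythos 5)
Your proposal is correct and follows essentially the same route as the paper: decompose via \cref{naiveDecomp} into components with at most two plus loops (which is exactly what the no-connected-plus-triplet hypothesis guarantees), complete each piece to a complete hypergraph via \cref{obsxxx}, and then invoke \cref{prop:RLT}, \cref{convres}, or \cref{th:2p} according to whether the piece has zero, one, or two plus loops. The only differences are cosmetic — you make the completion step and the minus-loop bookkeeping explicit, whereas the paper dispatches the latter once and for all at the start of Section 3.
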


\begin{proof}
Since $G$ does not have any connected-plus-triplets, we have $|V_c| \leq 2$ for all $c \in \C$. Therefore, by~\cref{naiveDecomp}, an extended formulation for $\QP(G)$ is given by putting together formulations for $\PP(G'_c)$, $c \in \C \cup \{0\}$, where for each $c \in \C$, the hypergraph $G'_c$ has either one or two plus loops and the hypergraph $G'_0$ has no plus loops. Consider some $c \in \C$; if $G'_c$ has one plus loop, then from~\cref{obsxxx},~\cref{convres} and~\cref{pastRel} it follows that $\PP(G'_c)$ is SDP-representable. If $G'_c$ has two plus loops, then from~\cref{obsxxx} and~\cref{th:2p} it follows that $\PP(G'_c)$ is SDP-representable. In either case, the extended formulation contains $O(2^{|V \setminus V^+|})$ variables and inequalities. Finally, consider $G'_0$; since $G'_0$ has no plus loops, $\PP(G'_0)$ coincides with the multilinear polytope $\MP(G'_0)$ and therefore an extended formulation for it is given by~\cref{prop:RLT} which as $O(2^{|V\setminus V^+|})$ variables and inequalities.  We then conclude that $\QP(G)$ is SDP-representable. Moreover, using the fact that $(|V^+|+1 )2^{|V \setminus V^+|} \leq 2^{|V|}$, we infer that this extended formulation has $O(2^{|V|})$ variables and inequalities.
\end{proof}

It is important to note that the proof of~\cref{th:SDPrep} is constructive.  However, the resulting extended formulation may not be of polynomial size.  In the next section, we obtain a sufficient condition under which $\QP(G)$ admits a polynomial-size SDP-representable formulation that can be constructed in polynomial time.

\section{Polynomial-size SDP-representable formulations}
\label{sec: polysize}

In this section, we obtain a sufficient condition under which the extended formulation of~\cref{th:SDPrep} is of polynomial size.
Consider a graph $G=(V,E,L)$.
Recall that if $L= \emptyset$, then $\QP(G)$ coincides with the Boolean quadric polytope $\BQP(G)$, a well-known polytope that has been thoroughly studied by the integer programming community~\cite{Pad89,DezLau97}.
As we discussed in~\cref{sec: intro}, a bounded treewidth for $G$ is a necessary and sufficient condition for the existence of polynomial-size linear extended formulations for $\BQP(G)$. This suggests that assuming
\begin{equation}\label{necCond1}
\tw(G) \in O(\log |V|),
\end{equation}
is reasonable for obtaining a polynomial-size SDP-representable formulation for $\QP(G)$ as well.
Now consider the hypergraphs $G'_c$ for $c \in \C$, as defined in the proof of~\cref{th:SDPrep}. Clearly,~\cref{convres} and~\cref{th:2p} provide polynomial-size extended formulations for $\PP(G'_c)$, $c \in \C$, only if
\begin{equation}\label{necCond2}
|V_c|+|N(V_c)| \in O(\log |V|), \quad \forall c \in \C,
\end{equation}
where $N(V_c)$ is defined by~\eqref{nghood}. We next prove that conditions~\eqref{necCond1} and~\eqref{necCond2} are, in fact, sufficient to guarantee that the extended formulation of~\cref{th:SDPrep} is of polynomial size.

\begin{theorem}\label{th: poly}
    Consider a graph $G=(V,E,L)$. Suppose that the following conditions are satisfied:
    \medskip
    \begin{itemize}[leftmargin=1.0cm]
        \item [(i)] $G$ does not have any connected-plus-triplets.

        \item [(ii)]  The treewidth of $G$ is bounded; \ie $\tw(G) \in O(\log |V|)$.
        \item [(iii)] The degree of each node with a plus loop is bounded; \ie $\deg(v) \in O(\log |V|)$ for all $v \in V^+$, where $V^+$ is defined by~\eqref{vplus}.
    \end{itemize}
    \medskip
    Then $\QP(G)$ admits a polynomial-size SDP-representable formulation that can be constructed in polynomial time.
\end{theorem}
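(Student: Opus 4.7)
The plan is to combine the constructive proof of \cref{th:SDPrep} with polynomial-size extended formulation results for the constituent pieces. By \cref{naiveDecomp}, an extended formulation for $\QP(G)$ is obtained by stitching together formulations for $\PP(G'_c)$, $c \in \C$, with $\PP(G'_0) = \MP(G'_0)$ (the equality holds because $G'_0$ has no plus loops). It suffices to verify that each piece admits a polynomial-size formulation that is computable in polynomial time.

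For the component pieces, condition (i) gives $|V_c| \leq 2$, and combining with condition (iii) we obtain $|V_c| + |N(V_c)| \leq 2 + 2 \cdot O(\log|V|) = O(\log|V|)$. Hence each hypergraph $G'_c$ has only logarithmically many vertices, and by \cref{convres} (when $|V_c|=1$) or \cref{th:2p} (when $|V_c|=2$), the SDP-representable formulation of $\PP(G'_c)$ has size $2^{O(\log|V|)} = \poly(|V|)$. Summed over the at most $|V|$ components, the total contribution remains polynomial, and each individual formulation is explicit and constructible in polynomial time.

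For the remaining piece $\MP(G'_0)$, I would appeal to the standard fact, generalizing the bound $O(2^\tau |V|)$ for $\BQP(G)$ from~\cite{Laurent09,KolKou15,BieMun18} to the hypergraph setting, that the multilinear polytope of a hypergraph whose primal graph has treewidth $\tau$ and whose hyperedges have size at most $r$ admits a linear extended formulation of size $O(2^{\tau+r}|V|)$. The hypergraph $G'_0$ has maximum hyperedge size $r \leq \max_c |N(V_c)| = O(\log|V|)$ by condition (iii), so it remains to bound the treewidth of the primal graph of $G'_0$ by $O(\log|V|)$.

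This last step is the main technical obstacle. Starting from a tree decomposition of $G$ of width $O(\log|V|)$ (computable in polynomial time by Bodlaender-style algorithms for logarithmic widths under condition (ii)), one must accommodate the added cliques on each $N(V_c)$ without blowing up the bag sizes super-logarithmically. My approach would be to route the vertices of $N(V_c)$ through a single bag, exploiting the connectedness of $V_c$ and the bounded size $|V_c \cup N(V_c)| = O(\log|V|)$, then propagate along the tree to preserve the subtree property. A naive analysis yields $O(\log^2|V|)$, so the argument must carefully exploit that the $V_c$'s are pairwise vertex-disjoint and that each bag meets only $O(\log|V|)$ of them, leading, via an amortized insertion schedule, to a final width of $O(\log|V|)$. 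Once this treewidth bound is secured, combining the SDP pieces for the components with the linear extended formulation for $\MP(G'_0)$ yields the desired polynomial-size SDP-representable formulation for $\QP(G)$, constructible in polynomial time.
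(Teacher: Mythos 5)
Your overall architecture matches the paper's: decompose via \cref{naiveDecomp} into $\PP(G'_c)$, $c\in\C\cup\{0\}$, bound each $\PP(G'_c)$ with $c \in \C$ by $2^{O(\deg(v_1)+\deg(v_2))}=\poly(|V|)$ using \cref{convres} and \cref{th:2p}, sum over at most $|V|$ components, and reduce $\PP(G'_0)=\MP(G'_0)$ to a treewidth bound on the intersection graph of $G'_0$. Those parts are fine. The gap is exactly where you flag "the main technical obstacle": you never actually prove that the intersection graph of $G'_0$ has treewidth $O(\log|V|)$. Your sketch --- keep the tree decomposition of $G$, insert each clique $N(V_c)$ into existing bags, "propagate along the tree," and recover $O(\log|V|)$ from an "amortized insertion schedule" --- is not a proof, and its premises are shaky: a bag of size $O(\log|V|)$ can meet arbitrarily many of the sets $N(V_c)$ (a single vertex $u$ may be adjacent to many distinct components $V_c$), so the claim that each bag meets only $O(\log|V|)$ of them is unjustified, and inserting whole sets $N(V_c)$ of size $\Theta(\log|V|)$ into existing bags is precisely the operation that produces the $\log^2$ blow-up you are trying to avoid.

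The idea you are missing is that no insertion into existing bags is needed at all. The paper's \cref{lem:single-step} shows that deleting a connected set $C$ and adding a clique on $N(C)$ yields treewidth at most $\max\bigl(\tw(G),\,|N(C)|-1\bigr)$ --- a maximum, not a sum: one removes the vertices of $C$ from every bag (bags only shrink) and attaches a \emph{single new leaf bag} equal to $N(C)$ to a node of the connected subtree of bags that met $C$; connectedness of $C$ guarantees that subtree is connected, and each $v\in N(C)$ already has a bag in it, which is what rescues the running-intersection property. Iterating over the pairwise-disjoint, mutually non-adjacent components (\cref{lemm:component-elimination}) gives $\tw(G'_0)\le\max(\kappa,\,d_{\max}-1)$ with $d_{\max}\le\max_c\sum_{v\in V_c}\deg(v)=O(\log|V|)$, with no accumulation across components, after which \cref{th: alphaPoly} finishes the argument. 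Without this lemma (or an equivalent), your proof does not close.
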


Notice that, given a graph $G=(V,E,L)$, one can check in polynomial time whether it satisfies conditions~(i)-(iii) of~\cref{th: poly}.

To prove~\cref{th: poly}, we need to introduce some tools and terminology. Given a graph $G = (V, E)$, a \emph{tree decomposition} of $G$ is a pair $(\X, T)$, where $\X = \{X_1, \cdots, X_m\}$ is a family of subsets of $V$, called \emph{bags}, and $T$ is a tree with $m$ nodes, where each node of $T$ corresponds to a bag such that:

\medskip

\begin{enumerate}
    \item $V = \bigcup_{i \in [m]}{X_i}$.
    \item For every edge $\{v_j, v_k\} \in E$, there is a bag $X_i$ for some $i \in [m]$ such that $X_i \ni v_j, v_k$.
    \item For each node $v \in V$, the set of all bags containing $v$ induces a connected subtree of $T$.
\end{enumerate}
\medskip
The \emph{width} $\omega(\X)$ of a tree decomposition $(\X,T)$ is the size of its largest bag $X_i$ minus one. The \emph{treewidth} $\tw(G)$ of a graph $G$ is the minimum width among all possible tree decompositions of $G$. Given a hypergraph $G=(V,E)$, the \emph{intersection graph} of $G$ is the graph with node set $V$, where two nodes
$v,v' \in V$ are adjacent if $v,v' \in e$ for some $e \in E$.
We define the tree decomposition and treewidth of a  hypergraph $G=(V,E)$, as the tree decomposition and treewidth of its intersection graph. For a graph (resp. hypergraph) with loops, \ie $G=(V,E, L)$, we define its tree decomposition and treewidth as the tree decomposition and treewidth of the corresponding loopless graph (resp. hypergraph); \ie $(V,E)$.

In~\cite{WaiJor04,Laurent09,BieMun18}, the authors proved that if a hypergraph $G$ has a bounded treewidth, then the multilinear polytope $\MP(G)$ admits a polynomial-size extended formulation.

\begin{proposition}\label{th: alphaPoly}
Let $G=(V,E)$ be a hypergraph with $\tw(G) = \kappa$. Then $\MP(G)$ has a linear extended formulation with $O(2^{\kappa} |V|)$ variables and inequalities.
Moreover, if $\kappa \in O(\log \poly(|V|, |E|))$, then $\MP(G)$ has a polynomial-size extended formulation that can be constructed in polynomial time.
\end{proposition}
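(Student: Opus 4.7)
The plan is to perform dynamic programming on a tree decomposition of $G$, attaching to each bag a local marginal polytope of dimension $2^{O(\kappa)}$ and forcing adjacent bags to agree on their shared nodes. Let $(\X, T)$ with $\X = \{X_1, \ldots, X_m\}$ be a tree decomposition of $G$ of width $\kappa$; after discarding any bag contained in a neighboring bag one may assume $m \leq |V|$. By enlarging the edge set so that every subset of cardinality at least two of every bag is an edge (adding at most $O(2^\kappa |V|)$ edges) and invoking \cref{obsxxx}---which states that a formulation for $\MP(G')$ with $E \subseteq E'$ serves as an extended formulation for $\MP(G)$---I may assume without loss of generality that every edge of $G$ is contained in some bag and that every such subset of a bag is in fact an edge of $G$.

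For each bag $X_i$, introduce nonnegative variables $\lambda_i^S$ indexed by subsets $S \subseteq X_i$, with the simplex equation $\sum_{S \subseteq X_i} \lambda_i^S = 1$. The intended interpretation is that $\lambda_i^S$ is the probability, under some distribution on $\{0,1\}^V$, that the restriction to $X_i$ has support exactly $S$. The original variables are recovered via the linking equations
\begin{align*}
z_v &= \sum_{S \subseteq X_i:\, v \in S} \lambda_i^S \quad \text{for any bag } X_i \ni v, \\
z_e &= \sum_{S \subseteq X_i:\, e \subseteq S} \lambda_i^S \quad \text{for any bag } X_i \supseteq e,
\end{align*}
imposed for every $v \in V$ and every $e \in E$. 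To ensure these definitions are independent of the choice of bag, for each tree edge $\{X_i, X_j\}$ of $T$ and each $U \subseteq X_i \cap X_j$ I impose the local marginal consistency equation
\begin{align*}
\sum_{S \subseteq X_i:\, S \cap X_j = U} \lambda_i^S \;=\; \sum_{S \subseteq X_j:\, S \cap X_i = U} \lambda_j^S .
\end{align*}
Denote the resulting polyhedron in the $(z, \lambda)$-space by $\Q$.

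Correctness requires $\proj_z \Q = \MP(G)$. The inclusion $\MP(G) \subseteq \proj_z \Q$ is immediate: any extreme point of $\MP(G)$ is a $\{0,1\}$-assignment $\hat z$, and setting $\lambda_i^{\hat S_i} = 1$ for $\hat S_i := \{v \in X_i : \hat z_v = 1\}$ and $\lambda_i^S = 0$ otherwise produces a feasible lift. The reverse inclusion is the main technical step: given feasible $(z, \lambda)$, I root $T$ arbitrarily and use the \emph{running intersection property} of tree decompositions, together with the consistency equations, to glue the local distributions $\lambda_i$ into a single probability measure $\mu$ on $\{0,1\}^V$ by bag-by-bag conditional extension. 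Each new bag intersects the current subtree in a separator on which the marginals already agree by consistency, so the joint distribution extends consistently. The bag-marginals of $\mu$ then coincide with $\lambda_i$, and $z$ equals the corresponding moments of $\mu$ along the edges of $G$, placing $z$ in $\MP(G)$. This junction-tree gluing is classical but is the principal obstacle, requiring a careful induction on the tree using the consistency equations to ensure the constructed distribution is globally well defined.

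Finally, the size accounting gives $m \cdot 2^{\kappa+1} = O(2^\kappa |V|)$ variables $\lambda_i^S$, at most $O(2^\kappa |V|)$ linking equations (since $|V| + |E| \leq m \cdot 2^{\kappa+1}$ after the enlargement), and $(m-1) \cdot 2^{\kappa+1} = O(2^\kappa |V|)$ consistency equations, for the claimed $O(2^\kappa |V|)$ total. When $\kappa \in O(\log \poly(|V|, |E|))$, we have $2^\kappa \in \poly(|V|, |E|)$, and a tree decomposition of width $O(\kappa)$ can be computed in polynomial time in this regime via standard treewidth-approximation algorithms, yielding a polynomial-size linear extended formulation for $\MP(G)$ that is polynomial-time constructible.
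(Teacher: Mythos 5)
Your construction is correct, but note that the paper does not prove this proposition at all: it is imported from the literature (\cite{WaiJor04,Laurent09,BieMun18}), and your local-marginal-polytope formulation on a junction tree, with marginal-consistency equations glued via the running intersection property, is exactly the argument of those references, with the right $O(2^{\kappa}|V|)$ size accounting. One small point worth tightening: the claim that every original hyperedge lies in some bag does not come from your edge enlargement or from \cref{obsxxx}; it follows because each hyperedge induces a clique in the intersection graph (through which the paper defines hypergraph treewidth), and every clique of a graph is contained in some bag of any tree decomposition.
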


To prove~\cref{th: poly}, we need to show that the treewidth of the intersection graph of the hypergraph $G'_0$ defined in the proof of~\cref{th:SDPrep} is not ``too large''. The next two lemmata establish this fact.
In the following, given a graph $G = (V,E)$ and a node $v \in V$, we say that the graph $G-v:=(V',E')$ is obtained from $G$ by \emph{removing} $v$ if $V'= V \setminus \{v\}$ and $E'=\{\{i,j\} \in E: i \neq v, j \neq v \}$.

\begin{lemma}\label{lem:single-step}
Let $G=(V,E)$ be a graph with $\tw(G) = \kappa$.
Let $C\subseteq V$ be such that the induced subgraph $G_{C}$ of $G$ is a connected graph.
Denote by $H$ a graph obtained from $G$ by (i) adding all edges between nodes of $N(C)$, where $N(C)$ is defined by~\eqref{nghood}, and (ii) removing all nodes in $C$.  Then the treewidth of $H$ is upper bounded by
\begin{equation}\label{newwidth}
\tw(H) \leq \max(\kappa, \; |N(C)|-1).
\end{equation}
\end{lemma}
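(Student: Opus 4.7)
The plan is to reduce the lemma to a single-vertex elimination statement by exploiting that $G_C$ is connected. First I would contract the edges of a spanning tree of $G_C$, collapsing $C$ into a single vertex $c^\star$ of the resulting graph $G^c$. Since edge contraction is a minor operation and treewidth is minor-monotone, $\tw(G^c)\le\tw(G)=\kappa$. The neighborhood of $c^\star$ in $G^c$ is exactly $N(C)$, and a direct check shows that $H$ is obtained from $G^c$ by the vertex elimination of $c^\star$: $H=G^c-c^\star+K_{N_{G^c}(c^\star)}$. Hence it suffices to prove the single-vertex statement
$$
\tw\bigl(G^c-c^\star+K_{N_{G^c}(c^\star)}\bigr)\le\max\bigl(\tw(G^c),|N(C)|-1\bigr).
$$

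To prove this, I would take a width-$\kappa$ tree decomposition $(\mathcal X',T')$ of $G^c$ and reshape it into a tree decomposition of $H$ in two steps. First, the set $S=\{c^\star\}\cup N(C)$ induces a connected star in $G^c$, so I would rearrange $(\mathcal X',T')$ so that some bag $X_\star$ contains $S$; concretely, attach a new bag $X_\star=S$ to the (connected) subtree $T'_{c^\star}$ of bags containing $c^\star$ at some node $t_0$, and for each $u\in N(C)$ extend its subtree along the unique path in $T'_{c^\star}$ from $t_0$ to a bag that already contains $u$ (such a bag exists because the edge $\{u,c^\star\}$ must be covered by a bag in $T'_{c^\star}$). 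Second, delete $c^\star$ from every bag; the new clique edges on $N(C)$ are covered by $X_\star\setminus\{c^\star\}=N(C)$, while the remaining edges of $H$ were already covered because they are inherited from $G^c$.

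The hard part will be bounding the bag sizes produced by the reshaping. A naive extension could route all of $N(C)$ through one intermediate bag in $T'_{c^\star}$, swelling it to size $\kappa+|N(C)|$, far above the target $\max(\kappa+1,|N(C)|)$. The key observation I would exploit is that every bag being modified already contains $c^\star$, which is deleted at the end, so each inserted $u\in N(C)$ can be accounted against the removed copy of $c^\star$ in that bag. Making this cancellation precise is the main technical step, and I would carry it out either by choosing $t_0$ as a centroid of $T'_{c^\star}$ relative to the subtrees $\{T'_u\cap T'_{c^\star}:u\in N(C)\}$ so that extensions are balanced across the subtree, or by invoking the classical fact that for a connected subset $S$ in a graph of treewidth $\omega$ one can reshape any tree decomposition so that $S$ lies in a single bag of width $\max(\omega,|S|-1)$. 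Either route yields bag sizes bounded by $\max(\kappa+1,|N(C)|)$ before the final deletion of $c^\star$, which translates into the required width bound $\max(\kappa,|N(C)|-1)$ for $H$.
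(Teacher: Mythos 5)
Your reduction to single-vertex elimination by contracting $C$ to $c^\star$ is correct and clean ($\tw(G^c)\le\kappa$ by minor-monotonicity, and $H=G^c-c^\star+K_{N(C)}$), and in spirit your construction is the same as the paper's: append one new bag $N(C)$ to the connected subtree of bags meeting $C$, then delete $C$ from every bag. You are also right that after attaching the new bag one must extend the subtree of bags containing each $u\in N(C)$ along $T'_{c^\star}$ so that property~3 of a tree decomposition is restored. The problem is that the step you yourself flag as ``the main technical step''---bounding the sizes of the bags that receive routed vertices---is exactly where the whole difficulty of the lemma sits, and neither of your two proposed resolutions closes it.

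First, cancelling each inserted $u$ against the deleted copy of $c^\star$ saves only one unit per bag, while a single bag of $T'_{c^\star}$ can lie on the routing path of \emph{every} $u\in N(C)$: if $T'_{c^\star}$ is a path of bags and the subtrees $T'_u\cap T'_{c^\star}$ are singletons spread along it, then the bag adjacent to the attachment point is crossed by all routes, and no choice of centroid avoids this. As described, the construction therefore only yields width roughly $\kappa+|N(C)|$, not $\max(\kappa,|N(C)|-1)$. Second, the ``classical fact'' you invoke is false as stated: let $G$ be $K_{2,3}$ with parts $\{a,b\}$ and $\{u_1,u_2,u_3\}$ plus the edge $\{a,b\}$, so $\tw(G)=2$, and let $S=\{u_1,a,u_2\}$, which induces a connected subgraph; then $G+K_S$ contains $K_4$ on $\{a,b,u_1,u_2\}$, so $\tw(G+K_S)\ge 3>\max(\tw(G),|S|-1)=2$. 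Even a correct variant restricted to $S=N[c^\star]$ would only give $\max(\kappa,|N(C)|)$ unless one additionally argues that every largest bag of the reshaped decomposition contains $c^\star$; and note that your claimed intermediate bound of $\max(\kappa+1,|N(C)|)$ on bag sizes is already violated by the new bag $X_\star=S$ itself, which has $|N(C)|+1$ elements. So the core assertion---that the routing can be organized so that the width is $\max(\kappa,|N(C)|-1)$ rather than $\kappa+|N(C)|-1$---remains unproved in your write-up.
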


\begin{proof}
If $|N(C)| \leq 1$, then the statement holds trivially, because $H$ is a subgraph of $G$. Henceforth, let $|N(C)| \geq 2$.
Consider a tree-decomposition $(\X, T)$  of $G$ with width $\omega(\X) = k$. For each $\bar v \in V$, denote by $W_{\bar v}$ the set of nodes in $T$ whose corresponding bags contain $\bar v$. Note that by Property~3 of a tree decomposition, $W_{\bar v}$ induces a connected subtree of $T$, which we denote by $T_{\bar v}$.
Define $W_{C} := \cup_{v \in C}{W_v}$. Since for each $v \in C$, $T_v$ is a connected subtree and $G_C$ is a connected graph, from property~2 of a tree decomposition, we deduce that $W_C$ induces a connected subtree of $T$, which we denote by $T_C$. Let $t_0 \in W_C$. Denote by $U$ the node set of the tree $T$. Define a new tree $\bar T$ obtained from $T$ by adding a new leaf $\bar t$ to the parent node $t_0$. Define $\bar \X := \{\bar X_t, t \in U\} \cup \{\bar X_{\bar t}\}$, where:
\begin{equation}\label{newbags}
\bar X_t := \begin{cases}
X_t\setminus C, & t\in U,\\[4pt]
N(C), & t=\bar t.
\end{cases}
\end{equation}
We claim that $(\bar T, \bar \X)$ is a valid tree decomposition for $H$. To see this, we show that $(\bar T, \bar \X)$ satisfies the three properties of a tree decomposition:
\medskip
\begin{enumerate}
    \item For any $v \in V \setminus C$, we have $v \in X_t$ for some $t \in U$, and by~\eqref{newbags}, we have $v \in \bar X_t$.
    Hence, all nodes of $H$ are present in some bag of $\bar \X$.

    \item Consider an edge $\{u,v\}$ of $H$. There are two cases. In the first case, we have $\{u,v\} \in E$ implying that $u, v \notin C$. In this case, we have $\{u,v\} \in X_t$ for some $t \in U$. From~\eqref{newbags}, it follows that $\{u,v\} \in \bar X_t$. In the second case, we have $\{u,v\} \notin E$ implying that $u, v \in N(C)$, in which case by~\eqref{newbags} we have $\{u,v\} \in \bar X_{\bar t}$. Therefore, all edges of $H$ are in some bag in $\bar \X$.

    \item Consider some $v \in V \setminus C$. Recall that $W_v$ denotes the set of nodes of $T$ whose bags contain $v$. By property~3 of a tree decomposition, the subtree of $T$ induced by $W_v$ is a connected subtree. Two cases arise:
    \begin{itemize}
        \item if $v \notin N(C)$, then the set of nodes of $\bar T$ whose bags contain $v$ coincides with $W_v$. Therefore, property~3 holds.
        \item if $v \in N(C)$, then the set of nodes of $\bar T$ whose bags contain $v$ is $W_v \cup \{\bar t\}$. Since $\{v,v'\} \in E$ for some $v' \in C$, there exists $t' \in U$ such that $v,v' \in X_{t'}$. Therefore $t' \in W_v \cap W_{v'} \subseteq W_v \cap W_C$. Because $W_C$ induces a connected subtree containing $t_0$, the unique path in $T$ from $t'$ to $t_0$ lies in the subtree induced by $W_C$. Attaching $\bar t$ to $t_0$, yields a path from $t'$ to $\bar t$ in $\bar T$, implying that $W_v \cup \{\bar t\}$ induces a connected subtree of $\bar T$. Therefore, property~3 holds.
    \end{itemize}
\end{enumerate}
\medskip
Therefore, $(\bar T, \bar \X)$ is a valid tree decomposition for $H$.
By~\eqref{newbags}, we have $|\bar X_t| \leq |X_t|$ for all $t \in U$ and $|\bar X_{\bar t}| = |N(C)|$, implying the validity of~\eqref{newwidth}.
\end{proof}

\begin{lemma}
\label{lemm:component-elimination}
Consider a graph $G=(V,E)$ and let $S\subseteq V$ be nonempty.  Denote by $G_S$ the subgraph of $G$ induced by $S$. Denote by $V_1, \cdots, V_p$ the connected components of $G_S$, for some $p \geq 1$. Define $G_0 := G$.
For each $i \in [p]$, let $G_i$ be the graph obtained from $G_{i-1}$ by removing the nodes in $V_i$ and adding edges between the pairs of nodes in $G_{i-1}-V_i$ that are both adjacent to some node in $V_i$ in $G_{i-1}$. Define
\[
N(V_i):=\{\,u\in V\setminus V_i : \exists v\in V_i\text{ with } \{u, v\}\in E\}, \quad \forall i \in [p],
\]
and
$$d_{\max}:=\max_{i\in [p]}|N(V_i)|.$$
If \(\tw(G)=\kappa\), then
\[
\tw(G_p)\le \max\bigl(\kappa,\ d_{\max}-1\bigr),
\]
where $G_p$ is the graph obtained from $G$ after $p$ component removal and clique addition operations.
\end{lemma}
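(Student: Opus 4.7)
The plan is to prove the statement by induction on $p$, using \cref{lem:single-step} as the single-step engine. The base case $p=0$ is trivial since $G_0=G$ and $\tw(G)=\kappa\le\max(\kappa,d_{\max}-1)$. For the inductive step, suppose $\tw(G_{i-1})\le\max(\kappa,d_{\max}-1)$ for some $i\in[p]$, and apply \cref{lem:single-step} to the graph $G_{i-1}$ with $C:=V_i$. Two conditions must be verified in order to invoke the single-step lemma: that $V_i$ induces a connected subgraph of $G_{i-1}$, and that the neighborhood of $V_i$ in $G_{i-1}$ has cardinality at most $d_{\max}$.

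For the first condition, observe that the operation that produces $G_j$ from $G_{j-1}$ (for $j<i$) only removes nodes in $V_j$ and adds edges between pairs of nodes in $V(G_{j-1})\setminus V_j$. Since $V_1,\dots,V_{i-1}$ are pairwise disjoint from $V_i$ (they are distinct connected components of $G_S$), none of the edges of $G$ incident to $V_i$ are removed while passing from $G=G_0$ to $G_{i-1}$, and no edge modification internal to $V_i$ takes place either. Consequently the induced subgraph of $G_{i-1}$ on $V_i$ coincides with the induced subgraph $G_{V_i}$ of $G$, which is connected by definition of $V_i$ as a connected component of $G_S$.

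For the second condition, note that new edges added during the construction of $G_{i-1}$ from $G$ are placed among nodes both adjacent (in some intermediate graph) to some removed $V_j$, and in particular none of these new edges has an endpoint in $V_i$. Therefore the edges of $G_{i-1}$ incident to $V_i$ are exactly the edges of $G$ incident to $V_i$, restricted to endpoints that have not yet been removed. Hence the neighborhood of $V_i$ in $G_{i-1}$ is precisely $N(V_i)\setminus(V_1\cup\cdots\cup V_{i-1})$, whose cardinality is at most $|N(V_i)|\le d_{\max}$.

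With both conditions in place, \cref{lem:single-step} applied to $G_{i-1}$ and $C=V_i$ yields
\[
\tw(G_i)\le\max\bigl(\tw(G_{i-1}),\,|N_{G_{i-1}}(V_i)|-1\bigr)\le\max\bigl(\kappa,\,d_{\max}-1\bigr),
\]
where the second inequality uses the inductive hypothesis together with the bound on the neighborhood size derived above. This completes the induction and the proof. The only subtle point is the verification that the successive neighborhoods and connectivity remain controlled after the earlier deletion/clique-addition operations; once that bookkeeping is in place the argument reduces to a clean iteration of the single-step lemma.
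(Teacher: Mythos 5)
Your proposal is correct and follows essentially the same route as the paper: induction on the number of removal/clique-addition operations, with \cref{lem:single-step} applied at each step after checking that the component $V_i$ remains connected in $G_{i-1}$ and that its neighborhood there is contained in $N(V_i)$ (the paper even shows equality, using that distinct components of $G_S$ are non-adjacent). Your write-up is, if anything, slightly more careful than the paper's in explicitly verifying the connectivity hypothesis of the single-step lemma at each stage.
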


\begin{proof}
We prove by induction on the number $p$ of component removal and clique addition operations.
In the base case, we have $p=1$, and the proof follows immediately from~\cref{lem:single-step}. Next, by the induction hypothesis, suppose that after $p=k$ component removal and clique addition operations, we obtain a graph $G_k$ with $\tw(G_k) \leq \max(\kappa, d'-1)$, where $d' = \max_{i \in [k]}|N(V_i)|$. Let $G_k = (V',E')$. Consider the connected component $V_{k+1}$. Define
$$
N'(V_{k+1}) :=\{u \in V' \setminus V_{k+1}: \exists v\in V_{k+1}\text{ with }\{u,v\} \in E'\}.
$$
Since $V_{k+1} \cap N(V_i) = \emptyset$ for all $i \in [k]$ and no two nodes in $V_i$ and $V_j$ are adjacent for any $i \neq j \in [k+1]$, we deduce that
$$N'(V_{k+1}) = N(V_{k+1}).$$
Therefore, letting $C=V_{k+1}$ in~\cref{lem:single-step}, we deduce that $$\tw(G_{k+1}) \leq \max(\tw(G_k), |N(V_{k+1}|)) = \max(\kappa, d''-1),$$ where $d'' = \max_{i \in [k+1]}|N(V_i)|$, and this completes the proof.
\end{proof}

We are now ready to prove~\cref{th: poly}.

\paragraph{Proof of~\cref{th: poly}.} Since by property~(i) the graph $G$ does not have any connected-plus-triplets, we have $|V_c| \leq 2$ for all $c \in \C$. Therefore, by~\cref{naiveDecomp}, an extended formulation for $\QP(G)$ is given by putting together formulations for $\PP(G'_c)$, $c \in \C \cup \{0\}$, where for each $c \in \C$, the hypergraph $G'_c$ has one or two plus loops and the hypergraph $G'_0$ has no plus loops.
Consider some $c \in \C$. From the proof of~\cref{naiveDecomp} it follows that the hypergraph $G'_c$ has $|V_c|+|N(V_c)|$ nodes, where
$N(V_c)$ is defined by~\eqref{nghood}. Moreover, $|N(V_c)| \leq \sum_{v\in V_c}{\deg(v)}$, where $\deg(v)$ denotes the degree of node $v$. Since $|V_c| \leq 2$ for all $c \in \C$, from~\cref{convres} and~\cref{th:2p} it follows that an extended formulation for $\PP(G'_c)$ for any $c \in \C$ contains $O(2^{\deg(v_1)})$ variables and inequalities if $V_c = \{v_1\}$ and $O(2^{\deg(v_1)+\deg(v_2)})$ variables and inequalities if $V_c = \{v_1,v_2\}$.  Since by property~(iii), $\deg(v) \in O(\log |V|)$ for any $v \in V^+$, it follows that  $\PP(G'_c)$ admits a polynomial-size SDP-representable formulation for any $c \in \C$. Next, consider the hypergraph $G_0$ and denote by $H$ the intersection graph of $G_0$. Moreover, denote by $G'$ the intersection graph of the original hypergraph $G$. It then follows that $H$ can be obtained from $G'$ by performing $|\C|$ component removals and clique additions, where these operations are defined in the statement of~\cref{lem:single-step}.
Therefore, from~\cref{lemm:component-elimination} and properties~(i)-(iii) it follows that $\tw(G_0)=\tw(H) \in O(\log |V|)$. Therefore, by~\cref{th: alphaPoly}, the polytope $\PP(G_0)$ admits a polynomial-size linear extended formulation.
Finally, note that properties~(i)-(iii) can be checked in polynomial time and from the proof of~\cref{th:SDPrep} it follows that the above extended formulation can be constructed in polynomial time as well. Therefore, $\QP(G)$ has a polynomial-size SDP-representable formulation that can be constructed in polynomial time. \qed

\medskip

In~\cite{DeyIda25}, the authors present a sufficient condition under which $\QP(G)$ admits a polynomial-size SOC-representable formulation using a different proof technique. Their sufficient condition states that the graph $G$ admits a tree decomposition $(\X, T)$, with $\X=\{X_1, \cdots, X_m\}$ satisfying the following properties:
\begin{itemize}[leftmargin=1.0cm]
\item [(I)] for each bag $X_j \in \X$, there exists at most one node $v \in X_j$ such that $v \in V^+$.
\item [(II)] the width of the tree decomposition is bounded; \ie $\omega(\X) \in O(\log |V|)$,
\item [(III)] for each node $v \in V^+$, the \emph{spread} of node $v$ is bounded; \ie
$$s_v(\X):=\sum_{i \in [m]: X_i \ni v}(|X_i|-1) \in O(\log |V|).$$
\end{itemize}
By property~2 of a tree decomposition, condition~(I) implies that $V^+$ is a stable set of $G$. Again,
from property~2 of a tree decomposition it follows that for any tree decomposition $(\X,T)$ of $G$, we have:
$$
s_v(\X) \geq \deg(v), \quad \forall v \in V.
$$
Moreover, the authors of~\cite{DeyIda25} leave open the question of whether a tree decomposition of $G$ satisfying conditions~(I)-(III) can be constructed in polynomial time.
Thanks to~\cref{th: poly}, we obtain a stronger result on the SOC-representability of $\QP(G)$:

\begin{corollary}\label{cor: SOC}
    Consider a graph $G=(V,E,L)$. Suppose that the following conditions are satisfied:\medskip
    \begin{itemize}[leftmargin=1.0cm]
        \item [(i)] $V^+$ is a stable set of $G$.
        \item [(ii)]  The treewidth of $G$ is bounded; \ie $\tw(G) \in O(\log |V|)$.
        \item [(iii)] The degree of each node with a plus loop is bounded; \ie $\deg(v) \in O(\log |V|)$ for all $v \in V^+$, where $V^+$ is defined by~\eqref{vplus}.
    \end{itemize}
    \medskip
    Then $\QP(G)$ admits a polynomial-size SOC-representable formulation that can be constructed in polynomial time.
\end{corollary}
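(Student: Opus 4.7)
The plan is to mirror the proof of \cref{th: poly}, but exploit the stronger structural condition~(i) (that $V^+$ is a stable set) to upgrade SDP-representability to SOC-representability for the local pieces of the decomposition.

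First, I would invoke condition~(i): since $V^+$ is a stable set, every connected component $V_c$ of $G_{V^+}$ is a single node, \ie $|V_c|=1$ for all $c\in\C$. This is a strict specialization of the hypothesis of \cref{th:SDPrep}, so in particular $G$ has no connected-plus-triplets, and \cref{naiveDecomp} yields an extended formulation for $\QP(G)$ obtained by putting together formulations for $\PP(G'_v)$ for each $v\in V^+$ (where $G'_v$ has one plus loop and $\deg(v)+1$ nodes) together with a formulation for $\PP(G'_0)$ (which has no plus loops).

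Second, I would bound the size of each local piece. For each $v\in V^+$, condition~(iii) gives $\deg(v)\in O(\log|V|)$, so $G'_v$ has $O(\log|V|)$ nodes. Applying \cref{convres} and the argument recorded in \cref{pastRel} (which shows that for a single plus loop the convexification can be projected to an SOC-representable description in the original variables), one obtains a polynomial-size SOC-representable formulation for $\PP(G'_v)$. For the loopless piece $G'_0$, I would show $\tw(G'_0)\in O(\log|V|)$ by applying \cref{lemm:component-elimination} to the intersection graph $G'$ of $G$ with the sets $V_c$ (each a singleton $\{v\}$ with $v\in V^+$): here $d_{\max}=\max_{v\in V^+}\deg(v)\in O(\log|V|)$ by condition~(iii) and $\tw(G)\in O(\log|V|)$ by condition~(ii), so the lemma gives $\tw(G'_0)\le \max(\tw(G),d_{\max}-1)\in O(\log|V|)$. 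Then \cref{th: alphaPoly} provides a polynomial-size linear extended formulation for $\MP(G'_0)=\PP(G'_0)$, which is trivially SOC-representable.

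Finally, I would assemble the pieces: the union of a polynomial number of SOC-representable formulations, each of polynomial size, is itself a polynomial-size SOC-representable formulation for $\QP(G)$. Polynomial-time constructibility follows from three observations: conditions~(i)--(iii) can be verified in polynomial time (stable-set check is trivial; treewidth is $O(\log|V|)$ so an optimal tree decomposition can be computed in polynomial time by Bodlaender-type algorithms, and this is the regime already used in~\cref{th: alphaPoly}); the decomposition in \cref{naiveDecomp} and the construction in \cref{convres} are explicit and carried out in polynomial time under the size bounds above; and the linear extended formulation of $\MP(G'_0)$ can be constructed in polynomial time by \cref{th: alphaPoly}. The main subtlety, rather than an obstacle, will be ensuring that the SOC-representability from \cref{pastRel} is stated in a form directly usable here; this is essentially bookkeeping since the equivalence of system~\eqref{interSys} with the closed-form SOC inequalities has already been established in \cref{pastRel}.
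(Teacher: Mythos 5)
Your proposal is correct and follows essentially the same route as the paper: the corollary is obtained by specializing the proof of~\cref{th: poly} to the case where $V^+$ is a stable set, so that every component $V_c$ is a singleton, each local piece $\PP(G'_v)$ is SOC-representable of polynomial size by~\cref{convres} and~\cref{pastRel}, and the loopless piece is handled via~\cref{lemm:component-elimination} and~\cref{th: alphaPoly} exactly as in the theorem. The paper states the corollary without writing out these details, but your argument is precisely the intended one.
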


Notice that, given a graph $G=(V,E,L)$, one can check in polynomial time whether it satisfies conditions~(i)-(iii) of~\cref{cor: SOC}.

Recall that a graph is called \emph{series–parallel} if and only if it does not contain a $K_4$ minor, where $K_4$ denotes a complete graph with four nodes. Series-parallel graphs subsume several families of graphs such as trees and forests, cycles and cactus graphs, among others. The treewidth of a series-parallel graph is at most two. Thanks to~\cref{th: poly}, we obtain the following result for $\QP(G)$, where $G$ is a series-parallel graph.

\begin{corollary}
    Let $G=(V,E, L)$ be a series-parallel graph. Suppose that $G$ does not have any connected-plus-triplets and $\deg(v) \in O(\log |V|)$ for all $v \in V^+$, where $V^+$ is defined by~\eqref{vplus}.
    Then $\QP(G)$ admits a polynomial-size SDP-representable formulation that can be constructed in polynomial time.
\end{corollary}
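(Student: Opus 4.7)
The plan is to derive this corollary as an immediate consequence of \cref{th: poly}. I would simply verify that a series-parallel graph $G=(V,E,L)$ satisfying the stated hypotheses meets all three conditions of \cref{th: poly}.

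First, condition~(i) of \cref{th: poly} is assumed in the statement of the corollary (no connected-plus-triplets), and condition~(iii) (bounded degree for nodes with plus loops) is also assumed. The only step requiring any argument is condition~(ii): that $\tw(G) \in O(\log|V|)$. For this, I would invoke the well-known characterization of series-parallel graphs as exactly the $K_4$-minor-free graphs, together with the classical fact that any $K_4$-minor-free graph has treewidth at most $2$. In particular, $\tw(G) \leq 2$ is a constant, which is certainly $O(\log|V|)$. Since loops do not affect the treewidth (recall that the treewidth of a graph with loops is defined via the underlying loopless graph), this bound applies to $G=(V,E,L)$ as stated.

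Once the three conditions of \cref{th: poly} are verified, that theorem directly yields a polynomial-size SDP-representable formulation for $\QP(G)$ that can be constructed in polynomial time, which is exactly the claim. There is no real obstacle here; the corollary is purely a specialization of \cref{th: poly} to a structured graph class whose treewidth bound is classical. The only implicit subtlety worth flagging in the write-up is the reminder that the $K_4$-minor-free characterization, and hence the treewidth bound, is a property of the underlying simple graph and therefore transfers immediately to $G=(V,E,L)$.
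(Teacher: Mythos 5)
Your proposal is correct and matches the paper's (implicit) argument exactly: the paper states this corollary as an immediate specialization of \cref{th: poly}, noting just beforehand that series-parallel graphs are precisely the $K_4$-minor-free graphs and have treewidth at most two, so condition~(ii) holds trivially while conditions~(i) and~(iii) are assumed. No gaps.
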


We conclude this section by discussing the significance of the assumptions in~\cref{th: poly}. As noted above, assumption~(i) is likely necessary. In contrast, assumptions~(ii) and~(iii) appear to be  artifacts of our proof techniques. Relaxing assumption~(ii) would require extending~\cref{th:2p} from the case of a complete hypergraph with two plus loops to a complete hypergraph with a larger number of plus loops. Assumption~(iii) is currently needed to control the size of extended formulations for both $\PP(G'_c)$, $c \in \C$ and $\PP(G'_0)$.  Removing this assumption would require new tools and different proof techniques. We believe that the most natural direction to strengthen~\cref{th: poly} is to relax assumption~(ii);
specifically, replacing condition $|V_c| \leq 2$ for all $c \in \C$ by the weaker requirement $|V_c| \leq k$ for some fixed $k$ for all $c \in \C$. Finally, performing an extensive computational study with the proposed SDP relaxations for sparse box-constrained quadratic programs is a topic of future research.

\section{Connections with SDP hierarchies}
\label{sos}
The moment/Sums of Squares (SOS) hierarchy provides a sequence of SDP relaxations for polynomial optimization problems, based on the duality between truncated moment sequences and SOS-based positivity certificates~\cite{Las03,parrilo03}.  For polynomial optimization over a compact basic semialgebraic set satisfying an Archimedean condition, the sequence of SOS relaxations converges \emph{asymptotically} to the global optimum~\cite{las09}. In particular, when minimizing a polynomial over the unit hypercube , the Archimedean property holds automatically, ensuring asymptotic convergence of the hierarchy~\cite{las09}. Finite convergence at some fixed relaxation level is not guaranteed in general, but a result of Nie~\cite{Nie13} shows that it holds under certain conditions at the global minimizers, and therefore finite convergence is achieved~\emph{generically} (that is, finite convergence holds in the entire space of input data except on a set of Lebesgue measure zero).

In the case of binary polynomial optimization, it is well-known that each level of RLT is implied by the same level of Lasserre  relaxation~\cite{laurent03}. This implication relies crucially on the identity $x_i^2 = x_i$ for $x_i \in \{0,1\}$, which collapses higher-degree monomials and ensures that moment constraints encode all RLT inequalities. In contrast, for polynomial optimization over the unit hypercube, a given level of Lasserre hierarchy does not necessarily dominate the same level of RLT hierarchy. Since our proposed SDP relaxation~\eqref{psd2} with $r:=|P|+|M|$ implies level-$r$ RLT, we deduce that this relaxation is not implied by level-$r$ Lasserre relaxation either. However, as we describe next, our SDP relaxations can be obtained as a special case of a more general hierarchy, often referred to as the Schm\"udgen hierarchy~\cite{Sch91} or the Lasserre hierarchy with preordering in the literature.

We start by introducing some notation and terminology.
We write $\R[x]$ for the ring of real polynomials in the
$x$ variables. A polynomial $\sigma\in\R[x]$ is called a {SOS polynomial} if it can be written as
$$
\sigma(x)=\sum_{j=1}^m q_j(x)^2
$$
for some polynomials $q_j\in\R[x]$.
We denote by $\Sigma[x]$ the cone of all SOS polynomials in $\R[x]$.
The \emph{preordering} with \emph{square-free factors} associated with the unit hypercube is defined as:
$$
\mathcal{T}:
=
\Bigg\{
\sum_{\substack{J_1, J_2 \subseteq [n]: \\J_1 \cap J_2 = \emptyset}} \sigma_{J_1,J_2}(x)f(J_1, J_2): \sigma_{J_1,J_2}\in\Sigma[x], \; \forall J_1, J_2 \subseteq [n]: J_1 \cap J_2 = \emptyset\Bigg\},
$$
where $f(J_1, J_2)$ is the polynomial factor as defined by~\eqref{pfactor}.
Since the unit hypercube is a compact set, Schm\"udgen’s Positivstellensatz \cite{Sch91}
implies that every polynomial that is strictly positive on this set belongs to $\mathcal{T}$.
For a polynomial $p\in\R[x]$, let $\deg(p)$ denote its degree.
Fix an integer $d\ge 1$. The \emph{truncated preordering} at level $d$ is defined as
$$
\mathcal{T}_d
=
\Bigg\{
\sum_{\substack{J_1, J_2 \subseteq [n]: \\J_1 \cap J_2 = \emptyset}} \sigma_{J_1,J_2}(x) f(J_1,J_2):
\sigma_{J_1,J_2}\in\Sigma[x],\;
\deg\Big(\sigma_{J_1,J_2} f(J_1,J_2)\Big)\le d
\Bigg\}.
$$
If each multiplier $\sigma_{J_1, J_2}$ is restricted to be a nonnegative constant, then $\mathcal{T}_d$ yields exactly all RLT inequalities up to level-$d$.
Now, consider the problem of minimizing a polynomial function $g(x)$ over the unit hypercube. The associated Schm\"udgen-type lower bound is given by:
\[
\max\{\gamma\in\R : g(x)-\gamma\in\mathcal{T}_d\}.
\]
The moment/SOS duality framework introduced by Lasserre~\cite{Las03}
shows that the above problem can be equivalently expressed as an SDP in terms of truncated moment sequences.
Because the unit hypercube is compact, the bounds produced by this hierarchy converge
monotonically to the global minimum of $g(x)$ over $[0,1]^n$ as $d\to\infty$~\cite{Las03}.

We next show that for a quadratic objective function $g(x)$, our proposed SDP relaxations can be considered as a special case of Schm\"udgen hierarchy. Of course, the significance of our  relaxations is that under certain assumptions they are tight (see~\cref{th:SDPrep}), while no such a guarantee is available for the existing SDP hierarchies.
Let $G=(V,E,L)$ be a graph with $V^+$ as defined by~\eqref{vplus}.
By~\cref{naiveDecomp}, without loss of generality we assume that the subgraph $G_{V^+}$ of $G$ induced by $V^+$ is connected and that every node in $V \setminus V^+$ is connected to some node in $V^+$.  For a polynomial $p\in\R[x]$, let $\operatorname{supp}(p)\subseteq V$ denote the index set of variables on which $p$ depends nontrivially. We then consider the following restricted version of the  Schm\"udgen hierarchy:
\medskip
\begin{itemize}
\item Bounded SOS degree: each multiplier $\sigma_{J_1,J_2} \in\Sigma[x]$ is a sum of squares of affine polynomials; \ie
\begin{equation}\label{restr1}
\deg(\sigma_{J_1,J_2})\le 2.
\end{equation}
The intuition behind this restrictive assumption is that the objective function $g(x)$ is quadratic.

\item Disjoint support: for each $J_1, J_2 \subseteq V$ with $J_1 \cap J_2 = \emptyset$, the variables on which $\sigma_{J_1, J_2}$ depends correspond to some nodes of $G$ with plus loops and are disjoint from the variables in the product factors; \ie
\begin{equation}\label{restr2}
\operatorname{supp}(\sigma_{J_1,J_2}) \subseteq V^+\setminus (J_1\cup J_2).
\end{equation}
This assumption in particular implies that if $J_1 \cup J_2 \supseteq V^+$, then $\sigma_{J_1,J_2} $ is a nonnegative constant.
\end{itemize}
\medskip

Using assumptions~\eqref{restr1} and~\eqref{restr2}, for each $d \in \{1, \cdots, |V|\}$, we define the cone:
\begin{align}
\mathcal{T}^{\mathrm{disj}}_{d}
=
\Bigg\{
&\sum_{\substack{J_1,J_2\subseteq V\\ J_1\cap J_2=\emptyset}}
\sigma_{J_1,J_2}(x) f(J_1, J_2)
:
\sigma_{J_1,J_2}\in\Sigma[x],\;
\deg(\sigma_{J_1,J_2})\le 2,\;
\operatorname{supp}(\sigma_{J_1,J_2})\subseteq V^+ \setminus (J_1 \cup J_2), \nonumber\\
& \deg(\sigma_{J_1, J_2} f(J_1, J_2)) \leq d
\Bigg\}.
\end{align}

We then obtain the following lower bound on the minimum value of a quadratic function $g(x)$ over the unit hypercube:
\begin{equation}\label{primal}
\max\{\gamma\in\R : g(x)-\gamma\in\mathcal{T}^{\mathrm{disj}}_{d}\}.
\end{equation}
We now obtain the dual of Problem~\eqref{primal} and relate it to our SDP relaxations. For notational simplicity, define  $T_{J_1, J_2}:=\operatorname{supp}(\sigma_{J_1,J_2})$. Let $v_{T_{J_1,J_2}}(x)=(1,x_i\,:\,i\in T_{J_1,J_2})^\top$.
Let $y=(y_\alpha)_{|\alpha|\le d}$ be a truncated moment sequence of order $d$ with $y_0 = 1$. For any polynomial $p$ with monomial expansion
$p(x)=\sum_{|\alpha|\le d} p_\alpha x^\alpha$,
define the associated linear functional
\[
L_y(p):=\sum_{|\alpha|\le d} p_\alpha y_\alpha .
\]
Since $\deg(\sigma_{J_1,J_2})\le2$, each multiplier admits a Gram representation:
$$
\sigma_{J_1,J_2}(x)
=
v_{T_{J_1,J_2}}(x)^\top
Q_{J_1,J_2, T_{J_1,J_2}}
v_{T_{J_1,J_2}}(x),
$$
where $Q_{J_1,J_2, T_{J_1,J_2}}\succeq 0$ and is of size $(1+|T_{J_1,J_2}|)\times (1+|T_{J_1,J_2}|)$.
Applying $L_y$ to the constraint set of Problem~\eqref{primal} yields
\[
L_y(g)-\gamma
=
\sum_{J_1,J_2}
\operatorname{tr}\left(
Q_{J_1,J_2, T_{J_1, J_2}}\,
L_y\Big(
f(J_1,J_2)\,
v_{T_{J_1,J_2}} v_{T_{J_1,J_2}}^\top
\Big)
\right).
\]
In the following, we refer to any triple $(J_1, J_2, T_{J_1, J_2})$ satisfying $J_1, J_2 \subseteq V$, $J_1 \cap J_2 = \emptyset$, $T_{J_1,J_2} \subseteq V^+ \setminus (J_1 \cup J_2)$, and $|J_1| + |J_2| + |T_{J_1, J_2}| \leq d$, as an \emph{admissible} triple.
For any admissible $(J_1,J_2,T_{J_1,J_2})$, define the \emph{localizing moment matrix} as
$$
M^{J_1,J_2}_{T_{J_1,J_2}}(y)
:=
L_y\big(
f(J_1,J_2)\,
v_{T_{J_1,J_2}} v_{T_{J_1,J_2}}^\top
\big).
$$
From the definitions~\eqref{rlteq} and~\eqref{newrels}, it follows that:
$$
\ell(J_1, J_2) = L_y(f(J_1,J_2)), \quad \rho(i,J_1,J_2) = L_y(x^2_if(J_1,J_2)).
$$
Therefore, letting $T_{J_1, J_2} = \{i_1, \cdots i_p\}$, we deduce that the localizing moment matrix $M^{J_1,J_2}_{T_{J_1,J_2}}(y)$ can be equivalently written as:
$$
M^{J_1,J_2}_{T_{J_1,J_2}}(y)
=
\begin{pmatrix}
\ell(J_1,J_2) & \ell(J_1\cup\{i_1\},J_2) & \cdots & \ell(J_1\cup\{i_p\},J_2) \\[6pt]
\ell(J_1\cup\{i_1\},J_2) & \rho(i_1,J_1,J_2) & \cdots & \ell(J_1\cup\{i_1,i_p\},J_2) \\[6pt]
\vdots & \vdots & \ddots & \vdots \\[6pt]
\ell(J_1\cup\{i_p\},J_2) & \ell(J_1\cup\{i_1,i_p\},J_2)&\cdots& \rho(i_p,J_1,J_2)
\end{pmatrix}.
$$
By self duality of the positive semidefinite cone, dualizing the constraints $Q_{J_1,J_2, T_{J_1,J_2}}\succeq 0$ we obtain $M^{J_1,J_2}_{T_{J_1,J_2}}(y)\succeq0$  for all admissible $(J_1, J_2, T_{J_1, J_2})$. It then follows that the dual of Problem~\eqref{primal} is the following SDP:
\begin{equation}\label{dual}
\begin{aligned}
\min_{y=(y_\alpha)_{|\alpha|\le d}}\quad & L_y(g)\\
\text{s.t.}\quad
& M^{J_1,J_2}_{T_{J_1, J_2}}(y)\succeq0
\quad\text{for all admissible}\; (J_1,J_2, T_{J_1, J_2}).
\end{aligned}
\end{equation}
Finally, letting $R:=T_{J_1, J_2}$, $J=J_1$, and $M_R = J_1 \cup J_2$, and invoking~\cref{pmdominance}, we deduce the nonredundant LMIs defining the feasible set of Problem~\eqref{dual} coincide with the system of LMIs defined by~\eqref{psd2}.
From~\cref{th:2p}, it then follows that for a quadratic objective function $g$ with $|V^+|=2$, if we let $d=|V|$, then the lower bound given by Problem~\eqref{dual} is sharp. We leave it as an open question whether a similar result can be obtained for $|V^+| > 2$.

\bigskip
\noindent
\textbf{Acknowledgments:} The author thanks Kurt Anstreicher for insightful discussions and for providing difficult instances of Problem~\ref{pQP}. These instances were used to create the numerical examples in~\cref{example1p} and~\cref{exampl2}.

\bigskip
\noindent
\textbf{Funding:}
A. Khajavirad is supported in part by AFOSR grant FA9550-23-1-0123 and ONR grant N00014-25-1-2491.
Any opinions, findings, conclusions, or recommendations expressed in this material are those of the author and do not necessarily reflect the views of the Air Force Office of Scientific Research or the Office of Naval Research.

\bibliographystyle{plain}
\bibliography{biblio}

@article{Laurent09,
author  = "M. Laurent",
title   = "{Sums of squares, moment matrices and optimization over polynomials}",
journal = "{\em In M. Putinar and S. Sullivant (eds.)},
           Emerging applications of algebraic geometry, The IMA Volumes in Mathematics and its Applications, vol 149
           {\em Springer, New York, NY}",
year    = "2009",
pages   = "157--270",
}

@article{Burer09,
  title={On the copositive representation of binary and continuous nonconvex quadratic programs},
  author={Burer, S.},
  journal={Mathematical Programming},
  volume={120},
  number={2},
  pages={479--495},
  year={2009},
  publisher={Springer}
}

@article{DeyIda25,
  title={A second-order cone representable class of nonconvex quadratic programs},
  author={Dey, S. S. and Khajavirad, A.},
  journal={arXiv:2508.18435},
  year={2025}
}

@techreport{WaiJor04,
	author = {Wainwright, M.J. and Jordan, M.I.},
	date-added = {2020-02-04 14:34:30 -0600},
	date-modified = {2020-02-04 14:35:40 -0600},
	institution = {University of California},
	number = {671},
	title = {Treewidth-Based Conditions for Exactness of the {S}herali-{A}dams and {L}asserre Relaxations},
	year = {2004}}

@article{dPKha23mMPA,
	author = {Del Pia, A. and Khajavirad, A.},
	title = {A polynomial-size extended formulation for the multilinear polytope of beta-acyclic hypergraphs},
	journal={Mathematical Programming Series A},
        pages={1--33},
        year={2023},
        publisher={Springer}
        }

@article{BieMun18,
	author = {Bienstock, D. and Mu\~noz, G.},
	date-added = {2022-04-19 16:00:56 -0500},
	date-modified = {2023-10-06 16:58:20 -0500},
	journal = {SIAM Journal on Optimization},
	number = {2},
	pages = {1121--1150},
	title = {{LP} Formulations for Polynomial Optimization Problems},
	volume = {28},
	year = {2018}}

@article{McC76,
	author = {McCormick, G.P.},
	date-added = {2017-06-18 18:19:32 +0000},
	date-modified = {2018-06-28 22:39:43 +0000},
	journal = {Mathematical Programming},
	pages = {147--175},
	title = {Computability of global solutions to factorable nonconvex programs: Part {I}: convex underestimating problems},
	volume = {10},
	year = {1976}}

@article{dPKha18SIOPT,
	author = {Del Pia, A. and Khajavirad, A.},
	date-added = {2016-10-12 19:23:06 +0000},
	date-modified = {2021-04-22 14:26:37 -0500},
	journal = {SIAM Journal on Optimization},
	number = {2},
	pages = {1049--1076},
	title = {The multilinear polytope for acyclic hypergraphs},
	volume = {28},
	year = {2018}}

@article{dPKha18MPA,
	author = {Del Pia, A. and Khajavirad, A.},
	date-added = {2016-07-08 20:47:33 +0000},
	date-modified = {2018-07-12 15:54:53 +0000},
	journal = {Mathematical Programming, Series A},
	number = {2},
	pages = {387--415},
	title = {On decomposability of multilinear sets},
	volume = {170},
	year = {2018}}

@article{dPKha17MOR,
	author = {Del Pia, A. and Khajavirad, A.},
	date-added = {2016-01-28 00:12:15 +0000},
	date-modified = {2022-04-20 11:32:17 -0500},
	journal = {Mathematics of Operations Research},
	number = {2},
	pages = {389--410},
	title = {A polyhedral study of binary polynomial programs},
	volume = {42},
	year = {2017}}

@article{YajFuj98,
	author = {Yajima, Y. and Fujie, T.},
	date-added = {2016-01-27 18:14:00 +0000},
	date-modified = {2017-07-07 16:59:55 +0000},
	journal = {Journal of Global Optimization},
	number = {2},
	pages = {151--170},
	title = {A polyhedral approach for nonconvex quadratic programming problems with box constraints},
	volume = {13},
	year = {1998}}

@article{SheAda90,
	author = {H.D. Sherali and W.P. Adams},
	date-added = {2016-01-27 18:10:48 +0000},
	date-modified = {2017-07-07 16:58:29 +0000},
	journal = {SIAM Journal of Discrete Mathematics},
	number = {3},
	pages = {411--430},
	title = {A hierarchy of relaxations between the continuous and convex hull representations for zero-one programming problems},
	volume = {3},
	year = {1990}}

@article{Las03,
	author = {Lasserre, J.B.},
	date-added = {2016-01-27 18:05:14 +0000},
	date-modified = {2017-07-07 16:54:59 +0000},
	journal = {SIAM Journal on Optimization},
	number = {3},
	pages = {796--817},
	title = {Global optimization with polynomials and the problem of moments},
	volume = {96},
	year = {2003}}

@book{SheAda99,
	author = {Sherali, H.D. and Adams, W.P.},
	date-added = {2015-07-02 17:22:38 +0000},
	date-modified = {2015-07-02 17:26:16 +0000},
	publisher = {Springer US},
	series = {Nonconvex Optimization and its Applications},
	title = {A Reformulation-Linearization Technique for Solving Discrete and Continuous Nonconvex Problems},
	volume = {31},
	year = {1999}}

@book{DezLau97,
	address = {Berlin Heidelberg New York},
	author = {Deza, M.M. and Laurent, M.},
	date-added = {2015-07-02 16:45:21 +0000},
	date-modified = {2015-12-08 22:08:25 +0000},
	publisher = {Springer-Verlag},
	series = {Algorithms and Combinatorics},
	title = {Geometry of cuts and metrics},
	volume = {15},
	year = {1997}}

@article{Pad89,
	author = {Padberg, M.},
	date-added = {2014-05-23 14:18:23 +0000},
	date-modified = {2017-07-07 17:25:16 +0000},
	journal = {Mathematical Programming},
	number = {1--3},
	pages = {139--172},
	title = {The {B}oolean Quadric Polytope: Some Characteristics, Facets And Relatives},
	volume = {45},
	year = {1989}}

@article{Bal85,
	author = {Balas, E.},
	journal = {SIAM Journal on Algebraic and Discrete Methods},
	pages = {466--486},
	title = {Disjunctive programming and a hierarchy of relaxations for discrete optimization problems},
	volume = {6},
	year = {1985}}

@book{Roc70,
	address = {Princeton},
	author = {Rockafellar, R.T.},
	publisher = {Princeton University Press},
	title = {Convex Analysis},
	year = {1970}}

@article{parrilo03,
  title={Semidefinite programming relaxations for semialgebraic problems},
  author={Parrilo, P. A.},
  journal={Mathematical programming},
  volume={96},
  pages={293--320},
  year={2003},
  publisher={Springer}
}

@article{laurent03,
  title={A comparison of the {S}herali-{A}dams, {L}ov{\'a}sz-{S}chrijver, and {L}asserre relaxations for 0--1 programming},
  author={Laurent, M.},
  journal={Mathematics of Operations Research},
  volume={28},
  number={3},
  pages={470--496},
  year={2003},
  publisher={INFORMS}
}

@article{AnsBur10,
  title={Computable representations for convex hulls of low-dimensional quadratic forms},
  author={Anstreicher, K.M. and Burer, S.},
  journal={Mathematical Programming},
  volume={124},
  pages={33–-43},
  year={2010}
}

@article{BurLet09,
author = {Burer, S. and Letchford, A. N.},
title = {On Nonconvex Quadratic Programming with Box Constraints},
journal = {SIAM Journal on Optimization},
volume = {20},
number = {2},
pages = {1073-1089},
year = {2009},
doi = {10.1137/080729529},
}

@article{AboFio19,
  title={Extension complexity of the correlation polytope},
  author={Aboulker, P. and Fiorini, S. and Huynh, T. and Macchia, M. and Seif, J.},
  journal={Operations Research Letters},
  volume={47},
  number={1},
  pages={47--51},
  year={2019},
  publisher={Elsevier}
}

@article{ChekChu16,
  title={Polynomial bounds for the grid-minor theorem},
  author={Chekuri, C. and Chuzhoy, J.},
  journal={Journal of the ACM (JACM)},
  volume={63},
  number={5},
  pages={1--65},
  year={2016},
  publisher={ACM New York, NY, USA}
}

@article{AnsPug25,
  title={Extended Triangle Inequalities for Nonconvex Box-Constrained Quadratic Programming},
  author={Anstreicher, K. M. and Puges, D.},
  journal={arXiv:2501.09150},
  year={2025}
}

@article{KolKou15,
  title={Extended Formulation for {CSP} that is Compact for Instances of Bounded Treewidth},
  author={Kolman, P. and Kouteck{\`y}, M.},
  journal={The Electronic Journal of Combinatorics},
  pages={P4--30},
  year={2015}
}

@article{SheTun95,
  title={A reformulation-convexification approach for solving nonconvex quadratic programming problems},
  author={Sherali, H. D. and Tuncbilek, C. H.},
  journal={Journal of Global Optimization},
  volume={7},
  number={1},
  pages={1--31},
  year={1995},
  publisher={Springer}
}

@article{Hertog21,
  title={A novel algorithm for a broad class of nonconvex optimization problems},
  author={Bertsimas, D. and de Moor, D. and den Hertog, D. and Koukouvinos, T. and Zhen, J.},
  journal={Optimization Online},
  year={2023}
}

@article{AntoAida24,
  title={Explicit convex hull description of bivariate quadratic sets with indicator variables},
  author={De Rosa, A. and Khajavirad, A.},
  journal={Mathematical Programming},
  pages={1--43},
  year={2024},
  publisher={Springer}
}

@article{Ans12,
  title={On convex relaxations for quadratically constrained quadratic programming},
  author={Anstreicher, K. M.},
  journal={Mathematical programming},
  volume={136},
  number={2},
  pages={233--251},
  year={2012},
  publisher={Springer}
}

@article{Shor87,
  title={Quadratic optimization problems},
  author={Shor, N. Z.},
  journal={Soviet Journal of Computer and Systems Sciences},
  volume={25},
  pages={1--11},
  year={1987}
}

@article{dPKha25,
  title={The complete edge relaxation for binary polynomial optimization},
  author={Del Pia, A. and Khajavirad, A.},
  journal={arXiv:2507.12831},
  year={2025}
}

@article{anstreicher17,
  title={Kronecker product constraints with an application to the two-trust-region subproblem},
  author={Anstreicher, K. M.},
  journal={SIAM Journal on Optimization},
  volume={27},
  number={1},
  pages={368--378},
  year={2017},
  publisher={SIAM}
}

@book{las09,
  title={Moments, positive polynomials and their applications},
  author={Lasserre, J. B.},
  volume={1},
  year={2009},
  publisher={World Scientific}
}

@article{Nie13,
  title={An exact {J}acobian {SDP} relaxation for polynomial optimization},
  author={Nie, J.},
  journal={Mathematical Programming},
  volume={137},
  number={1},
  pages={225--255},
  year={2013},
  publisher={Springer}
}

@article{Sch91,
  author  = {Schm{\"u}dgen, K.},
  title   = {The {K}-moment problem for compact semi-algebraic sets},
  journal = {Mathematische Annalen},
  year    = {1991},
  volume  = {289},
  number  = {2},
  pages   = {203--206}
}

\end{document}